\theoremstyle{plain}
\newtheorem{theorem}{Theorem}
\newtheorem{lemma}{Lemma}
\newtheorem{corollary}{Corollary}
\newtheorem{proposition}{Proposition}
\theoremstyle{remark}
\newtheorem{definition}{Definition}
\newcommand{\N}{\mathbb{N}}
\newcommand{\R}{\mathbb{R}}
\newcommand{\bigO}[1]{\mathcal{O}\left( #1 \right)}							
\newcommand \dd[1]  { \,\textrm d{#1}}   
\newcommand*{\dpar}{\mathop{}\!\partial}									
\newcommand{\E}[1]{\mathbb{E} \left[ #1 \right]}							
\newcommand{\e}{\textrm{e}}											
\newcommand{\indic}{\mathds{1}}										
\newcommand{\Pro}[1]{\mathbb{P}\! \left( #1 \right)}							
\newcommand{\relmiddle}[1]{\mathrel{}\middle#1\mathrel{}}					
\newcommand{\con}{\relmiddle\vert}
\newcommand{\smallo}[1]{o\left( #1 \right)}								
\newcommand{\calL}{\mathcal{L}}
\newcommand{\bzero}{\boldsymbol{0}}
\newcommand\ind{\protect\mathpalette{\protect\independenT}{\perp}}
\def\independenT#1#2{\mathrel{\rlap{$#1#2$}\mkern2mu{#1#2}}}			
\newcommand{\omr}{1-\rho^2}
\newcommand{\sqln}{\sqrt{\frac{2(1-\rho^2)}{\log n}}}
\newcommand{\lln}{\log\left(4\pi\log n\right)}
\newcommand{\llln}{\frac{\log\left(4\pi\log n\right)}{\log n}}
\newcommand{\tlan}{\tilde{a}_n}
\newcommand{\tlbn}{\tilde{b}_n}
\newcommand{\nto}{\underset{n\to\infty}{\to}}
\newcommand{\simn}{\underset{n\to\infty}{\sim}}
\begin{document}

\begin{frontmatter}
\title{On the relation between extremal dependence and concomitants}

\begin{aug}

\author[A]{\fnms{Amir}~\snm{Khorrami Chokami}\ead[label=e1]{amir.khorramichokami@unito.it}},
\author[B]{\fnms{Marie}~\snm{Kratz}\ead[label=e2]{kratz@essec.edu}}
\address[A]{ESOMAS Department, University of Torino, Torino, Italy\printead[presep={,\ }]{e1}}

\address[B]{ESSEC Business School, CREAR,  France\printead[presep={,\ }]{e2}}
\end{aug}

\begin{abstract}
    The study of concomitants has recently met a renewed interest due to its applications in selection procedures. For instance, concomitants are used in ranked-set sampling, to achieve efficiency and reduce cost when compared to the simple random sampling. 
    In parallel, the search for new methods to provide a rich description of extremal dependence among multiple time series has rapidly grown, due also to its numerous practical implications and the lack of suitable models to assess it. 
    Here, our aim is to investigate extremal dependence when choosing the concomitants approach. In this study, we show how the extremal dependence of a vector $(X, Y)$ impacts the asymptotic behavior of the maxima over subsets of concomitants. Furthermore, discussing the various conditions and results, we investigate how transformations of the marginal distributions of $X$ and $Y$ influence the degeneracy of the limit.
\end{abstract}

\begin{keyword}[class=MSC]
\kwd{60G70}
\kwd{62E20}
\kwd{62G30}
\end{keyword}

\begin{keyword}
\kwd{Asymptotic Theorems}
\kwd{Concomitants}
\kwd{Copula}
\kwd{(Tail) Dependence}
\kwd{Extremes}
\kwd{Gaussian}
\kwd{Logistic}
\kwd{Maxima}
\kwd{Order Statistics}
\kwd{Pareto}
\kwd{Slowly/Regularly Varying Functions}
\kwd{Tail Equivalence}
\kwd{Weak Convergence}
\end{keyword}

\end{frontmatter}


\section{Introduction} \label{sec:intro}
Let $(X_i,Y_i),\,i=1,\dots,n$, be a sequence of independent and identically distributed (i.i.d.) bivariate random vectors with common cumulative distribution (cdf) function $F(x,y)$. We define the \emph{concomitants of order statistics} as follows. Arrange the $X$ variates in ascending order, thus obtaining the order statistics $X_{(1)}\leq X_{(2)}\leq\dots\leq X_{(n)}$ for the $X$ variable. Then, the $Y$ variable corresponding to the $i$-th order statistic $X_{(i)}$ is called the concomitant of the $i$-th order statistic, and is denoted by $Y_{[i]}$. For example, if the maximum of the $X$ variates is $X_{(n)}\equiv X_3$, then its concomitant is $Y_{[n]}\equiv Y_3$.  
Concomitants of order statistics are of notable interest in practical applications: It is worth mentioning their utility in problems concerning the estimation of parameters for multivariate datasets affected by type II censoring, in the development of selection procedures (where it is more convenient to measure a characteristic linked to a measure of interest), in the setting of ranked-set sampling schemes. In particular, the latter deals with situations where the measurement of the variable of interest is time-consuming or expensive, but the ranking of a set of observations linked to the target variable is easy to be performed. For such cases, ranked-set sampling is a better alternative with respect to the simple random sampling, both in terms of efficiency and reduced cost, hence this renewed  attention on the study of concomitants. We report the seminal works \cite{DellClutter1972}, \cite{StokesSager1988}, and more recently \cite{WangStokes2006}, \cite{Shao2010}, \cite{Balci2013}, \cite{Wang2016}, \cite{Wang2017}, \cite{Zamanzade2018}, \cite{Ozturk2019} among others.

Our motivation is to investigate extremal dependence when choosing the concomitant approach. Looking at the literature, this topic has been tackled in the 90's. One can mention the seminal paper \cite{Nagaraja1994} where, given a number $k$, they study the distribution of the maximum of the concomitants $V_1=\max\left( Y_{[n]},\dots,Y_{[n-k+1]}\right)$.
Finite and asymptotic results are given in both cases $k$ a fixed integer (\emph{extreme case}) and $k=[np],\,p\in (0,1)$ (\emph{quantile case}). This leads to the natural question of how much $V_1$ is close to the maximum $Y_{(n)}$, a fundamental problem in practical applications, tackled in \cite{Joshi1995}. In their work, authors propose to link this question to the study of the joint distribution of two maxima of concomitants. More precisely,
given an i.i.d.~sequence of bivariate random vectors with parent random vector $(X,Y)$, define, for $k\geq 1$,
\begin{equation}\label{eq: maxima of conc}
    V_1=\max\left( Y_{[n]},\dots,Y_{[n-k+1]}\right)\quad \text{and} \quad V_2=\max\left( Y_{[n-k]},\dots,Y_{[1]}\right).
\end{equation}
\cite{Joshi1995} introduced the random variable~$W_k:=V_1/Y_{(n)}$ and proved that its cdf~satisfies 
\[
\Pro{W_k\leq w} = \Pro{V_1\leq wV_2,\,V_2>0},
\]
for which they need to compute the joint distribution of $(V_1,V_2)$.
%

Before briefly reporting their result, let us introduce some notations that will be used throughout the paper.
\\[1ex]
Let $G(z; \mu,\sigma,\xi)$, for $\mu,\xi\in\R,\sigma>0$, denote a Generalised Extreme Value distribution (GEV). A distribution
function $F$ is said to belong to the \emph{Maximum Domain of Attraction} (MDA) of $G$, written as $F\in\mathscr{D}(G)$, if there exist normalizing sequences of constants $a_n>0$ and $b_n\in\R$ such that
$F^n(a_n z + b_n) \to G(z)$, as $n\to\infty$ for any $z\in\Gamma_G=\{z\colon \xi(z - \mu)/\sigma>0\}$. The structure of a MDA is studied in \cite{FisherTippett1928}, \cite{Gnedenko1943}, among others, and more recently, with another view, in \cite{Leonetti2022}. Sufficient conditions for a distribution $F$ to belong to the MDA of a certain GEV $G$ are the so called Von Mises conditions: We refer to \cite[Propositions 1.15 - 1.17]{Resnick1987} for a thorough description and provide a brief recall in Appendix~\ref{app:def} for the paper to be self-contained.
\\[1ex]
A measurable function $h\colon\R_+ \mapsto \R_+$ is regularly varying at infinity with index $a$ (written $h\in\text{RV}_a$) if for $x>0$
\[
    \lim_{t\to\infty}\frac{h(tx)}{h(t)}=x^a.
\]
The index $a$ is called the \emph{exponent of variation}. When $a$ is $0$, $h$ is said \emph{slowly varying} (SV).\\[1ex]
The \emph{upper-tail dependence coefficient}
of a random vector $(X,Y)$ with copula $C$ and marginal distributions $F_X$ and $F_Y$, respectively, is defined as
(\cite{Coles_Heffernan1999})
\begin{equation}\label{eq: UTDC_def}
\lambda_u:=\lim_{\alpha\to 1}\Pro{Y>F_Y^{-1}(\alpha)\mid X>F_X^{-1}(\alpha)} = \lim_{\alpha\to 1}\frac{1-2\alpha-C(\alpha,\alpha)}{1-\alpha},
\end{equation}
$F^{-1}$ denoting the generalized inverse function of a cdf $F$.\\
Note that $\lambda_u\in[0,1]$, where $\lambda_u=0$ means extremal independence between $X$ and $Y$.\\[1ex]
Let us also introduce the following notation, borrowed from \cite{Joshi1995}. Given a vector $(X,Y)$, we define
\begin{align}
& F_1(y\mid x):=\Pro{Y\leq y\mid X>x}, \label{def-F1}\\
& F_2(y\mid x):=\Pro{Y\leq y\mid X\leq x}, \label{def-F2}\\
& F_3(y\mid x):=\Pro{Y\leq y\mid X=x}.  \label{def-F3}
\end{align}
The conditional distributions $F_1,\,F_2$ and $F_3$ play a fundamental role in computing the distributions of the concomitants of the order statistics as well as the distributions of their transformations.
\\[1ex]

\textit{Theorem 2 in \cite{Joshi1995}: Suppose $F_X$ satisfies one of the Von Mises conditions and, for all $x$ and $y$, assume $F_X\in\mathscr{D}(G_X)$, $F_Y\in\mathscr{D}(G_Y)$, and 
\begin{equation}\label{eq: JN95_Theorem_2_joint_extreme_indep_XY}
n\Pro{X>a_nx+b_n,Y>c_ny+d_n}\nto 0,\quad\forall x,y\in\R,
\end{equation}
where the norming constants ${(a_n)}_n,\,{(b_n)}_n,\,{(c_n)}_n\text{ and }{(d_n)}_n$ are such that 
\begin{equation}\label{eq: F3 to 1}
    F_3(c_ny+d_n\mid a_nx+b_n)\to 1 \qquad \text{ as }n\to\infty.
\end{equation}
Further, suppose there exist constants $\tilde{a}_n>0$ and $\tilde{b}_n\in\R$ such that
\begin{equation}\label{eq: JN95_Theorem_2_F1_limit}
F_1(\tilde{a}_n y+\tilde{b}_n \mid a_n x+b_n)\underset{n\to\infty}{\to} H_1(x,y).
\end{equation}
Then, the cdf~of $(V_1,V_2)$ satisfies
\begin{equation}\label{eq: JN95_Theorem_2_joint_V1V2}
F_{(V_1,V_2)}(\tilde{a}_n v_1+\tilde{b}_n,\,c_n v_2+d_n)\nto H(v_1)G_Y(v_2),\quad\forall v_1,v_2\in\R,
\end{equation}
where
$H(v_1)={(k!)}^{-1}\int H_1^k(x,v_1){\left(-\log G_X(x)\right)}^kg_X(x)\dd{x}$.
}\\[1ex]

\noindent We begin our analysis by focusing on Condition \eqref{eq: F3 to 1}. We first show in Lemma~\ref{lemma: nP to 0 equiv F_3 to 1} the relation between \eqref{eq: JN95_Theorem_2_joint_extreme_indep_XY} and \eqref{eq: F3 to 1}, which proof is given in Appendix~\ref{app: proofs}.

\begin{lemma}\label{lemma: nP to 0 equiv F_3 to 1}
Let $x,y\in\R$. Then, we have
\begin{align}\label{eq: equivalence_JN_conditions}
& n\Pro{X>a_nx+b_n,Y>c_ny+d_n} \underset{n\to\infty}{\to} 0  \nonumber\\
&\Leftrightarrow\quad F_3\left(c_ny+d_n\mid a_nz+b_n\right) \underset{n\to\infty}{\to} 1,\;\forall z\geq x.
\end{align}
\end{lemma}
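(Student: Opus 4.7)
The backbone of the argument is the identity (obtained by conditioning on $X$), valid for every $z\in\R$,
\[
\Pro{X>a_n z+b_n,\,Y>c_n y+d_n}=\int_z^{\infty}\bigl[1-F_3(c_n y+d_n\mid a_n s+b_n)\bigr]\,dF_X(a_n s+b_n),\qquad(\star)
\]
together with the auxiliary factorisation $\Pro{X>a_n z+b_n,\,Y>v_n}=\Pro{X>a_n z+b_n}\bigl(1-F_1(v_n\mid a_n z+b_n)\bigr)$, writing $u_n:=a_n x+b_n$ and $v_n:=c_n y+d_n$. Since $F_X\in\mathscr{D}(G_X)$, $n(1-F_X(a_n z+b_n))\nto\tau(z):=-\log G_X(z)$, which is positive and finite for $z$ in the interior of the upper support of $G_X$; consequently the positive measure $\nu_n(ds):=n\,dF_X(a_n s+b_n)$ on $[x,\infty)$ is finite with total mass $\to\tau(x)$ and converges weakly to $-d\tau$.

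For the direction $(\Leftarrow)$, evaluate $(\star)$ at $z=x$ and multiply by $n$ to get
\[
n\Pro{X>u_n,\,Y>v_n}=\int_x^{\infty}\bigl[1-F_3(v_n\mid a_n s+b_n)\bigr]\,d\nu_n(s).
\]
Under the hypothesis the integrand is bounded by $1$ and vanishes pointwise in $s\geq x$. A cutoff at some $M>x$ chosen so that $\tau(M)<\varepsilon$ controls the tail via $\nu_n((M,\infty))\to\tau(M)$, while on the compact piece $[x,M]$ bounded convergence (using the continuity of $F_3(v_n\mid \cdot)$ inherited from the joint distribution) together with the weak convergence $\nu_n\to-d\tau$ gives vanishing of the main part. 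This yields the LHS.

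For the direction $(\Rightarrow)$, monotonicity in $z$ of $\Pro{X>a_n z+b_n,\,Y>v_n}$ extends the LHS condition to every $z\geq x$; combined with the factorisation and $n\Pro{X>a_n z+b_n}\to\tau(z)>0$, this gives $F_1(v_n\mid a_n z+b_n)\to 1$ for every such $z$. To upgrade this averaged statement to a pointwise $F_3$-condition, apply $(\star)$ at two levels $x\leq z_1<z_2$:
\[
\int_{z_1}^{z_2}\bigl[1-F_3(v_n\mid a_n s+b_n)\bigr]\,d\nu_n(s)=n\Pro{X>a_n z_1+b_n,\,Y>v_n}-n\Pro{X>a_n z_2+b_n,\,Y>v_n}\nto 0,
\]
while the non-negative integrand satisfies $\nu_n([z_1,z_2])\to\tau(z_1)-\tau(z_2)>0$. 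Dividing by $\nu_n([z_1,z_2])$ and then letting $z_2\downarrow z_1$ (a Lebesgue-differentiation step) extracts $F_3(v_n\mid a_n z_1+b_n)\to 1$ for every $z_1\geq x$.

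The main obstacle is precisely this final differentiation step in $(\Rightarrow)$: passing from an integrated decay against $\nu_n$ to a pointwise-in-$z$ decay of the $F_3$-values. This is where the structural hypotheses on $F_X$ (of Von Mises type) and the implicit continuity of $F_3(\,\cdot\mid\cdot\,)$ in its conditioning argument do real work — they guarantee that $\nu_n$ places uniform positive mass around every interior $z$ and that $F_3(v_n\mid a_n\cdot+b_n)$ is regular enough for the differentiation to recover the pointwise limit; without such regularity the averaged and pointwise conditions need not coincide.
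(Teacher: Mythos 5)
Your backbone identity $(\star)$ and your treatment of the $(\Leftarrow)$ direction (cut off the tail where $-\log G_X$ is small, then pass to the limit on the compact piece against the weakly convergent measures $\nu_n$) match the paper's argument, which runs the whole lemma through the extended dominated convergence theorem after verifying that the total masses $n(1-F_X(a_nx+b_n))$ converge to $\int g(z)\indic(z\geq x)\,dz$.

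The $(\Rightarrow)$ direction, however, contains a genuine gap exactly where you flag "the main obstacle." After you take $n\to\infty$ in
\[
\frac{1}{\nu_n([z_1,z_2])}\int_{z_1}^{z_2}\bigl[1-F_3(v_n\mid a_n s+b_n)\bigr]\,d\nu_n(s)\;\longrightarrow\;0 ,
\]
the expression is identically $0$ for every fixed pair $z_1<z_2$, so letting $z_2\downarrow z_1$ afterwards yields no information about $\lim_n F_3(v_n\mid a_n z_1+b_n)$: you would need to interchange the limit in $n$ with the shrinking of the interval, which requires equicontinuity of $s\mapsto F_3(v_n\mid a_n s+b_n)$ uniformly in $n$ — a regularity that is neither assumed in the lemma nor established by the Von Mises conditions (those control $F_X$, not the conditional law of $Y$ given $X$). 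The paper avoids this order-of-limits problem by interchanging limit and integral \emph{first} (justified because the integrands are dominated by $na_nf_X(a_nz+b_n)\indic(x\leq z<a_n^{-1}(x^+-b_n))$, whose integrals converge to the integral of the limit $g(z)\indic(z\geq x)$), and then invoking the elementary fact that the integral of a nonnegative limit function against the strictly positive density $g$ vanishes iff that limit function vanishes. If you want to salvage your localisation idea, you would have to either add a continuity/monotonicity hypothesis on $x\mapsto F_3(\cdot\mid x)$ or restructure so that the $n$-limit is taken last, which essentially brings you back to the paper's route. (Your detour through $F_1$ via the factorisation is correct but unnecessary; it only gives the averaged statement you are trying to upgrade.)
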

Now, we can see that \eqref{eq: F3 to 1} can be directly understood in terms of the upper-tail dependence coefficient, namely:
\begin{proposition}\label{prop: lambda_U=0 - F_3 to 1}
Consider a bivariate random vector $(X,Y)$ with copula $C$. Then,
\[
F_3\left(c_n y +d_n\mid a_n x + b_n\right) \underset{n\to\infty}{\to} 1,\quad\forall x\geq y\quad \Leftrightarrow\quad \lambda_u=0
\]
where the normalizing constants are chosen depending on the MDA to which $X$ and $Y$ belong.
\end{proposition}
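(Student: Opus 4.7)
The plan is to reformulate Condition~\eqref{eq: F3 to 1} in terms of the joint survival copula $\bar C(u,v):=1-u-v+C(u,v)$ via Sklar's theorem, and then prove both directions by exploiting the coordinatewise monotonicity of $\bar C$ together with the tail expansion $n(1-F_X(a_nx+b_n))\to -\log G_X(x)=:\tau_X(x)$ (and analogously $\tau_Y(y)$) guaranteed by the MDA assumptions. Concretely, Lemma~\ref{lemma: nP to 0 equiv F_3 to 1} combined with Sklar's theorem rephrases Condition~\eqref{eq: F3 to 1} as $n\,\bar C(F_X(a_nx+b_n),F_Y(c_ny+d_n))\nto 0$ for all $x,y\in\R$, while by definition $\lambda_u=\lim_{\alpha\to 1}\bar C(\alpha,\alpha)/(1-\alpha)$.

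For the $(\Leftarrow)$ direction, I would set $\alpha_n:=F_X(a_nx+b_n)$, $\beta_n:=F_Y(c_ny+d_n)$ and $\gamma_n:=\alpha_n\wedge\beta_n$. Coordinatewise monotonicity of $\bar C$ gives $\bar C(\alpha_n,\beta_n)\leq\bar C(\gamma_n,\gamma_n)$, whence
\[
n\,\bar C(\alpha_n,\beta_n)\leq n(1-\gamma_n)\cdot\frac{\bar C(\gamma_n,\gamma_n)}{1-\gamma_n}.
\]
The first factor converges to the finite constant $\tau_X(x)\vee\tau_Y(y)$, and the second tends to $\lambda_u=0$, closing this direction.

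For the $(\Rightarrow)$ direction, the plan is a straddling argument that converts the continuum limit $\alpha\to 1$ defining $\lambda_u$ into the discrete MDA limit available from the hypothesis. Since $x\mapsto\tau_X(x)$ and $y\mapsto\tau_Y(y)$ are surjective onto $(0,\infty)$ (immediate from the explicit forms of the Fr\'echet, Gumbel and Weibull laws), I fix once and for all $x_0,y_0$ with $\tau_X(x_0),\,\tau_Y(y_0)>1$. Given $\alpha$ close to~$1$, pick $n=\lceil 1/(1-\alpha)\rceil$, so that $n(1-\alpha)\to 1$. For $n$ large enough, $n(1-F_X(a_nx_0+b_n))\to\tau_X(x_0)>1$ forces $F_X(a_nx_0+b_n)<\alpha$, and analogously $F_Y(c_ny_0+d_n)<\alpha$; coordinatewise monotonicity of $\bar C$ then yields
\[
\frac{\bar C(\alpha,\alpha)}{1-\alpha}\leq\frac{n\,\bar C\bigl(F_X(a_nx_0+b_n),F_Y(c_ny_0+d_n)\bigr)}{n(1-\alpha)}\nto 0.
\]

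The main obstacle I anticipate is precisely this last step: the hypothesis only delivers a discrete-in-$n$ limit along the specific marginal quantiles $F_X(a_nx+b_n)$, $F_Y(c_ny+d_n)$, whereas $\lambda_u$ is a continuum limit along the diagonal of $[0,1]^2$. Bridging the gap relies on the surjectivity of the GEV exponents (so that one can bracket any prescribed level from below) together with the coordinatewise monotonicity of $\bar C$; no further information on the structure of $C$ is needed.
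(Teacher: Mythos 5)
Your argument is correct, and its first step coincides with the paper's: both reduce the condition $F_3(c_ny+d_n\mid a_nx+b_n)\nto 1$ to the joint-tail condition $n\Pro{X>a_nx+b_n,\,Y>c_ny+d_n}\nto 0$ via Lemma~\ref{lemma: nP to 0 equiv F_3 to 1}. Where you diverge is in the second step: the paper gives no further argument and simply invokes Proposition 5.27 of \cite{Resnick1987} for the equivalence of that joint-tail condition with $\lambda_u=0$, whereas you re-prove that equivalence from scratch. Your substitute proof is sound: the $(\Leftarrow)$ direction correctly combines the coordinatewise monotonicity of $\bar{C}$ with $n(1-\gamma_n)\to\tau_X(x)\vee\tau_Y(y)<\infty$ and the sequential form of the limit defining $\lambda_u$; the $(\Rightarrow)$ direction correctly bridges the discrete-in-$n$ hypothesis and the continuum limit $\alpha\to 1$ by taking $n=\lceil 1/(1-\alpha)\rceil$ (so $n(1-\alpha)\to 1$) and bracketing $\alpha$ by the marginal quantiles, using that the exponents $\tau_X,\tau_Y$ are surjective onto $(0,\infty)$. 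What your route buys is a self-contained, copula-level proof that makes explicit exactly which properties are used (monotonicity of $\bar{C}$ plus the MDA normalization $n(1-F_X(a_nx+b_n))\to\tau_X(x)$); what it costs is length relative to the paper's one-line citation. Two cosmetic points: to invoke the hypothesis as literally stated (``$\forall x\geq y$'') in the $(\Rightarrow)$ direction you should choose your fixed pair with $x_0\geq y_0$, which is always possible; and in the $(\Leftarrow)$ direction the case $\gamma_n=1$ should be dispatched separately (then $\bar{C}(\gamma_n,\gamma_n)=0$), to avoid dividing by $1-\gamma_n=0$. Neither affects the validity of the argument.
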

Note that Proposition~\ref{prop: lambda_U=0 - F_3 to 1} is then simply another way to express Proposition 5.27 in \cite[p. 296]{Resnick1987}, which states that 
\[
\lambda_u=0\quad \Leftrightarrow \quad n\Pro{X>a_n x + b_n,Y>c_n y +d_n}   \underset{n\to\infty}{\to} 0.
\]
Using Lemma~\ref{lemma: nP to 0 equiv F_3 to 1} and Proposition~\ref{prop: lambda_U=0 - F_3 to 1}, we can now rewrite Theorem 2 in \cite{Joshi1995}  in terms of $\lambda_u$.
This reformulation, given in Corollary~\ref{coro: JN95_Theorem_2}, is helpful since it clarifies the connection between the asymptotic independence of $(V_1,V_2)$ and that of $(X,Y)$. 
\begin{corollary}\label{coro: JN95_Theorem_2}
    Suppose $F_X$ satisfies one of the Von Mises conditions and, for all $x,\,y$, $F_X\in\mathscr{D}(G_X)$ and $F_Y\in\mathscr{D}(G_Y)$. Moreover, assume that the upper tail dependence coefficient between $X$ and $Y$ is $0$: $\lambda_u=0$.
If there exist constants $\tilde{a}_n>0,\,\tilde{b}_n$ s.t. $F_1(\tilde{a}_n y+\tilde{b}_n \mid a_n x+b_n)\underset{n\to\infty}\to H(x,y)$, then the cdf $F_{(V_1,V_2)}$ of $(V_1,V_2)$ satisfies \eqref{eq: JN95_Theorem_2_joint_V1V2}.
\end{corollary}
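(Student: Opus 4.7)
The plan is to reduce the corollary directly to Theorem 2 of \cite{Joshi1995}, by showing that the single assumption $\lambda_u=0$ (combined with the MDA hypotheses) subsumes the two analytic tail conditions \eqref{eq: JN95_Theorem_2_joint_extreme_indep_XY} and \eqref{eq: F3 to 1} that appear in the original statement. Since the Von Mises condition on $F_X$ is assumed and the limit \eqref{eq: JN95_Theorem_2_F1_limit} is assumed to exist, verifying \eqref{eq: JN95_Theorem_2_joint_extreme_indep_XY} and \eqref{eq: F3 to 1} is the only work left.

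First, I would fix normalizing sequences $(a_n,b_n)$ and $(c_n,d_n)$ associated to $F_X\in\mathscr{D}(G_X)$ and $F_Y\in\mathscr{D}(G_Y)$, respectively. Invoking Proposition 5.27 of \cite{Resnick1987}, restated just after Proposition \ref{prop: lambda_U=0 - F_3 to 1}, the hypothesis $\lambda_u=0$ is equivalent to $n\Pro{X>a_n x+b_n,\,Y>c_n y+d_n}\to 0$ for every $x,y\in\R$, which is precisely condition \eqref{eq: JN95_Theorem_2_joint_extreme_indep_XY}.

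Next, feeding this joint-tail statement into Lemma \ref{lemma: nP to 0 equiv F_3 to 1} (or, equivalently, applying Proposition \ref{prop: lambda_U=0 - F_3 to 1} directly) yields $F_3\bigl(c_n y+d_n\mid a_n z+b_n\bigr)\to 1$ for every $z\geq x$, and in particular condition \eqref{eq: F3 to 1}. With the Von Mises assumption, the two MDA assumptions, and conditions \eqref{eq: JN95_Theorem_2_joint_extreme_indep_XY}, \eqref{eq: F3 to 1}, \eqref{eq: JN95_Theorem_2_F1_limit} all in force, applying Theorem 2 of \cite{Joshi1995} delivers the stated convergence \eqref{eq: JN95_Theorem_2_joint_V1V2}.

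There is no substantive analytic obstacle: the proof is a chain of previously established equivalences. The one point that requires care is the bookkeeping of the normalizing constants: the sequences $(a_n,b_n,c_n,d_n)$ that appear in the Resnick characterisation of $\lambda_u=0$ must coincide with those appearing in \eqref{eq: F3 to 1} and used as input to Joshi-Nagaraja's theorem. This consistency is automatic thanks to the uniqueness, up to tail equivalence, of MDA normalizing constants, but it is the detail I would flag explicitly in the write-up.
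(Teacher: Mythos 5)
Your proposal is correct and follows essentially the same route as the paper, which presents Corollary~\ref{coro: JN95_Theorem_2} as an immediate consequence of Proposition~\ref{prop: lambda_U=0 - F_3 to 1} (equivalently, Resnick's Proposition 5.27) and Lemma~\ref{lemma: nP to 0 equiv F_3 to 1}, which together show that $\lambda_u=0$ supplies both \eqref{eq: JN95_Theorem_2_joint_extreme_indep_XY} and \eqref{eq: F3 to 1}, so that Theorem 2 of \cite{Joshi1995} applies directly. Your remark about keeping the normalizing constants consistent across the equivalences is a sensible point of care, consistent with the paper's stipulation that the constants are chosen according to the respective maximum domains of attraction.
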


The assumption of asymptotic independence between $X$ and $Y$ is generally too restrictive in practical applications, especially in this highly interconnected world. For instance, over the past 15 years, we have had to live through a major global financial crisis, then a pandemic, combined with an increase in cyber attacks. The presence of systemic risk, with a strong dependence among extremes, is one characteristic of all those events (\emph{e.g.}~\cite{Dacorogna2015}). Hence, investigating this (asymptotic) dependence aspect is of primary interest.  This is the main objective of the next section.


\section{Joint asymptotics for maxima over subsets of concomitants} \label{sec:mainResult}

We aim at studying the limiting distribution of the concomitants $(V_1,V_2)$ when $X$ and $Y$ may exhibit some asymptotic dependence. To do so, we take advantage of the joint-tail model representation given in \cite{Ledford1998}, then we state the main result (in Subsection \ref{ssec:theo}) and discuss the choice of the normalizing constants to avoid degeneracy in the limit.

\vspace{-2ex}
\subsection{The LT joint-tail model}
\label{ssec:LTmodel}
%
First, let us recall some definitions related to the slowly varying notion in the bivariate case. It will be needed to define the joint-tail model.
\begin{definition}
    A function $\calL$ that satisfies
    \begin{equation}\label{eq: BSV_definition}
        \lim_{n\to\infty} \frac{\calL(nx,ny)}{\calL(n,n)}=r(x,y),\qquad\text{with }r(ax,ay) = r(x,y),
    \end{equation}
    for all $a>0$ and $(x,y)\in\R^2_+$, is said to be \emph{bivariate slowly varying} (BSV).
\end{definition}
It can be shown that there exists a univariate function $r_*$ such that 
\begin{equation}\label{eq:rstar}
r(x,y)=r_*(w), \quad\mbox{with}\quad w=\frac{x}{x+y}. 
\end{equation}
\begin{definition}\label{def: BSV}
    A BSV function $\calL$ for which the function $r_*$ defined in \eqref{eq:rstar} satisfies
    \begin{equation*}
        \frac{r_*(w)}{r_*(1-w)}\text{ is SV at $w=0$ and $w=1$},
    \end{equation*}
    is said to be \emph{quasi-symmetric}.
\end{definition}
Let us now present the joint-tail model, named the LT  joint-tail model, introduced in \cite{Ledford1998} to study the asymptotic distribution of the concomitant of the max, $Y_{[n]}$, {\textit i.e.}~the limit distribution of the marginal distribution of $V_1$ (case $k=1$), and to compute $\Pro{Y_{(n)}=Y_{[n]}}$.

Let $(X, Y)$ be a bivariate random vector with unit Fréchet marginals (\emph{i.e.} $F_X(x)=\e^{-1 / x}$, $x>0$) and joint distribution function $F(x, y)$. Suppose that there exist functions $c(t)$ and $\psi(x, y)$ for which the joint survival function \[\bar{F}(x, y)=1-\e^{-1 / x}-\e^{-1 / y}+F(x, y)\] satisfies
\begin{equation}\label{cond: LT98_2.3}
\lim_{t \rightarrow \infty} \frac{t \bar{F}(t x, t y)}{c(t)}=\psi(x, y) \text { for all }(x, y) \in \R_{+}^2.
\end{equation}
Additionally suppose that there exists $\gamma \in \R$ such that
\begin{equation}\label{cond: LT98_2.4}
    \begin{cases}
    \bar{F}(0, t)/\bar{F}(t, 0) &\text{ is  } \text{RV}_\gamma \text{ at infinity and}\\
    \bar{F}(t, 0)/\bar{F}(0, t) &\text{ is  } \text{RV}_{-\gamma}\text{ at infinity}.
    \end{cases}
\end{equation}
Then the following holds.
\begin{definition}[The LT joint-tail model]\label{def-LTmodel}
Assume $(X,Y)$ have unit Fréchet marginals. Let $\eta\in (0,1]$ be the \emph{coefficient of tail dependence} used to determine the decay rate of $F ^{-1}(t,t)$, as $t\to\infty$. 
Under Assumptions~\eqref{cond: LT98_2.3} and \eqref{cond: LT98_2.4}, the joint survival distribution of $(X,Y)$ is given by
\begin{equation}\label{eq: Ledford98_joint-tail}
\Pro{X>x,Y>y}=\calL(x,y)x^{-\alpha}y^{-\beta},\quad \forall x,y,
\end{equation}
where $\alpha,\beta>0,\,\alpha+\beta=\eta^{-1}$ and the function $\calL(x,y)$ is a quasi-symmetric BSV function (see Definition~\ref{def: BSV}).
\end{definition}
The coefficient  $\eta$ describes the type of limiting dependence between $X$ and $Y$, while the function $\calL$ its relative strength given a particular value of $\eta$; see \cite{Ledford1998} and \cite{Heffernan2000} for further comments.
As pointed out in \cite{DeHaan_Zhou2011}, the $\eta=1/2$ corresponds to the case where $X$ and $Y$ are independent, while the case $\eta > 1/2$ indicates a positive association of the extremes of $(X,Y)$.
Values of $\eta$ lower than $1/2$ represent cases where the extremes of $X$ and $Y$ are negatively associated.
The bounding cases of perfect negative and positive dependence correspond respectively to $\eta\to 0$ and $\eta=1$ with $\calL(\cdot)=1$.
\cite{Coles_Heffernan1999} introduce an elementary measure of dependence: Since the survival copula $\bar C$ is given by $\bar{C}(u,v)=1-u-v+C(u,v)$, they consider the following indices to describe the upper-tail dependence:
\begin{align}\label{eq: relation_lambda_U_eta}
&\bar{\chi}:=\lim_{u\to 1}\frac{2\log(1-u)}{\log\bar{C}(u,u)}-1=2\eta-1\\
&\lambda_u:=2-\lim_{u\to 1}\frac{\log C(u,u)}{\log u}=
\begin{cases}
c&\text{ if } \bar{\chi}=1,\,\calL(t)\to c>0,\,\text{as }t\to\infty,\\
0&\text{ if } \bar{\chi}=1,\,\calL(t)\to 0,\,\text{as }t\to\infty,\\
0&\text{ if } \bar{\chi}<1.
\end{cases}
\end{align}

Since $\bar{\chi}=1$ if and only if $\eta=1$, we have a link between $\lambda_u$ and $\eta$ as well. 
In other words,
\begin{itemize}[leftmargin=*]
    \item $X$ and $Y$ are asymptotically independent when $\alpha+\beta=1$ and $\calL(n,n)\underset{n\to\infty}{\to} 0$, or when $\alpha+\beta\geq 1$ (with no condition on the limit as $n\to\infty$ of $\calL(n,n)$); 
    \item $X$ and $Y$ are asymptotically dependent when $\alpha+\beta=1$ and $\calL(n,n)\underset{n\to\infty}{\to}  c>0$. Note that, in this case, $c=\lambda_u$.
\end{itemize}
This turns out to be useful when considering extensions of Theorem 2 in~\cite{Joshi1995}, relaxing Condition~\eqref{eq: F3 to 1}.

For the sake of simplicity, we will use the notation $C_{\not\ind}$ to indicate the asymptotic dependence case, that is $\alpha+\beta=1$ and $\calL(n,n)\underset{n\to\infty}{\to} \lambda_u>0$. Thus, the indicator $\indic_{C_{\not\ind}} = 1$ points out the presence of asymptotic dependence.
%
\subsection{Main result}\label{ssec:theo}
%
Let us state our main result on the joint asymptotic behaviour of maxima of concomitants, allowing for asymptotic dependence of $X$ and $Y$:
\begin{theorem}
\label{th: Joint_max_of_conc}
Let $(X,Y)$ be a bivariate random vector with unit Fréchet marginals and upper-tail dependence coefficient $\lambda_u$. Assume that $(X,Y)$ follows the LT joint-tail model given in Definition~\ref{def-LTmodel}, with survival cdf~\eqref{eq: Ledford98_joint-tail}, 
%
and  that the BSV function $\calL$ is such that
\begin{align}\label{cond: limit of L(n,a_n)} 
& \qquad\qquad \qquad \calL(nx,\tilde{a}_ny+\tilde{b}_n)\underset{n\to\infty}{\to} \tilde{c}(x,y)\quad \text{where} \\
& \tilde{c}(x,y)\,x^{1-\alpha}<y^{\beta}, \;\; \tilde{c}(x,y)\underset{y\to 0}{\sim} y^{-\beta},\;\; \tilde{c}(x,y)=\smallo{y^{-\beta}} \;\text{for}\; y\to\infty, \nonumber\\
& \text{with}  \quad 
\tilde{a}_n=\mathcal{O}\left(n^{\frac{1-\alpha}{\beta}}\right)\quad\text{ and }\quad \tilde{b}_n={O}\left(\tilde{a}_n\right) \text{ or }\, o\left(\tilde{a}_n\right). \nonumber
\end{align}
Then, the joint distribution of the concomitants maxima $(V_1,V_2)$ defined in \eqref{eq: maxima of conc} satisfies:
\begin{equation}\label{eq: Joint_V1V2}
    \begin{split}
        &F_{(V_1,V_2)}(\tilde{a}_n v_1+\tilde{b}_n,\,n v_2)\\
        &\xrightarrow[n\to\infty]{} \int_0^{+\infty} H_1^k(v_1\mid x)\,H_2(v_2\mid x)\,\frac{x^{-k-2}}{k!}\,\emph{\e}^{-1/x}\dd{x},\quad\forall v_1,v_2\in\R ,
    \end{split} 
\end{equation}
where $H_1(y\mid x):=1-\tilde{c}(x,y)\,x^{1-\alpha}\,y^{-\beta}$ and 
$\displaystyle H_2(y\mid x)$ is defined by the product of the limits given below in \eqref{eq: F2_limit} and \eqref{eq: F3_limit}, respectively, with $r(x,y)=\lambda_u^{-1}\tilde{c}(x,y)$.
\end{theorem}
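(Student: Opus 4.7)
My strategy is to decompose the joint distribution of $(V_1,V_2)$ by conditioning on the order statistics of $X$, exploiting the classical fact that the concomitants are conditionally independent given these order statistics, with $Y_{[i]}\mid X_{(i)}=u\sim F_3(\cdot\mid u)$. The decisive step is to condition on $X_{(n-k)}=u$: this separates the top $k$ order statistics of $X$ (whose concomitants make up $V_1$) from the bottom $n-k-1$ (which, together with $Y_{[n-k]}$, make up $V_2$). Given $X_{(n-k)}=u$, the top $k$ order statistics are distributed (as a set) as an i.i.d.\ sample of size $k$ from $F_X$ truncated to $\{X>u\}$, hence by symmetry and the tower property
\[
\E{\prod_{i=n-k+1}^{n} F_3(v_1\mid X_{(i)}) \con X_{(n-k)}=u} = F_1(v_1\mid u)^{k}.
\]
Analogously, the bottom $n-k-1$ order statistics are i.i.d.\ from $F_X\mid X\le u$, producing $F_2(v_2\mid u)^{n-k-1}$, while the fixed value $X_{(n-k)}=u$ forces $Y_{[n-k]}$ to contribute the factor $F_3(v_2\mid u)$. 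Integrating against the density of $X_{(n-k)}$ then yields the exact identity
\begin{equation*}
F_{(V_1,V_2)}(v_1,v_2)=\int_0^{\infty} F_1(v_1\mid u)^{k}\,F_3(v_2\mid u)\,F_2(v_2\mid u)^{n-k-1}\,f_{X_{(n-k)}}(u)\,\dd{u}.
\end{equation*}

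\textbf{Passing to the limit.} Next I perform the change of variables $u=nx$ and impose the scalings $v_1\mapsto \tlan v_1+\tlbn$, $v_2\mapsto n v_2$. A Beta-type expansion combined with the unit Fréchet asymptotics $F_X(nx)\to \e^{-1/x}$ and $f_X(nx)\sim (nx)^{-2}$ yields the density limit $n\,f_{X_{(n-k)}}(nx)\nto x^{-k-2}\e^{-1/x}/k!$, which is precisely the weight appearing in \eqref{eq: Joint_V1V2}. For the $F_1^{k}$ factor, the LT representation $\bar F(x,y)=\calL(x,y)x^{-\alpha}y^{-\beta}$ together with $\Pro{X>u}\sim 1/u$, the hypothesis $\calL(nx,\tlan y+\tlbn)\to \tilde c(x,y)$, and the balance $\tlan=\bigO{n^{(1-\alpha)/\beta}}$ give $F_1(\tlan v_1+\tlbn\mid nx)\nto H_1(v_1\mid x)=1-\tilde c(x,v_1)\,x^{1-\alpha}\,v_1^{-\beta}$. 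The remaining two factors live at the symmetric scaling $\calL(nx,nv_2)\to \lambda_u r(x,v_2)=\tilde c(x,v_2)$: the classical exponential limit applied to $F_2(nv_2\mid nx)^{n-k-1}$ produces one of the two limits defining $H_2$, while differentiating $\Pro{X>x,Y\le y}$ in $x$ gives the expansion $F_3(y\mid x)\sim 1-\alpha\calL(x,y)x^{1-\alpha}y^{-\beta}$ for large $x$, which produces the other. Their product is exactly $H_2(v_2\mid x)$.

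\textbf{Main obstacle.} The delicate point is the exchange of limit and integral. A dominating majorant uniform in $n$ must be built from the hypotheses on $\tilde c$, namely $\tilde c(x,y)\,x^{1-\alpha}<y^\beta$ together with the boundary behaviour $\tilde c(x,y)\sim y^{-\beta}$ as $y\to 0$ and $\tilde c(x,y)=\smallo{y^{-\beta}}$ as $y\to \infty$, combined with the integrability of $x^{-k-2}\e^{-1/x}$ on $(0,\infty)$. A second, more bookkeeping, ingredient is to track both sub-cases $\tlbn=\bigO{\tlan}$ and $\tlbn=\smallo{\tlan}$ when reducing $(\tlan v_1+\tlbn)^{-\beta}$ to $v_1^{-\beta}$; here the quasi-symmetry of $\calL$ from Definition~\ref{def: BSV} is what allows the additive shift to be absorbed into $\tilde c(x,v_1)$ without producing degenerate behaviour at the boundaries.
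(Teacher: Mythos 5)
Your proposal follows essentially the same route as the paper: condition on $X_{(n-k)}$ to obtain the exact identity $F_{(V_1,V_2)}(v_1,v_2)=\E{F_1^k(v_1\mid X_{(n-k)})\,F_2^{n-k-1}(v_2\mid X_{(n-k)})\,F_3(v_2\mid X_{(n-k)})}$ (the paper cites \cite{Joshi1995} and \cite{Kaufman1992} for this identity, whereas you re-derive it), then pass to the limit using $n f_{X_{(n-k)}}(nx)\nto x^{-k-2}\e^{-1/x}/k!$ together with the three conditional limits, which is exactly the content of Lemma~\ref{lemma: F2_F3_limit}. Your insistence on building an explicit dominating majorant for the limit--integral interchange is in fact more self-contained than the paper, which defers that step to \cite{Joshi1995} and \cite{Nagaraja1994}.

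One concrete correction: your displayed asymptotic $F_3(y\mid x)\sim 1-\alpha\calL(x,y)x^{1-\alpha}y^{-\beta}$ is incomplete. Differentiating $\bar F(x,y)=\calL(x,y)x^{-\alpha}y^{-\beta}$ in $x$ produces \emph{two} terms,
$F_3(y\mid x)=1+\e^{1/x}x^{2-\alpha}y^{-\beta}\bigl(\tfrac{\dpar}{\dpar x}\calL(x,y)-\tfrac{\alpha}{x}\calL(x,y)\bigr)$,
and the $\tfrac{\dpar}{\dpar x}\calL$ contribution does not vanish in the limit: it survives as the $\tfrac{\dpar}{\dpar x}r(x,y)$ term in \eqref{eq: F3_limit} and is visibly nonzero in both worked examples (compare \eqref{eq: SClayton_H2}). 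Since $H_2$ is defined as the product of the limits \eqref{eq: F2_limit} and \eqref{eq: F3_limit}, dropping this term would yield the wrong $H_2$. The method you describe --- differentiate the joint survival in $x$ and divide by $f_X$ --- is the correct one and gives the full expression when carried out; only the stated intermediate formula needs fixing.
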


\vspace{2ex}
To prove Theorem~\ref{th: Joint_max_of_conc}, we need to evaluate the asymptotic behavior of the three conditional distributions $F_1$, $F_2$ and $F_3$ defined in \eqref{def-F1}, \eqref{def-F2} and \eqref{def-F3}, respectively, each adequately transformed to obtain a non-degenerated limit distribution. This is what is presented in the following lemma.
\begin{lemma}\label{lemma: F2_F3_limit}
    Assume the LT joint-tail model \eqref{eq: Ledford98_joint-tail} holds and call $r$ the limit function corresponding to $\calL$. Then, as $n\to\infty$,
    \begin{align}
        F^{n}_{2}(ny\mid nx)&\to \exp\left\{-\frac 1{y}\left(1-\lambda_u r(x,y){\left(\frac yx\right)}^{\alpha}\indic_{C_{\not\ind}}\right)\right\},\label{eq: F2_limit}\\
        F_3(ny\mid nx)&\to 1 + \lambda_u x^{-\alpha+2}y^{\alpha-1}\left(\frac{\dpar}{\dpar x}r(x,y)-\frac{\alpha}{x}r(x,y)\right)\indic_{C_{\not\ind}}.\label{eq: F3_limit}
    \end{align}
Moreover, under Condition~\eqref{cond: limit of L(n,a_n)}, we have
    \begin{align}
        F_{1}(\tilde a_n y+ \tilde b_n\mid nx)& \underset{n\to\infty}{\to}\,\tilde{c}(x,y).
    \end{align}
\end{lemma}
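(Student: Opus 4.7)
The plan is to re-express each of the three conditional cdfs in terms of the joint survival function $\bar F$ and then feed in the LT representation \eqref{eq: Ledford98_joint-tail} together with the unit-Fréchet expansions. Inclusion-exclusion gives $F_{X,Y}(x,y)=F_X(x)+F_Y(y)-1+\bar F(x,y)$, hence
\[
F_1(y\mid x)=1-\frac{\bar F(x,y)}{\bar F_X(x)},\quad F_2(y\mid x)=1+\frac{F_Y(y)-1+\bar F(x,y)}{F_X(x)},\quad F_3(y\mid x)=1+\frac{\dpar\bar F(x,y)/\dpar x}{f_X(x)}.
\]
At the diagonal scale I will repeatedly use $F_X(nx)\to 1$, $\bar F_X(nx)\sim 1/(nx)$, $1-F_Y(ny)\sim 1/(ny)$, $f_X(nx)\sim 1/(n^2x^2)$, together with the BSV limit $\calL(nx,ny)\to\lambda_u r(x,y)\,\indic_{C_{\not\ind}}$ which follows from the definition of $r$ and from $\calL(n,n)\to\lambda_u$ in the dependent regime (and which forces $\bar F(nx,ny)=o(1/n)$ otherwise, so that the indicator $\indic_{C_{\not\ind}}$ handles both regimes uniformly).

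For $F_2^n(ny\mid nx)$ I gather the two $O(1/n)$ contributions in the numerator; in the dependent regime $\alpha+\beta=1$ yields $y\cdot x^{-\alpha}y^{-\beta}=(y/x)^{\alpha}$, and the standard limit $(1+z_n/n)^n\to\e^{z}$ delivers \eqref{eq: F2_limit}. For $F_3(ny\mid nx)$ I differentiate the LT form, writing
\[
\frac{\dpar\bar F}{\dpar x}=\frac{\dpar\calL}{\dpar x}\,x^{-\alpha}y^{-\beta}-\alpha\,\calL\,x^{-\alpha-1}y^{-\beta},
\]
and then invoking the chain rule $\dpar[\calL(nx,ny)]/\dpar x = n\,(\dpar_1\calL)(nx,ny)$. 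Assuming the BSV convergence $\calL(nx,ny)\to\lambda_u r(x,y)\indic_{C_{\not\ind}}$ passes inside the derivative (see the obstacle noted below), this produces $(\dpar_1\calL)(nx,ny)\sim (\lambda_u/n)\,(\dpar r/\dpar x)(x,y)\indic_{C_{\not\ind}}$; dividing by $f_X(nx)$ and reading off the powers with $\beta=1-\alpha$ gives \eqref{eq: F3_limit}. Justifying this differentiation step is the main technical obstacle; it will lean on the quasi-symmetric BSV regularity of $\calL$ encoded in Definition~\ref{def: BSV}, which provides the uniform-on-compacts smoothness needed to commute $\lim_n$ and $\dpar/\dpar x$.

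The $F_1$ statement is the most transparent. Writing $F_1(\tilde a_n y+\tilde b_n\mid nx)=1-\bar F(nx,\tilde a_n y+\tilde b_n)/\bar F_X(nx)$, I invoke Condition~\eqref{cond: limit of L(n,a_n)} to replace the $\calL$-factor by $\tilde c(x,y)$. The scaling $\tilde a_n=\mathcal{O}(n^{(1-\alpha)/\beta})$ is calibrated precisely so that $(nx)^{1-\alpha}\tilde a_n^{-\beta}$ stabilises, while $\tilde b_n=O(\tilde a_n)$ or $o(\tilde a_n)$ allows the shift to be absorbed into $(\tilde a_n y+\tilde b_n)^{-\beta}\sim\tilde a_n^{-\beta}y^{-\beta}$. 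Together with the sign and growth constraints on $\tilde c$ in \eqref{cond: limit of L(n,a_n)}, this gives the announced limit and verifies that it corresponds to a non-degenerate conditional distribution in $y$, thereby closing all three parts of the lemma.
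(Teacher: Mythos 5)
Your proposal follows essentially the same route as the paper's proof: each conditional cdf is rewritten in terms of the joint survival function $\bar F$, the LT representation \eqref{eq: Ledford98_joint-tail} and the unit-Fréchet expansions are substituted, and the limits are read off with $\calL(nx,ny)\sim\calL(n,n)r(x,y)$ and the indicator $\indic_{C_{\not\ind}}$ absorbing the asymptotically independent regime exactly as in the appendix. You even flag explicitly the limit--derivative interchange for $F_3$ that the paper performs without comment, so nothing essential is missing.
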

%
Note that taking $\indic_{C_{\not\ind}}=0$ in \eqref{eq: F2_limit} and \eqref{eq: F3_limit} provides 
$\displaystyle \lim_{n\to\infty} F^{n}_{2}(ny\mid nx) =\e^{-1/y}$ and $\displaystyle \lim_{n\to\infty} F_3(ny\mid nx)=1$, which characterise copulas with asymptotic independence, as considered in \cite{Joshi1995}.
The proofs of Theorem~\ref{th: Joint_max_of_conc} and Lemma~\ref{lemma: F2_F3_limit} are developed in Appendix~\ref{app: proofs}.

It is worth noticing that Condition~\eqref{cond: limit of L(n,a_n)} is needed to ensure that $H_1(v_1\mid x)$ is a proper cdf~for $v_1\in (0,+\infty)$.
Moreover, 
our Theorem~\ref{th: Joint_max_of_conc} gives precise formulations on the choice of the normalizing constants $\tilde{a}_n$ and $\tilde{b}_n$. This is in fact a delicate point, reason why we dedicate a comprehensive discussion on it: Explanations relative to Condition~\eqref{cond: limit of L(n,a_n)} are given in the next subsection, while Section~\ref{sec: Gaussian case} addresses the fundamental example of the Gaussian bivariate distribution, for which Condition~\eqref{cond: limit of L(n,a_n)} does not hold.
\subsection{Discussion on the choice of $\tilde{a}_n$ and $\tilde{b}_n$} \label{subsec: discussion_an_bn_tilde}
%
In the following, we carefully deduce which are the only possible asymptotic behaviours of the constants $\tlan$ and $\tlbn$. We highlight the importance of the present subsection, as finding suitable constants which lead to non degenerate limits represents generally an issue in asymptotic theories.

We start by noticing that the limit of $F_1(\tilde{a}_n y+\tilde{b}_n\mid nx)$ strongly depends on that of $\calL(nx,\tilde{a}_n y+\tilde{b}_n)$. Indeed, we can write 
    \begin{align}\label{approxF1-L}
        F_1(\tilde{a}_n y+\tilde{b}_n\mid nx)&=\, 1-\frac{\calL(nx,\tilde{a}_n y+\tilde{b}_n){(nx)}^{-\alpha}{(\tilde{a}_n y+\tilde{b}_n)}^{-\beta}}{1-\e^{-1/(nx)}} \nonumber\\
        &\underset{n\to\infty}\sim\, 1-\frac{\calL(nx,\tilde{a}_n y+\tilde{b}_n)}{n^{\alpha-1} x^{\alpha-1}{(\tilde{a}_n y+\tilde{b}_n)}^{\beta}}
    \end{align}
using the approximation $1-\e^{-1/(nx)}\underset{n\to\infty}\sim{(nx)}^{-1}$.

In the following, we discuss which sequences $\tilde{a}_n$ and $\tilde{b}_n$ should be chosen to have $F_1(\tilde{a}_n y+\tilde{b}_n\mid nx)\underset{n\to\infty}{\to} 1$, \emph{i.e.}~$V_1$ not degenerate.

Given a couple $(x,y)$, we study the limit of \eqref{approxF1-L} depending on the behavior of $\calL$, namely:
\begin{enumerate}[leftmargin=*, label=(\roman*)]
\item \label{case: (i)} Assume $\calL(nx,\tilde{a}_n y+\tilde{b}_n)\to 0$ as $n\to\infty$.
To have $F_1(\tilde{a}_n y+\tilde{b}_n\mid nx)\not\to 1$, we need $\frac{\calL(nx,\tilde{a}_n y+\tilde{b}_n)}{n^{\alpha-1} x^{\alpha-1}{(\tilde{a}_n y+\tilde{b}_n)}^{\beta}}\not\to 0$. But, since we are in the case $\calL(nx,\tilde{a}_n y+\tilde{b}_n)\underset{n\to\infty}{\to} 0$, this would mean, either
\begin{itemize}
\item $n^{\alpha-1} x^{\alpha-1}{(\tilde{a}_n y+\tilde{b}_n)}^{\beta}\underset{n\to\infty}{\to} 0$ at the same rate of $\calL$, which is impossible because of the assumptions of the model ($\calL\sim\text{SV}$), or
\item $n^{\alpha-1} x^{\alpha-1}{(\tilde{a}_n y+\tilde{b}_n)}^{\beta}\underset{n\to\infty}{\to} 0$ faster than $\calL$, which is impossible because we would have $\frac{\calL(nx,\tilde{a}_n y+\tilde{b}_n)}{n^{\alpha-1} x^{\alpha-1}{(\tilde{a}_n y+\tilde{b}_n)}^{\beta}}\underset{n\to\infty}{\to} \infty$, that denies $F_1$ is a cdf.
\end{itemize}
Hence case~\ref{case: (i)} is not possible.
\item\label{case: (ii)} Assume $\calL(nx,\tilde{a}_n y+\tilde{b}_n)\to \infty$ as $n\to\infty$. So, as $n\to\infty$,
    the term $n^{\alpha-1} x^{\alpha-1}{(\tilde{a}_n y+\tilde{b}_n)}^{\beta}$ tends to infinity faster than $\calL(nx,\tilde{a}_n y+\tilde{b}_n)$ does, which yields that $F_1(\tilde{a}_n y+\tilde{b}_n\mid nx)\underset{n\to\infty}{\to} 1$.  Hence, this case~\ref{case: (ii)} has also to be discarded.
\item The previous reasoning implies that we can only have, as $n\to\infty$,
\[
    \calL(nx,\tilde{a}_n y+\tilde{b}_n)\underset{n\to\infty}{\to}\tilde{c}(x,y)\in(0,\infty).
\]
Which conditions on $\tilde{a}_n$ and $\tilde{b}_n$ can be given? Here, we have to distinguish whether $\tilde{c}(x,y)$ depends on $y$ or not.
\begin{enumerate}[label=(\Alph*)]
\item \emph{Suppose $\tilde{c}(x,y)$ depends on $y$.} Then,\\ 
$\displaystyle \frac{\calL(nx,\tilde{a}_n y+\tilde{b}_n)}{n^{\alpha-1} x^{\alpha-1}{(\tilde{a}_n y+\tilde{b}_n)}^{\beta}}\underset{n\to\infty}{\sim}\frac{\tilde{c}(x,y)}{n^{\alpha-1} x^{\alpha-1}{(\tilde{a}_n y+\tilde{b}_n)}^{\beta}}$, so we have to consider the relative behaviour of $\tilde{a}_n$ and $\tilde{b}_n$.
    \begin{enumerate}[label=(\alph*)]
    \item If $\tilde{a}_n=o(\tilde{b}_{n})$, then $\tilde{a}_n y+\tilde{b}_n\,{\sim}\,\tilde{b}_{n}$, as $n\to\infty$. This implies\\ 
$\frac{\tilde{c}(x,y)}{n^{\alpha-1} x^{\alpha-1}{(\tilde{a}_n y+\tilde{b}_n)}^{\beta}}\underset{n\to\infty}{\sim}\frac{\tilde{c}(x,y)}{n^{\alpha-1} x^{\alpha-1}{\tilde{b}_n}^{\beta}}$, so, in order to maintain the dependence on $y$, we need $n^{\alpha-1}\tilde{b}_n^{\beta}\underset{n\to\infty}{\not\to} 0,\infty$.
This condition obviously depends on the value of $\alpha$: Call $\gamma=(1-\alpha)/\beta$, then
        \begin{itemize}
        \item If $0<\alpha<1$, $n^{\alpha-1}\underset{n\to\infty}{\to}0$, the only possibility is $\tilde{b}_n=\bigO{n^{\gamma}}$ with $\gamma\in(0,1)$, since $\alpha+\beta\geq 1$.
        \item If $\alpha=1$, then $\tilde{b}_n$ would be independent of $n$; it is then impossible to propose $(\tilde{a}_n,\tilde{b}_n)$ in such a case.
        \item If $\alpha>1$, then $\tilde{b}_n=\bigO{n^{\gamma}}$ with $\gamma<0$. However, since we are interested in the dependence in the extremes, we need to consider
\[
n\Pro{X>nx,Y>n^{\gamma}y}\leq n^{1-\gamma}\cdot n^{\gamma}\Pro{X>n^{\gamma}x,Y>n^{\gamma}y}\underset{n\to\infty}{\not\to} 0,
\]
which implies $\gamma\in(0,1)$. So, the case $\alpha>1$ cannot be considered.
        \end{itemize}
    \item If $\tilde{b}_n=\smallo{\tilde{a}_{n}}$ or $\tilde{b}_n=\bigO{\tilde{a}_{n}}$, then $\tilde{a}_n y+\tilde{b}_n \underset{n\to\infty}{\sim}\tilde{a}_{n}y$\, and
\[
\frac{\tilde{c}(x,y)}{n^{\alpha-1} x^{\alpha-1}{(\tilde{a}_n y+\tilde{b}_n)}^{\beta}}
\underset{n\to\infty}{\sim} \frac{1}{n^{\alpha-1} {\tilde{a}_n}^{\beta}}\,\tilde{c}(x,y)\,x^{1-\alpha}.
\]
Hence, we need to consider $\tilde{a}_n=\bigO{n^{\gamma}}$ and $\alpha\in(0,1)$.
    \end{enumerate}
\item \emph{Suppose $\tilde{c}(x,y)$ does not depend on $y$.}
In this case, 
\[
\frac{\calL(nx,\tilde{a}_n y+\tilde{b}_n)}{n^{\alpha-1} x^{\alpha-1}{(\tilde{a}_n y+\tilde{b}_n)}^{\beta}} \underset{n\to\infty}{\sim} \frac{n^{1-\alpha}}{\max\left(\tilde{a}_n^{\beta} y^{\beta},\tilde{b}_n^{\beta}\right)}\tilde{c}x^{1-\alpha},
\]
and since we do not want to loose the dependence on $y$, we need to assume $\max\left(\tilde{a}_n^{\beta} y^{\beta},\tilde{b}_n^{\beta}\right)=\tilde{a}_n^{\beta} y^{\beta}$. Moreover, we need to ask $n^{1-\alpha}\tilde{a}_n^{-\beta}\not\to 0$ (otherwise $F_1(\tilde{a}_n y+\tilde{b}_n\mid nx)\underset{n\to\infty}{\to}1$), which implies $\tilde{a}_n=\bigO{n^{\gamma}}$ and $\alpha\in(0,1)$.
\end{enumerate}
\end{enumerate}
Therefore, to obtain a non degenerate limit for $F_1(\tilde{a}_n y+\tilde{b}_n\mid nx)$, we need to assume $\alpha\in(0,1)$, $\tilde{a}_n=\bigO{n^{\gamma}}$ and $\tilde{b}_n=\smallo{\tilde{a}_{n}}$ or $\tilde{b}_n=\bigO{\tilde{a}_{n}}$.
Now that the main result is provided with a justification of the constants involved, let us illustrate the theoretical results on standard examples of the risk literature. 
\section{Illustration of the results}\label{sec: Examples}
In this section, we consider two examples: the Pareto - Lomax with Survival Clayton copula and the Bivariate Symmetric Logistic Extremal copula.
We apply our asymptotic approximations given in Theorem~\ref{th: Joint_max_of_conc} and compare it with the empirical results obtained via simulation.
The choice of such examples lies in the fact that the extremal dependence is ruled by parameters, so easy to control. Note also that for the following examples we can achieve the same results by direct computation (with a considerable amount of time and tedious calculations), thus it is possible to make comparisons by means of the mathematical expressions obtained.

We simulate $5000$ replicates of $100000$ bivariate samples. The dimension $k$ of the subsample used to compute $V_1$ is set to $10$ (Appendix~\ref{app: other k} contains simulations relative to other values of $k$, to have insights of its impact on the asymptotic result).

\subsection{Pareto - Lomax marginals with Survival Clayton copula}\label{subsec: counterex_JN95_Pareto - Lomax_SClayton}
Let $(X_P,Y_P)$ be a random vector with Survival Clayton$(\theta)$ copula and Pareto - Lomax$(\nu,1)$ marginals (the subscript $P$ indicates the Pareto marginals):
\begin{align*}
    &\bar{F}_{X_P}(x)={(1+x)}^{-\nu},\,\forall x>0,\quad \bar{F}_{Y_P}(y)={(1+y)}^{-\nu},\,\forall y>0,\quad \nu>0,\\
    &\bar{F}_{P}(x,y)=\Pro{X_P>x,Y_P>y}={\left({(1+x)}^{\nu\theta}+{(1+y)}^{\nu\theta}-1\right)}^{-1/\theta},\,\theta>0.
\end{align*}
\begin{figure}[t]
    \centering
    \subfloat[][\emph{Survival Clayton copula $(\theta=2)$}]{\includegraphics[trim={5cm 1cm 4cm 2cm}, clip, width = 0.45\linewidth]{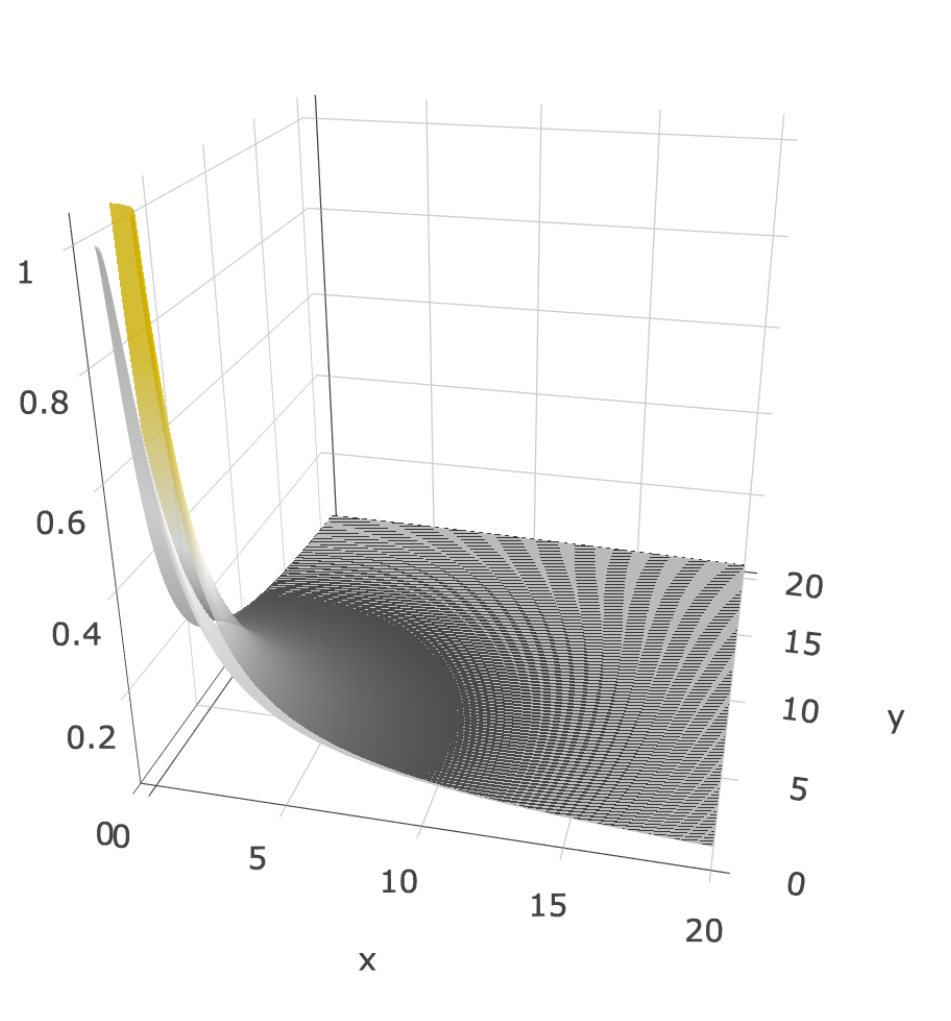}}\quad
    \subfloat[][\emph{Logistic copula $(\gamma=1/2)$}]{\includegraphics[trim={5cm 1cm 4cm 2cm}, clip, width = 0.45\linewidth]{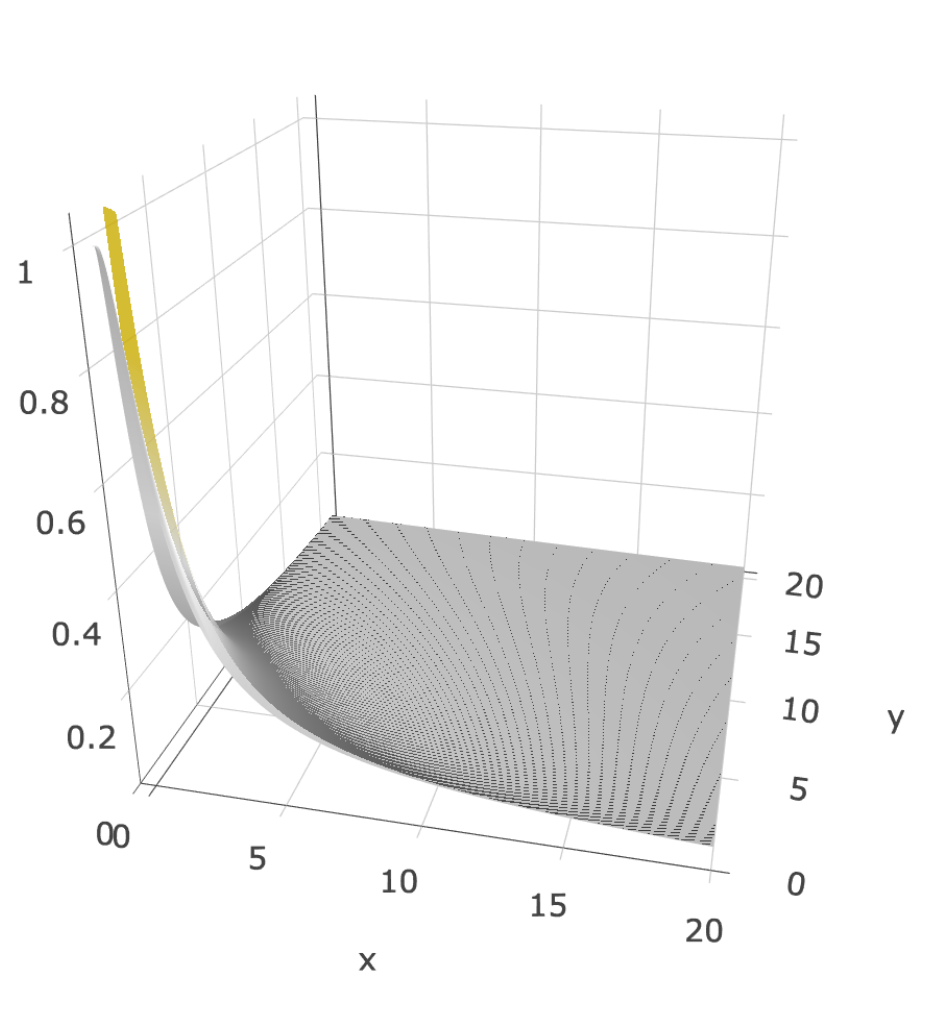}}
    %
    \caption{Joint tail $\bar{F}(x,y)$ (gray) with unit Fréchet marginals, as expressed in \eqref{eq: joint_SClayton}, and its approximation (black and gold) given by Equation \eqref{eq: Ledford98_joint-tail}. The parameters $\alpha$ and $\beta$ are both set to $0.5$ in the two examples.}\label{fig: Joint tail approximation}
\end{figure}

First, we transform te Pareto marginals into the unit Fréchet ones, to match the assumptions of the model given in Theorem~\ref{th: Joint_max_of_conc}. We can write
\[
F(x,y)=F_{P}\left(F_{X_P}^{-1}\left(\e^{-1/x}\right),F_{Y_P}^{-1}\left(\e^{-1/y}\right)\right),
\]
where, for the Pareto distribution, $F_{X_P}^{-1}(q)=F_{Y_P}^{-1}(q)={(1-q)}^{-1/\nu}-1$. We obtain
\begin{equation}\label{eq: joint_SClayton}
    \bar{F}(x,y)={\left({\left(1-\e^{-1/x}\right)}^{-\theta}+{\left(1-\e^{-1/y}\right)}^{-\theta}-1\right)}^{-1/\theta}.
\end{equation}
Now, observe that the joint survival \eqref{eq: joint_SClayton} can be rewritten, for $x>x_0>0$ and $y>y_0>0$, as
\[
    \begin{split}
        \bar{F}(x,y)&\sim
        {\left(x^{\theta}+y^{\theta}-1\right)}^{-1/\theta}
        =
        {\left(x^{\theta}+y^{\theta}-1\right)}^{-1/\theta}{(xy)}^{1/2}{(xy)}^{-1/2}\\&=\mathcal{L}(x,y)x^{-1/2}y^{-1/2},
    \end{split}
\]
where $\mathcal{L}(x,y)={\left(x^{\theta}+y^{\theta}-1\right)}^{-1/\theta}{(xy)}^{1/2}{(xy)}$ and clearly $\alpha=\beta=1/2$.

Note that
\[
\mathcal{L}(n,n)={(2n^{\theta}-1)}^{-1/\theta}n\to 2^{-1/\theta}\quad\text{as }n\to\infty.
\]
This is coherent with the value of the upper-tail dependence coefficient, since
\[
\begin{split}
\lambda_U=\lim_{q\to 1}\Pro{Y>F_Y^{-1}(q)\mid F_X^{-1}(q)}=\lim_{q\to 1}{(1-q)}^{-1}{(2{(1-q)}^{-\theta}-1)}^{-1/\theta}=2^{-1/\theta}.
\end{split}
\]
\begin{figure}[t]
    \centering
    \subfloat[][\emph{Bivariate distribution of $(V_1,V_2)$.}\label{subfig: SC_comparison_surface}]{\includegraphics[trim={5cm 1cm 4cm 2.5cm}, clip, width = 0.37\linewidth]{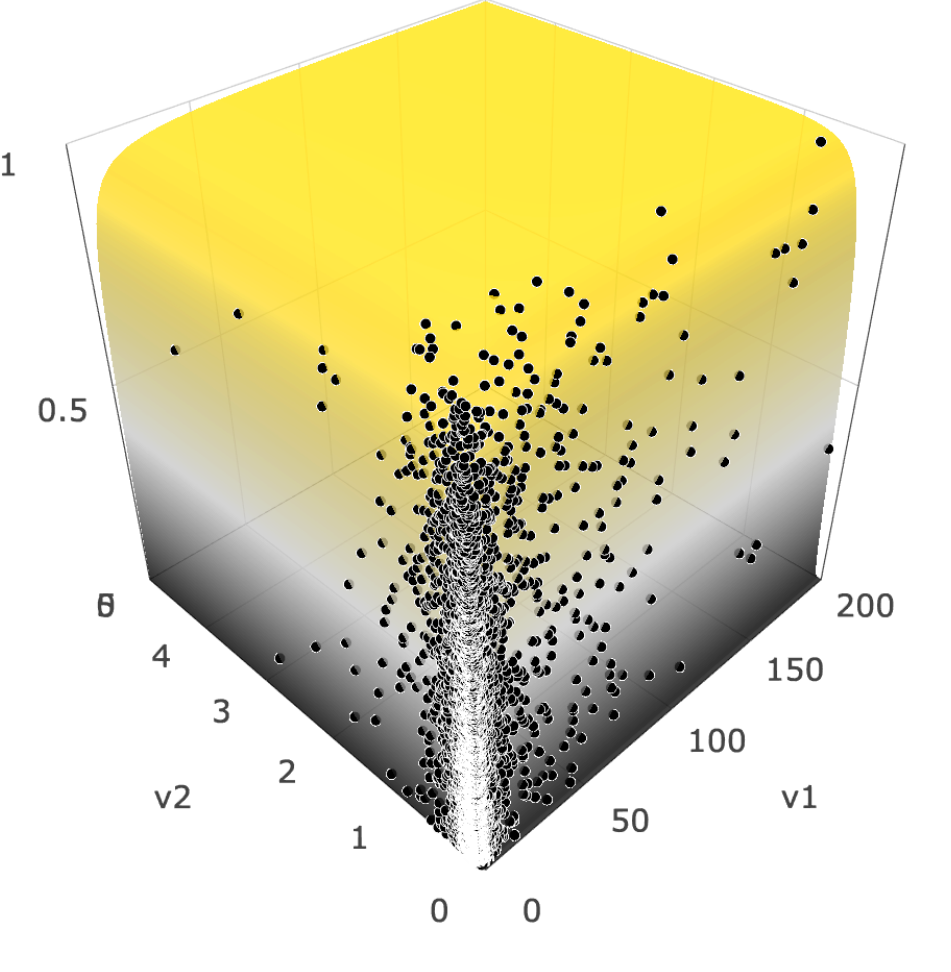}}
    \quad
    \subfloat[][\emph{Error evaluation.}\label{subfig: SC_comparison_boxplot}]{\includegraphics[trim={5cm 0cm 5cm 0cm}, clip, width = 0.168\linewidth]{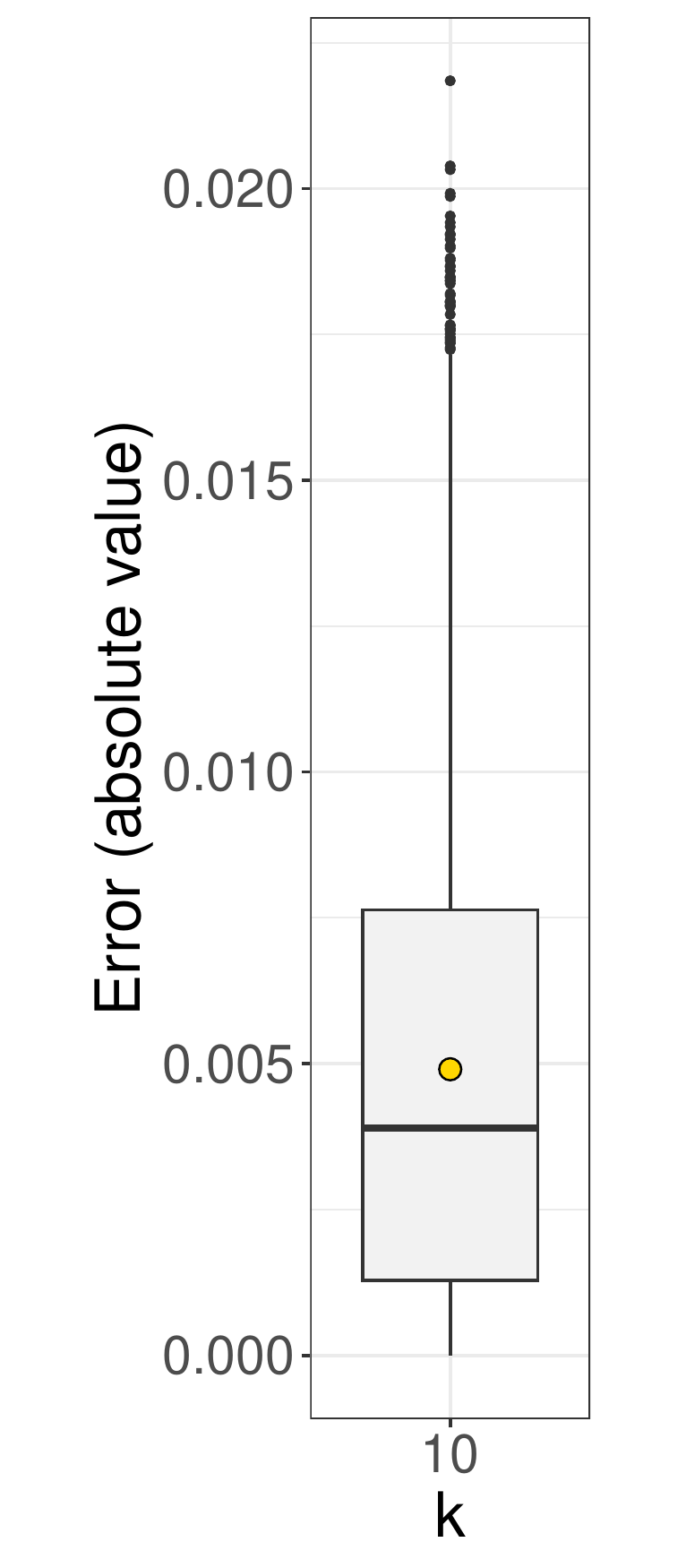}}
    \quad
    \subfloat[][\emph{Marginal distributions diagnostics.}\label{subfig: SC_comparison_margins}]{\includegraphics[width = 0.39\linewidth]{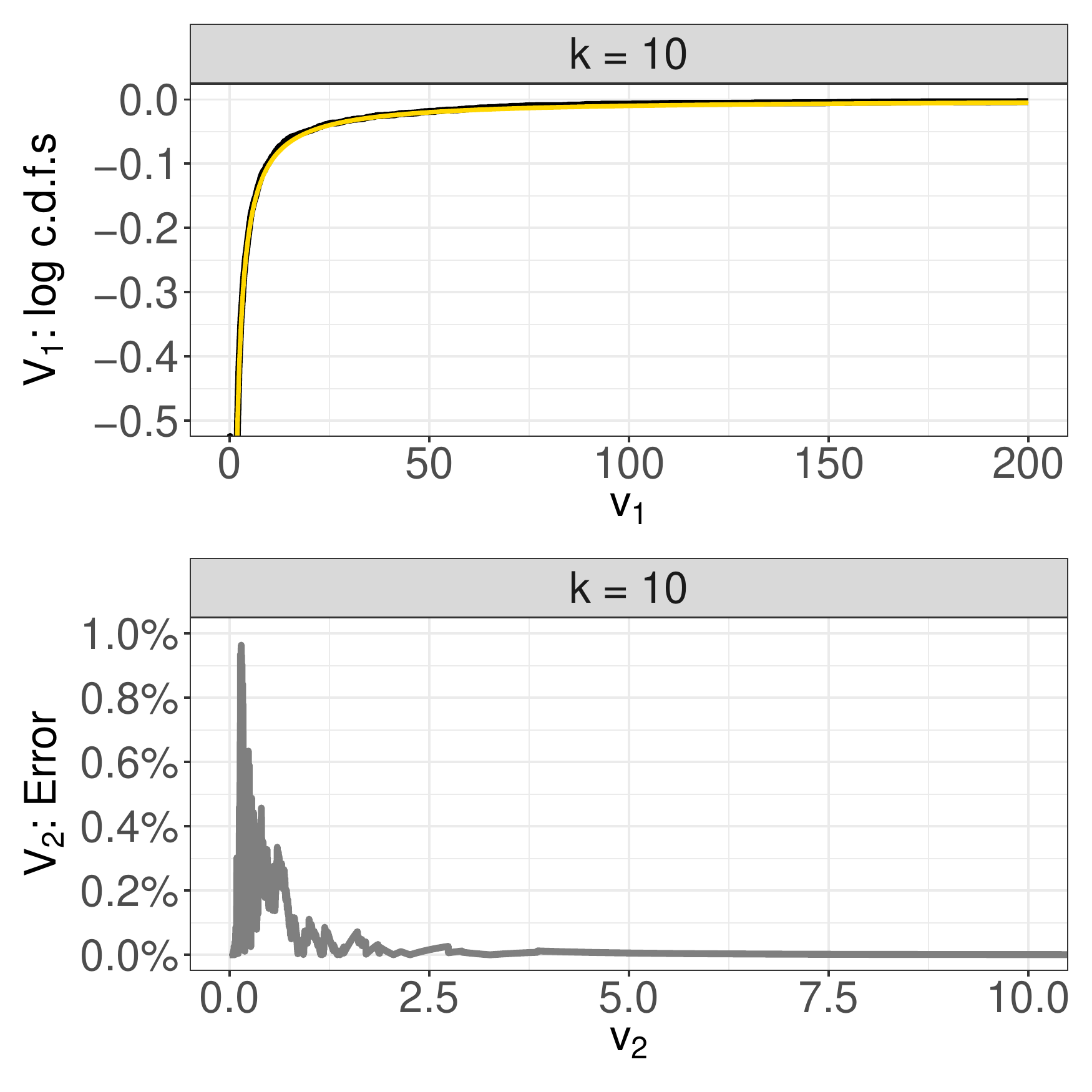}}
    \caption{Survival Clayton$(\theta=2)$, with $k = 10$. Panel (a) shows the empirical univariate log cdf of $V_1$ (in black), together with the asymptotic theoretical log values (in gold) obtained by numerically integrating the asymptotic joint cdf \eqref{eq: Joint_V1V2} to obtain the marginal. Panel (b) shows the absolute values of the errors between empirical and theoretical cdf of $V_2$. The upper plot of panel (c) shows the empirical univariate cdf of $V_1$ (in black), together with the asymptotic theoretical values (in gold), in log scale, obtained by numerically  integrating the asymptotic joint cdf \eqref{eq: Joint_V1V2}. The lower plot of panel (c) represents the $L_1$ error between the marginal empirical and theoretical cdf.}\label{fig: SC_comparison}
\end{figure}

According to Theorem \ref{th: Joint_max_of_conc}, since $1-\alpha=\beta$, we have $\tilde{a}_n=a_n=n$. Hence,
\[
\mathcal{L}(nx,ny)={\left(n^{\theta}x^{\theta}+n^{\theta}y^{\theta}-1\right)}^{-1/\theta}n{(xy)}^{1/2}\nto{\left(x^{\theta}+y^{\theta}\right)}^{-1/\theta}{(xy)}^{1/2}=\tilde{c}(x,y).
\]
Note that, for each $a>0$, 
\[
    \tilde{c}(ax,ay)={\left({(ax)}^{\theta}+{(ay)}^{\theta}\right)}^{-1/\theta}{(a^2xy)}^{1/2}=\tilde{c}(x,y),
\]
which means that $\tilde{c}(x,y)$ and also $r(x,y):=2^{1/\theta}\tilde{c}(x,y)$ satisfy \eqref{eq: BSV_definition}. Moreover it is straightforward to show that $\tilde{c}(x,y)$ satisfies Condition \eqref{cond: limit of L(n,a_n)} in Theorem \ref{th: Joint_max_of_conc}.
Since it holds
\[
    \frac{\partial}{\partial x}r(x,y)=%
    2^{1/\theta} \frac{\partial}{\partial x}\tilde{c}(x,y)=%
    2^{1/\theta}x^{-1} \tilde{c}(x, y)\left(\frac 12 - {\left(1 + {\left(\frac yx\right)}^{\theta}\right)}^{-1}\right),
\]
applying Lemma \ref{lemma: F2_F3_limit} and Theorem \ref{th: Joint_max_of_conc} gives that
\begin{subequations}
    \begin{align}
H_1(y\mid x)
&=1-{\left(1+{\left(\frac{y}{x}\right)}^{\theta}\right)}^{-1/\theta},\label{eq: SClayton_H1}\\
H_2(y\mid x)&=%
\left(1-{\left(1+{\left(\frac{y}{x}\right)}^{\theta}\right)}^{-1/\theta-1}\right)%
\e^{-\frac{1}{y}+{\left(1+{\left(\frac{y}{x}\right)}^{\theta}\right)}^{-1/\theta}}\label{eq: SClayton_H2}.
    \end{align}
\end{subequations}
The plot in Figure~\ref{subfig: SC_comparison_surface} shows the bivariate surface computed by applying Theorem \ref{th: Joint_max_of_conc} (the integration is performed numerically) and the values of the bivariate empirical cdf~computed in the simulated $V_1$ and $V_2$.
These points lie on the surface, thus displaying the very good quality of the asymptotic approximation of Theorem \ref{th: Joint_max_of_conc}. It can also be assessed from Figures~\ref{subfig: SC_comparison_boxplot} and \ref{subfig: SC_comparison_margins}, showing the boxplot of the absolute value of the errors between the black cloud and the surface in Figure~\ref{subfig: SC_comparison_surface}, the asymptotic approximation of the marginal distributions of $V_1$ and the errors in $V_2$, respectively. 
For what concerns the error (lower plot of Figure~\ref{subfig: SC_comparison_margins}), we observe that it is small for low values of $v_2$ (the maximum error is close to $1\%$) and converges very quickly to $0$. In particular, it is $0\%$ for all the values of $v_2$ we are considering.

\subsection{Unit Fréchet marginals with Logistic Extremal copula}\label{subsec: counterex_JN95_Logistic}

Let $(X,Y)$ be a random vector with unit Fréchet marginals and a bivariate symmetric logistic extremal dependence, \emph{i.e.}
\[
    F(x,y)=\e^{-V(x,y)},\]
where
\[
    V(x,y)={\left( x^{-1/\gamma} + y^{-1/\gamma}\right)}^\gamma,\,x,y>0,\,\gamma\in(0,1).
\]
By considering the bivariate joint tail approximation~\eqref{eq: Ledford98_joint-tail} proposed in \cite{Ledford1997} characterised by a general extremal dependence function $V$, we can write in our case
\begin{equation}
    \label{eq: L_Logistic}
    \calL(x,y) = {(xy)}^{-1/2}{(x+y)}-{(xy)}^{1/2}V(x,y).
\end{equation}
\begin{figure}[t]
    \centering
    \subfloat[][\emph{Bivariate distribution of $(V_1,V_2)$.}\label{subfig: Logistic_comparison_surface}]{\includegraphics[trim={5cm 1cm 4cm 2.5cm}, clip, width = 0.37\linewidth]{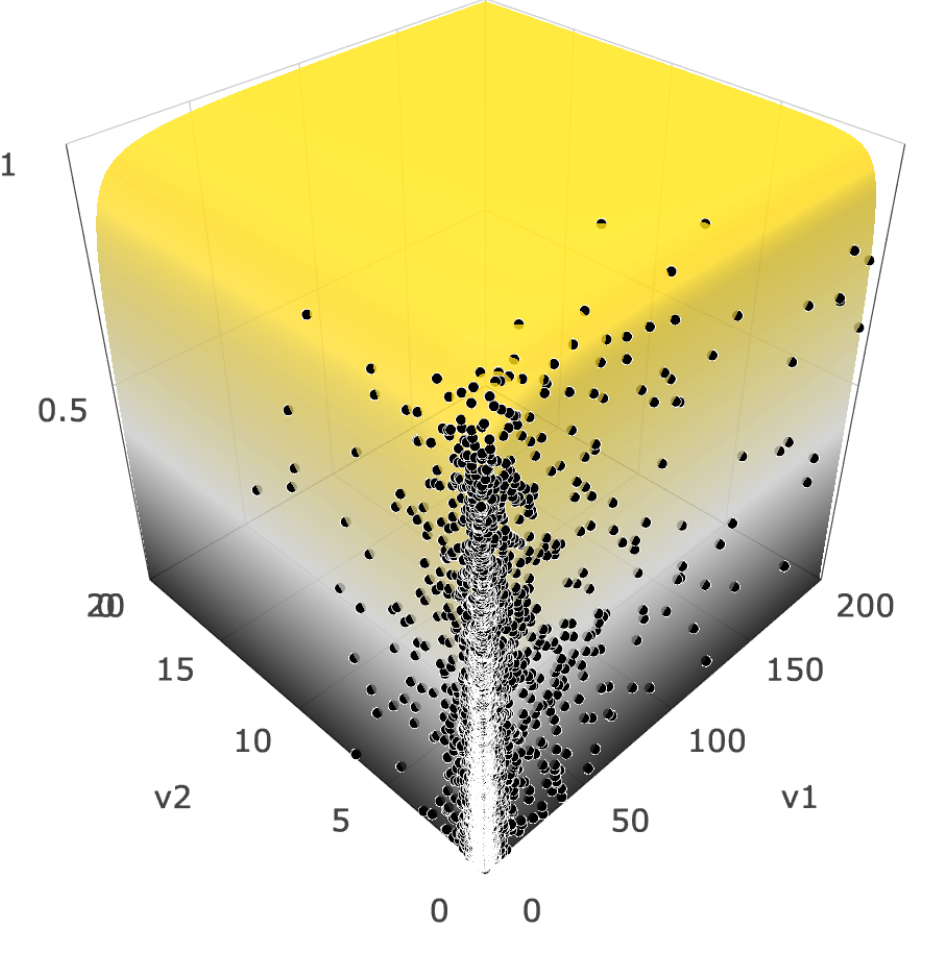}}\quad
    \subfloat[][\emph{Error evaluation.}\label{subfig: Logistic_comparison_boxplot}]{\includegraphics[trim={5cm 0cm 5cm 0cm}, clip, width = 0.168\linewidth]{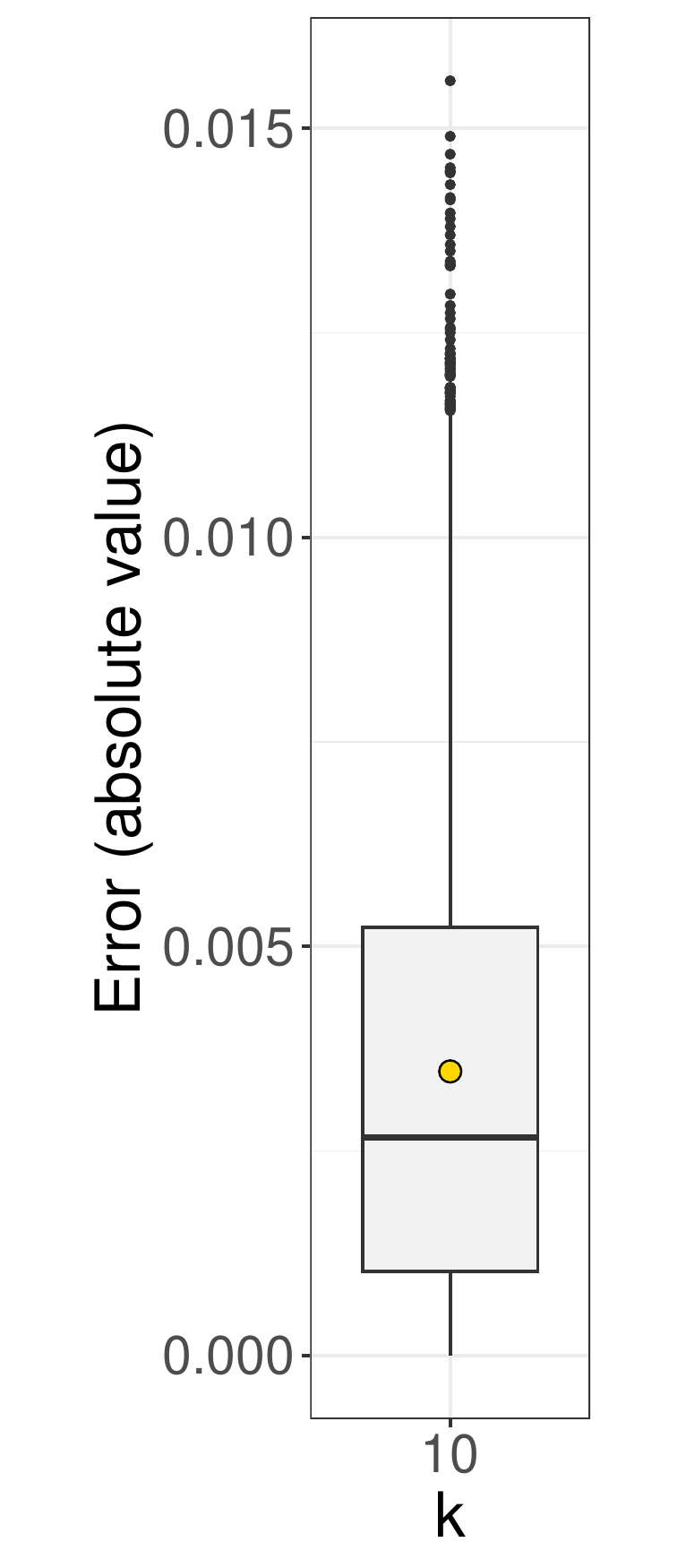}}
    \quad
    \subfloat[][\emph{Marginal distributions diagnostics.}\label{subfig: Logistic_comparison_marginals}]{\includegraphics[width = 0.39\linewidth]{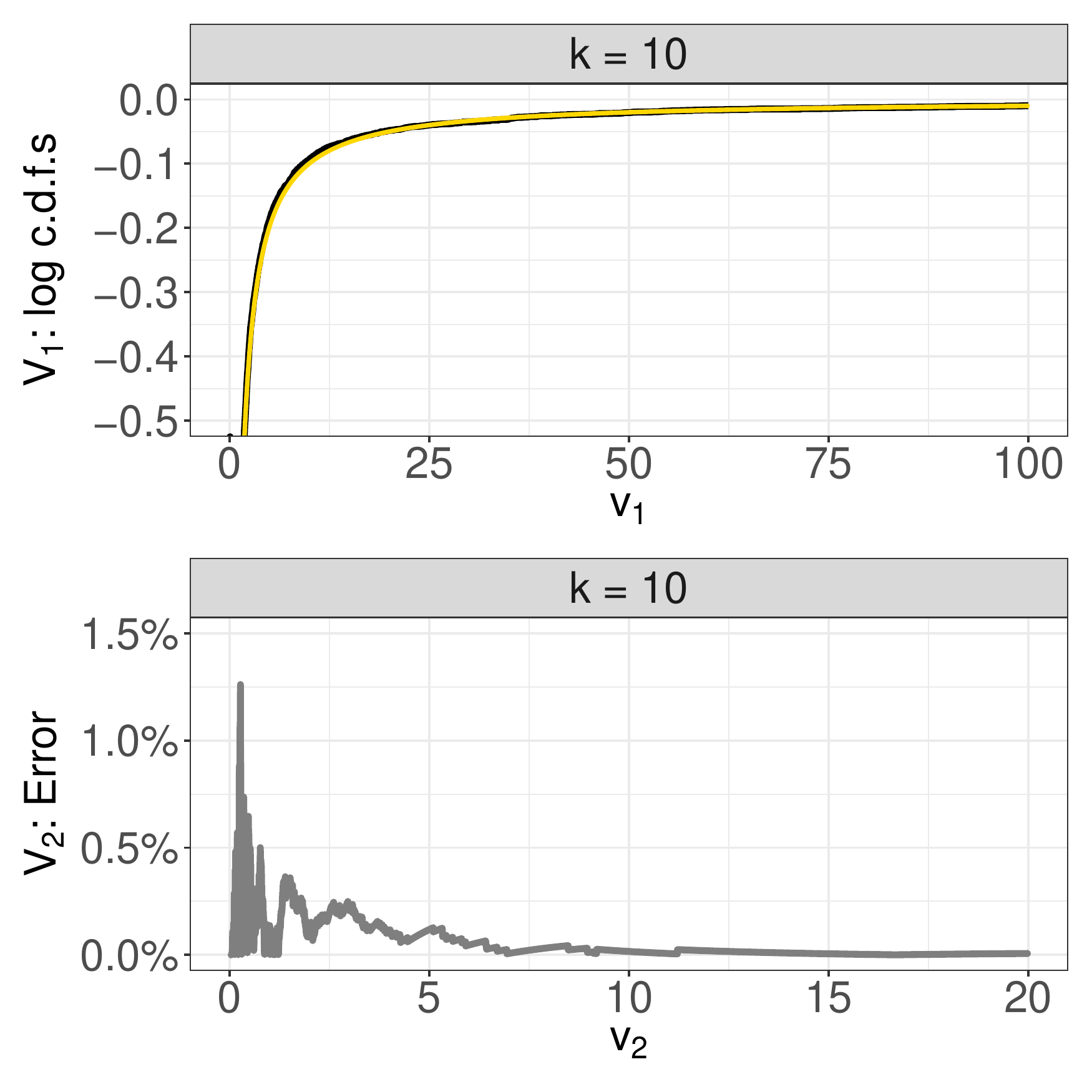}}
    \caption{Logistic$(\gamma = 1/2)$, with $k = 10$. Panel (a) shows the empirical univariate log cdf~of $V_1$ (in black), together with the asymptotic theoretical log values (in gold) obtained by numerically integrating the asymptotic joint cdf~\eqref{eq: Joint_V1V2} to obtain the marginal one. Panel (b) shows the absolute values of the errors between the empirical and the theoretical cdf of $V_2$. The upper plot of panel (c) shows the empirical univariate cdf of $V_1$ (in black), together with the asymptotic theoretical values (in gold), in log scale, obtained by numerically  integrating the asymptotic joint cdf~\eqref{eq: Joint_V1V2}. The lower plot of panel (c) represents the $L_1$ error between the marginal empirical and theoretical cdf.}\label{fig: Logistic__comparison}
\end{figure}
Note that
\[
\mathcal{L}(n,n)=2-V(1,1),
\]
since $V$ is homogeneous of order $-1$ (\emph{i.e.}~$V(nx,ny)=n^{-1}V(x,y)$).
Moreover, $1-\alpha=\beta$, so that $\tilde{a}_n=a_n=n$. Hence, we can write
\[
\begin{split}
\mathcal{L}(nx,ny)&=n^{-1}{(xy)}^{-1/2} n(x+y) - n{(xy)}^{1/2}n^{-1}V(x,y)\\
&={(xy)}^{-1/2}{(x+y)}-{(xy)}^{1/2}V(x,y)=\tilde{c}(x,y).
\end{split}
\]
Note that, for each $a>0$, 
\[
    \tilde{c}(ax,ay)=\tilde{c}(x,y),
\]
so that the candidate $\calL$ in \eqref{eq: L_Logistic} is a proper BSV function. Simple algebra helps to prove that $\tilde{c}(x,y)$ satisfies Condition \eqref{cond: limit of L(n,a_n)} in Theorem \ref{th: Joint_max_of_conc}.
Since it holds
\[
    \frac{\partial}{\partial x}\tilde{c}(x,y)=%
    \frac{{(xy)}^{-1/2}}{2}\left(1 - \frac yx - yV(x,y)\left(1 - 2{(xV(x,y))}^{-1/\gamma}\right)\right),
\]
by applying Lemma \ref{lemma: F2_F3_limit} and Theorem \ref{th: Joint_max_of_conc}, we obtain
\begin{subequations}
    \begin{align}
H_1(y\mid x)
&= x\left( V(x,y) -y^{-1}\right),\label{eq: Extremal_Logistic_H1}\\
H_2(y\mid x)&=%
{\left(1+{\left(\frac{y}{x}\right)}^{- \frac{1}{\gamma}}\right)}^{\gamma-1}%
\e^{-\frac 1x\left({\left(1+{\left(\frac{y}{x}\right)}^{-\frac{1}{\gamma}}\right)}^\gamma-1\right)}\label{eq: Extremal_Logistic_H2}.
    \end{align}
\end{subequations}
After having obtained the functions $H_1$ and $H_2$, we can now apply a numerical integration to compute the joint cdf~\eqref{eq: Joint_V1V2}.
%
\section{Discussion on the Gaussian case}\label{sec: Gaussian case}
Whilst our focus so far was on obtaining the joint asymptotic cdf of $(V_1,V_2)$ when $(X,Y)$ exhibits asymptotic or extremal dependence, we question if Condition~\eqref{cond: limit of L(n,a_n)} of Theorem~\ref{th: Joint_max_of_conc} is strongly related to the extremal dependence assumption.
In other words, what does occur if the BSV function $\calL$ goes to $0$ or to infinity? This is why we return to the case when $(X,Y)$ is Gaussian, as it is well known that $(X,Y)$ is then asymptotically independent.
This case has already been treated in the literature, due both to practical interest of the distribution and to its known manageability. Although direct computations in this case become simple, problems arise in applying models involving approximations with SV functions. In fact, even if such approximations are provided, they are then not used to compute for instance the conditional distributions we encountered above (as far as we could observe in the literature). The objective of the present section is to question this apparent paradox and provide tools to overcome this issue, thus bridging the gap between direct computations and those made through asymptotic models, with some attention put on the asymptotic behaviour of the BSV $\calL$.
%
\subsection{Existing literature}
%
Let us start to review the existing literature on the Gaussian case.
\begin{enumerate}[leftmargin=*, label=(\alph*)]
    \item \cite{Joshi1995}: By direct computations, the authors are able to provide constants $\tlan,\,\tlbn$ such that the rescaled distribution of $(V_1,V_2)$ converges to a product of a standard normal cdf to the power $k$ and a Gumbel cdf. When applying those constants in our case, after the transformation of the Gaussian marginals into unit Fréchet, Condition~\eqref{cond: limit of L(n,a_n)} in Theorem~\ref{th: Joint_max_of_conc} is not satisfied. In fact, we obtain that $F_1$ degenerates for $n\to\infty$.
    
    Hence, our question: Could we find other transformations of the marginals that let us circumvent the case  $\calL\to 0$, and possibly find a new method or solution to handle this case?
    The relevance of this problem lies in the fact that relations between asymptotic dependence and the behavior of $\calL$ are generally unknown:
    We know by formula \eqref{eq: relation_lambda_U_eta} that the behaviour of $\calL(n,n)$ and the values $\eta$ takes are linked to the upper-tail dependence coefficient $\lambda_u$, but the case $\calL\not\to 0$ and $\eta<1$ is not tackled. Moreover, we also need to consider $\calL(n,\tilde a_n)$ instead of $\calL(n,n)$.
    \item \cite{Ledford1998}: The focus here is only on the asymptotic distribution of the concomitant of the maximum $X_{(n)}$, that is the asymptotic distribution of $V_1$ when $k=1$ (in our notation), meaning to study $F_1$. The authors explain that there can be cases where unit Fréchet marginals are not appropriate and suggest a standard Gumbel transformation for the variable $Y$, arriving to a new BSV function $\calL_1$ (in fact the first order approximation of $\calL$) that avoids the degeneracy of the limit of $H_1$ (with our notations). However, to compare this approach with the case where both the marginals are unit Fréchet, they need to assume $y\to\infty$.
    \end{enumerate}

Let $(X_N,Y_N)$ be a bivariate normal random vector with standard normal marginals and with correlation $0<\rho<1$. Theorem 3 in \cite{Joshi1995} states that, in the extreme case ($k$ fixed, $n\to\infty$), it holds that
\begin{equation}\label{eq: JN95_Theorem_3_joint_V1V2_Gaussian_extreme}
\Pro{\frac{V_1-\tilde{a}_n}{\tilde{b}_n}\leq v_1,\frac{V_2-a_n}{b_n}\leq v_2}\underset{n\to\infty}{\to} \Phi^k(v_1)\e^{-\e^{-v_2}},
\end{equation}
where 
\begin{subequations}
    \begin{align}
        a_n &= {\left(2\log n\right)}^{-1/2} & b_n &= \sqrt{2\log n} - \frac 12 \frac{\lln}{\sqrt{2\log n}} \label{subeq: Gaussian_a_b}\\
        \tlan &= \sqrt{\omr} & \tlbn &= \rho b_n. \label{subeq: JN95_tilde_a_b}
    \end{align}
\end{subequations}
Formula \eqref{eq: JN95_Theorem_3_joint_V1V2_Gaussian_extreme} represents an asymptotic independence property of $V_1$ and $V_2$, strongly connected to the fact that
\begin{equation}\label{eq: JN95_cdf_F1}
    F_1(\tlan y + \tlbn \mid a_n x + b_n) = \Pro{Y_N \leq \tlan y + \tlbn \mid X_N > a_n x + b_n} \nto \Phi(y).
\end{equation}
Recall that our model in Theorem~\ref{th: Joint_max_of_conc} is not applicable here. In fact, Condition \eqref{cond: limit of L(n,a_n)} is not satisfied as $\calL(n, \tilde{a}_n)\to 0$ for any choice of the normalizing constants.
Thus, further study would be needed in such a case.

A way to look at the problem may be through \cite{Wadsworth2013}. The authors study the effect of letting the components of a bivariate random vector grow at different rates, with the result of stating a new class of regular variation conditions and providing a new characterization of the link between the multivariate tail decay and the considered marginal growth rates.

More precisely, the authors transform the marginals of the original vector into standard exponentials, called $(X_E,Y_E)$, and assume that, for all $(\zeta_1,\zeta_2)\in\R_+^2\setminus\{\bzero\}$,
\begin{equation}\label{cond: WT2013_tail_dep_assumption}
    \Pro{X_E>\zeta_1\log n,Y_E>\zeta_2\log n} \simn L(n;\zeta_1,\zeta_2)n^{-\kappa(\zeta_1,\zeta_2)}
\end{equation}
where $L$ is a univariate SV function in $n$ as $n\to\infty$, and the function $\kappa(\zeta_1,\zeta_2)$ \emph{maps the different marginal growth rates to the joint tail decay rate} with the role of providing information \emph{about the level of dependence between variables at sub-asymptotic levels}.
Assumption \eqref{cond:  WT2013_tail_dep_assumption} is then used to describe models for conditional probabilities of extreme exceedances, \emph{i.e.} 
\begin{equation}\label{cond: WT2013_cond_tail_dep}
    \Pro{X_E>\zeta_1\log n\mid Y_E>\log n} \simn L(n;\zeta_1,1)\e^{(1-\kappa(\zeta_1,\zeta_2))\log n}.
\end{equation}
Studying \eqref{cond: WT2013_cond_tail_dep} is proven by the authors to be equivalent to the search for functions $a^*,\,b^*$ and $h$ such that
\begin{equation}\label{cond: WT2013_abk_cond_tail_dep}
    \lim_{n\to\infty}\Pro{\frac{X_E - b^*(\log n)}{a^*(\log n)}>x\con Y_E>\log n} = \e^{-h(x)}.
\end{equation}
The use of the exponential marginals is helpful to identify the normalizing constants for the bivariate Gaussian example, thus partially filling the gap left open in \cite{Ledford1998}, where, as previously explained, a way to tackle the problem is given in the case of Fréchet-Gumbel marginals, albeit that solution can be used only for some high values of $Y$.
However, this discussion does not yet answer our main  question: Why, when using unit Fréchet marginals, aren't we able to find normalizing constants that lead to a non-degenerate limit for $\calL(nx,\tilde{a}_ny+\tilde{b}_n)$?
Our understanding of the problem is made more complete by the insightful paper by \cite{Heffernan2007}. There, the authors explore the implications of assuming the existence of a
scaling function $c^*_1(\cdot)$, a centering function $c^*_2(\cdot)$, and a non-null Radon measure $\mu$ on Borel subsets of
$[-\infty,\infty]\times (-\infty, \infty]$ such that, for each fixed $y\in E_\xi:=\{y\in\R: 1+\xi y>0\}$ (where $\xi$ is the shape parameter of the extreme value distribution relative to $F_Y$), it holds
\begin{enumerate}[label=(\alph*)]
    \item $\mu([-\infty,x]\times (y,+\infty))$ is a finite and non-degenerate distribution function in $x$,
    \item the limit of the cdf~of the normalized $X$ given that $Y$ is extreme exists and
    \begin{equation}\label{eq: HR07_cond_limit}
        \Pro{\frac{X - c^*_2\circ b^{\leftarrow}(t)}{c^*_1\circ b^{\leftarrow}(t)}\leq x\con Y>t}\underset{t\to y^+}{\to} \mu([-\infty,x]\times (0,+\infty)),
    \end{equation} 
\end{enumerate}
where $y^+$ is the right-end point of $F_Y$.
It is worth noticing the fact that we are free to change the marginal of $Y$ without disturbing the convergence in \eqref{eq: HR07_cond_limit}, but this is not true for the $X$ variable.
In the paper it is proven that, in the case of the bivariate standard Gaussian, it is not possible to obtain a non-degenerate expression for $\mu$ in \eqref{eq: HR07_cond_limit} if the marginals of $X$ and $Y$ are both transformed into the Pareto type, while an asymptotic form is still reachable if the marginals are transformed into exponentials. We refer to \cite[Sec.~7]{Heffernan2007} for a thorough discussion of this aspect.
%
\subsection{Revisiting the Gaussian case}
Now that we have a better and more complete understanding on the issues underlying the treatment made in the literature of the Gaussian case, we provide a way to find suitable constants through the application of model \eqref{cond: WT2013_cond_tail_dep} and formulas \eqref{subeq: Gaussian_a_b} and \eqref{subeq: JN95_tilde_a_b}. We know that, if $(X_N,Y_N)$ is a bivariate normal random vector and correlation $\rho\in(0,1)$,
\begin{equation}\label{eq: JN95_tail_F1}
    \Pro{Y_N > \tlan y + \tlbn \mid X_N > a_n x + b_n} \nto 1-\Phi(y),
\end{equation}
where the constants are given in \eqref{subeq: Gaussian_a_b} and \eqref{subeq: JN95_tilde_a_b}. The following results hold:
\begin{theorem}\label{th: tilde_constants_Gaussian}
    Let $(X_E, Y_E)$ be a bivariate random vector with unit exponential marginals and correlation $\rho\in (0,1)$, such that \eqref{cond: WT2013_cond_tail_dep} holds, with $\zeta_1:=\zeta_1(n,y)$. Then, we have
    \begin{equation}
        \Pro{Y_E > \tilde{a}_{n,E}y + \tilde{b}_{n,E} \mid X_E > \log n} \to \frac{1}{y\sqrt{2\pi}}\e^{-{(\rho y)}^2/2},\quad\text{as } n\to\infty,
    \end{equation}
    where
    \begin{subequations}
        \begin{align}\label{eq: exp tilde a b with JN}
        \tilde{a}_{n,E} &=  \rho\sqrt{\omr}\left(\sqrt{2\log n} - \frac 12 \frac{\lln}{\sqrt{2\log n}}\right)\\
        \text{and\hspace*{0.5cm}}\tilde{b}_{n,E} &= \rho^2\log n - \frac{\rho^2}{2}\lln + \frac{\rho^2}{16}\frac{{(\lln)}^2}{\log n}.
    \end{align}
    \end{subequations}
    \end{theorem}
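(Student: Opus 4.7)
The plan is to invoke the Wadsworth--Tawn representation \eqref{cond: WT2013_cond_tail_dep}, which is assumed to hold, and carry out the asymptotic expansion of its right-hand side specialized to the bivariate Gaussian copula. Specifically, I would set $\zeta_1(n,y):=(\tilde{a}_{n,E}y+\tilde{b}_{n,E})/\log n$ and, using the exchangeability of the Gaussian copula to transpose \eqref{cond: WT2013_cond_tail_dep}, rewrite the probability of interest as
\[
\Pro{Y_E>\tilde{a}_{n,E}y+\tilde{b}_{n,E}\mid X_E>\log n}\sim L(n;1,\zeta_1(n,y))\exp\bigl((1-\kappa(1,\zeta_1(n,y)))\log n\bigr).
\]
Plugging in $\tilde{b}_{n,E}=\rho^2 b_n^2/2$ and $\tilde{a}_{n,E}=\rho\sqrt{1-\rho^2}\,b_n$ together with the expansion $b_n^2=2\log n-\log(4\pi\log n)+(\log(4\pi\log n))^2/(8\log n)$ shows that $\zeta_1(n,y)=\rho^2+\rho\sqrt{1-\rho^2}\,y\sqrt{2/\log n}+O(\log\log n/\log n)\to\rho^2$, so the convergence occurs along the critical ray for the bivariate Gaussian.

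For the Gaussian copula, the Wadsworth--Tawn exponent is $\kappa(\zeta_1,\zeta_2)=(\zeta_1-2\rho\sqrt{\zeta_1\zeta_2}+\zeta_2)/(1-\rho^2)$. I would check that at $\zeta_1=\rho^2$ with $\zeta_2=1$, $\kappa=1$ and $\partial_{\zeta_1}\kappa=0$, with second derivative equal to $1/(2\rho^2(1-\rho^2))$. A second-order Taylor expansion therefore yields $(1-\kappa(1,\zeta_1(n,y)))\log n\to -(\rho y)^2/2$, producing the exponential factor $\e^{-(\rho y)^2/2}$. The slowly-varying prefactor $L(n;1,\zeta_1(n,y))$ is then obtained via a Laplace-type asymptotic for $\Pro{X_N>u_n,Y_N>v_n(y)}$ along the critical ray $v_n(y)/u_n\to\rho$, which after division by $\Pro{X_N>u_n}$ leaves the Mills-ratio prefactor $1/(y\sqrt{2\pi})$.

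The main obstacle is the sharp bookkeeping of sub-logarithmic corrections: because $\zeta_1(n,y)-\rho^2=O(1/\sqrt{\log n})$, the quadratic Taylor term in $\kappa$ is of the critical order $1/\log n$, so the refinement $(\log(4\pi\log n))^2/(8\log n)$ built into $b_n^2$ --- and hence into $\tilde{b}_{n,E}$ --- is indispensable for cancelling spurious drift in the exponent. Verifying that these corrections telescope cleanly through the Taylor expansion, and that the cubic remainder $(\zeta_1-\rho^2)^3\log n=O((\log n)^{-1/2})$ vanishes, so that the limit matches the claimed form $\frac{1}{y\sqrt{2\pi}}\e^{-(\rho y)^2/2}$ exactly, is the technical crux of the argument.
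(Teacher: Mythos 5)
Your strategy coincides with the paper's: both run the probability through the Wadsworth--Tawn representation \eqref{cond: WT2013_cond_tail_dep} specialised to the Gaussian copula, identify $\zeta_1(n,y)=(\tilde{a}_{n,E}y+\tilde{b}_{n,E})/\log n\to\rho^2$, and extract the limit from an expansion of $\kappa(\zeta_1,1)$ together with the slowly varying factor. (The paper additionally \emph{derives} $\tilde a_{n,E}=\tlan\tlbn$ and $\tilde b_{n,E}=\tlbn^2/2$ from the Gaussian-scale constants of \cite{Joshi1995} via $-\log(1-\Phi(\cdot))$ and Mills' ratio; since the theorem states the constants explicitly, plugging them in directly is acceptable.) However, your central Taylor step does not produce the claimed exponent. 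With your own data $\partial^2_{\zeta_1}\kappa(\rho^2,1)=\frac{1}{2\rho^2(1-\rho^2)}$, $\partial_{\zeta_1}\kappa(\rho^2,1)=0$ and $\zeta_1-\rho^2=\rho\sqrt{2(1-\rho^2)/\log n}\,y+\bigO{\log\log n/\log n}$, the quadratic term equals
\[
\tfrac12\cdot\tfrac{1}{2\rho^2(1-\rho^2)}\cdot\tfrac{2\rho^2(1-\rho^2)}{\log n}\,y^2=\tfrac{y^2}{2\log n},
\]
so $(1-\kappa(\zeta_1,1))\log n\to -y^2/2$, not $-(\rho y)^2/2$. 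This cannot be repaired by the sub-logarithmic bookkeeping you defer to the end: the $(\lln)^2/(8\log n)$ refinement in $b_n^2$ only removes cross terms of order $\log\log n/(\log n)^{3/2}$ and cannot alter the coefficient of $y^2$. The sentence ``therefore yields $-(\rho y)^2/2$'' is a non sequitur as written. (For comparison, the paper reaches $\kappa=1+\rho^2y^2/(2\log n)$ by expanding $\sqrt{\zeta_1}$ directly, using the $y^2$-coefficient $-(1-\rho^2)/(4\log n)$ where the binomial expansion of $\sqrt{1+\epsilon}$ gives $-(1-\rho^2)/(4\rho^2\log n)$; the two computations disagree, and at least one must be reconciled before the stated limit can be certified.)

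The second gap is that the prefactor $1/(y\sqrt{2\pi})$ is asserted rather than derived. In the paper this is the bulk of the work: one must expand the explicit slowly varying function \eqref{eq: WT_Gaussian_L1} --- the power of $4\pi\log n$, the power of $\zeta_1$, and especially the vanishing denominator $(\sqrt{\zeta_1}-\rho)(1-\rho\sqrt{\zeta_1})\sim(1-\rho^2)^{3/2}y/\sqrt{2\log n}$ --- and verify that all powers of $\log n$ and of $\lln$ cancel, leaving exactly $1/(y\sqrt{2\pi})$. Your appeal to ``a Laplace-type asymptotic along the critical ray'' names the right mechanism but proves nothing, and it is precisely on this ray that care is required: $\zeta_1-\rho^2=\bigO{(\log n)^{-1/2}}$ sits on the boundary of the region $\zeta_1>\rho^2$ in which \eqref{eq: WT_Gaussian_model} is established, so invoking the representation with an $n$-dependent $\zeta_1$ needs at least a local-uniformity argument, which you do not supply.
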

    \begin{corollary}\label{coro: tilde_constants_Gaussian}
    Note that, if we assume $y$ large enough (as in \cite{Ledford1998}), then we can use the Mill's ratio to obtain
    \begin{equation}\label{eq: tail_Gaussian_approx}
        \Pro{Y_E > \tilde{a}_{n,E}y + \tilde{b}_{n,E} \mid X_E > \log n} \simn \rho\left(1 - \Phi(\rho y)\right).
    \end{equation}
    \end{corollary}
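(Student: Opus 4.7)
The plan is to deduce the corollary directly from Theorem~\ref{th: tilde_constants_Gaussian} by a one-line application of Mill's ratio. By that theorem, with the stated choice of $\tilde{a}_{n,E}$ and $\tilde{b}_{n,E}$, we have
\[
\Pro{Y_E > \tilde{a}_{n,E}y + \tilde{b}_{n,E} \mid X_E > \log n} \underset{n\to\infty}{\to} \frac{1}{y\sqrt{2\pi}}\,\e^{-(\rho y)^2/2},
\]
so the corollary is essentially a restatement of this limit in which the Gaussian density factor is recognised as the leading term of a Gaussian tail probability.

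The key ingredient is the classical Mill's ratio asymptotic
\[
1 - \Phi(z) \underset{z\to\infty}{\sim} \frac{\phi(z)}{z} = \frac{1}{z\sqrt{2\pi}}\,\e^{-z^2/2}.
\]
Setting $z = \rho y$, which tends to infinity under the standing assumption that $y$ is large (as in \cite{Ledford1998}) and $\rho \in (0,1)$ is fixed, we get
\[
1 - \Phi(\rho y) \underset{y\to\infty}{\sim} \frac{1}{\rho y\sqrt{2\pi}}\,\e^{-(\rho y)^2/2},
\]
and multiplying both sides by $\rho$ yields
\[
\rho\bigl(1-\Phi(\rho y)\bigr) \underset{y\to\infty}{\sim} \frac{1}{y\sqrt{2\pi}}\,\e^{-(\rho y)^2/2}.
\]
Combining this with the limit from Theorem~\ref{th: tilde_constants_Gaussian} gives the claimed equivalence \eqref{eq: tail_Gaussian_approx}.

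There is no real obstacle; the only thing to be careful about is the order of limits, since $n\to\infty$ yields the exact Gaussian density expression, and then one lets $y$ be large (or passes through the Mill's ratio asymptotic valid for large $\rho y$) to reexpress it as a tail. This is precisely the ``$y$ large enough'' hypothesis borrowed from \cite{Ledford1998}, so the two-step combination is legitimate and the corollary follows immediately.
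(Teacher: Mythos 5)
Your proposal is correct and follows essentially the same route as the paper: both start from the limit $\frac{1}{y\sqrt{2\pi}}\e^{-(\rho y)^2/2}$ established in Theorem~\ref{th: tilde_constants_Gaussian}, rewrite it as $\rho\,\varphi(\rho y)/(\rho y)$, and invoke Mill's ratio for large $\rho y$ to identify it with $\rho(1-\Phi(\rho y))$. Your remark on the order of limits ($n\to\infty$ first, then $y$ large) is a welcome clarification of the ``$y$ large enough'' hypothesis, but it does not change the argument.
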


Refer to Appendix~\ref{app: proofs} for the proofs.

The limit distribution given in \eqref{eq: tail_Gaussian_approx} is similar to those in \cite{Joshi1995} and \cite{Wadsworth2013}. The differences among the final formulas are due to the different chosen orders of the Taylor approximations.
\section{Discussion}
The work presented has provided a deep understanding of the impact of the extremal dependence of a bivariate random vector $(X,Y)$ on the joint asymptotic behavior of suitably rescaled maxima over subsets of concomitants. Studying the framework of asymptotically dependent random variables is actually of great interest in practical applications and completes the literature.

Our main result addresses this problem, providing the expression of the asymptotic joint cdf of the two rescaled maxima of concomitants.
In particular, we take advantage of the joint-tail model proposed in \cite{Ledford1997} to provide the asymptotic form of all the conditional distributions describing $Y$ given the behaviour of $X$, which contribute to the asymptotic joint cdf of the two maxima of concomitants. We also explain in detail how to choose suitable normalizing constants needed to avoid the degeneracy of such distributions (thus preventing the joint cdf of the maxima of concomitants to be degenerate as well). Our theoretical results are illustrated through two examples of interest in the risk analysis literature: We compute the analytical expressions of the conditional distribution and perform a numerical integration to obtain the joint cdf. We show the goodness of the asymptotic approximation in all the examined cases through a numerical simulation.

Furthermore, we reconsider Theorem 2 in \cite{Joshi1995} and show how some hypotheses are connected to the upper-tail dependence coefficient characterizing the copula of the parent vector $(X,Y)$, thus providing an intuitive interpretation of the formulas therein and a new reformulation of the cited Theorem.

More importantly, we revisit the example of the bivariate Gaussian distribution in our case, furnishing a comprehensive discussion on it. We deeply study the reasons why the joint-tail model in \cite{Ledford1997} fails in providing a non-degenerate asymptotic distribution and fill the gap in the extension of the joint-tail model proposed in \cite{Wadsworth2013}, by showing how to compute suitable normalization constants in this case.
Our results are coherent with those obtained by direct computation in \cite{Joshi1995}.
%
\begin{appendix}
    \section{Appendix - Proofs}\label{app: proofs}
\begin{proof}[Proof of Lemma \ref{lemma: nP to 0 equiv F_3 to 1}]
    Using the notations of Lemma \ref{lemma: nP to 0 equiv F_3 to 1}, and indicating with $\mathscr{D}(G_{\gamma_1})$, $\mathscr{D}(G_{\gamma_2})$, $\mathscr{D}(G_0)$ the MDAs of the Fréchet, the Weibull and the Gumbel distributions, respectively,
    we can write, for $x,y\in\R$ and $x^+$ denoting the upper end-point of the distribution of $X$,
    \[
    \begin{split}
    &n\Pro{X>a_nx+b_n,Y>c_ny+d_n}\\
    &=\int_{a_nx+b_n}^{x^+}n\Pro{Y>c_ny+d_n\mid X=t}f_X(t)\dd{t}\\
    &\leq n\int_{a_nx+b_n}^{x^+} f_X(t)\dd{t}=n(1-F_X(a_nx+b_n))<\infty,\,\forall n\in\N.
    \end{split}
    \]
    As shown in the proof of Proposition 2.5 in \cite{Resnick1987}, it holds
    \[
    na_nf_X(a_nz+b_n)\indic\left(x\leq z<a_n^{-1}(x^+-b_n)\right)
    \nto
    \indic\left(z\geq x\right)g(z)
    \]
    with
    \[
    g(z):=\begin{cases}
    \gamma_1 z^{-(\gamma_1+1)},\, z>0,\,\gamma_1>0, & \text{if } F_X\in \mathscr{D}(G_{\gamma_1}),\\
    \gamma_2 (-z)^{\gamma_2-1},\, z<0,\, \gamma_2>0, & \text{if } F_X\in\mathscr{D}(G_{\gamma_2}),\\
    e^{-z},\,z\in\R, & \text{if } F_X\in\mathscr{D} (G_0).
    \end{cases}
    \]
    
    Note the role of the indicator $\indic\left(x\leq z<a_n^{-1}(x^+-b_n)\right)$, needed to ensure that we are integrating over the support of $X$.
    In order to apply the Extended Dominated Convergence Theorem, we need to check that 
    \[
    \int_\R na_nf_X(a_nz+x^+)\indic\left(z\geq x\right)\dd{z}=n\left(1-F_X(a_nx+b_n)\right)\nto \int_\R g(z)\indic\left(z\geq x\right)\dd{z}.
    \]
    
    For the Gumbel case, we have
    \[
    n\left(1-F_X(a_nx+b_n)\right)\to \e^{-x} \text{ and }\int_\R g(z)\indic\left(z\geq x\right)\dd{z}=\e^{-x}.
    \]
    The Fréchet and Weibull cases follow the same reasoning. Since the two quantities are equal, we can exchange limit and integral, as follows:
    \[
    \begin{split}
    &\lim_{n\to\infty}\int_\R n\Pro{Y>c_ny+d_n\mid X=t}f_X(t)\indic\left(t\geq a_nx+b_n\right)\dd{t}\\
    &=\int_\R\lim_{n\to\infty}\Pro{Y>c_ny+d_n\mid X=a_nz+b_n}\cdot\lim_{n\to\infty}na_nf_X(a_nz+b_n)\\
    &\qquad\qquad\cdot\indic\left(z<a_n^{-1}(x^+-b_n)\right)\cdot\indic\left(z\geq x\right)\dd{z}.
    \end{split}
    \]
    Since the integrand is positive, then the integral tends to $0$ if and only if the latter tends to $0$, that is,
    if and only if 
    \[
    \Pro{Y>c_ny+d_n\mid X=a_nz+b_n}=1-F_3\left(c_ny+d_n\mid a_nz+b_n\right)\nto 0
    \]
    for all $z\geq x$ (note that $\displaystyle{\lim_{n\to\infty}na_nf_X(a_nz+b_n)\neq 0}$). Hence the result.
    \end{proof}
\begin{proof}[Proof of Lemma \ref{lemma: F2_F3_limit}]
    In order to make use of \eqref{eq: Ledford98_joint-tail}, we write all the conditionals in terms of $\bar{F}$ and recall the notation $C_{\not\ind}$ for the case $\alpha + \beta=1\text{ and }\lambda_U>0$.
    For what concerns $F_2(y\mid x)$, we have
    \[
        F_2(y\mid x) =\Pro{Y\leq y\mid X<x} = 1 - \Pro{Y> y\mid X<x} = 1 - \frac{\bar{F}_Y(y) - \bar{F}(x,y)}{F_X(x)}.
    \]
    Since
    \[
        \begin{split}
            &\frac{\bar{F}_Y(ny) - \bar{F}(nx,ny)}{F_X(nx)}= \e^{\frac{1}{nx}}\left( 1 - \e^{-\frac{1}{ny}}-\frac{\calL(nx,ny)}{n^{\alpha+\beta}x^{\alpha}y^{\beta}}\right)\\
            &\simn \frac 1n \left( 1 + \frac 1{nx}\right)\left(\frac 1y - \frac{\calL(nx,ny)}{n^{\alpha+\beta-1}x^{\alpha}y^{\beta}}\right)= \frac 1{ny} \left( 1 + \frac 1{nx}\right)\left(1 - \frac{\calL(nx,ny)}{n^{\alpha+\beta-1}x^{\alpha}y^{\beta-1}}\right)\\
            &\simn \frac 1{y} \left( 1 + \frac 1{nx}\right)\left(1 - \frac{\calL(n,n)}{n^{\alpha+\beta-1}}r(x,ny)x^{-\alpha}y^{1-\beta}\right)\simn \frac 1{ny} \left(1 - \lambda_U r(x,y){\left(\frac yx\right)}^{\alpha}\indic_{C_{\not\ind}}\right),
        \end{split}
    \]
    we obtain
    \[
        F^{n}_{2}(ny\mid nx)= {\left[1 - \left(\frac{\bar{F}_Y(ny)}{F_X(nx)} - \frac{\bar{F}(nx,ny)}{F_X(nx)}\right)\right]}^n
        \simn {\left[1-\frac 1{ny} \left(1 - \lambda_U r(x,y){\left(\frac yx\right)}^{\alpha}\indic_{C_{\not\ind}}\right)\right]}^n.
    \]
Thus 
\[
    F^{n}_{2}(ny\mid nx)\nto \e^{-\frac 1{y} \left(1 - \lambda_U r(x,y){\left(\frac yx\right)}^{\alpha}\indic_{C_{\not\ind}}\right)}.
\]
For $F_3(y\mid x)$, note that
\[
    \begin{split}
        F_3(y\mid x) &= \frac{1}{f_X(x)}\frac{\dpar}{\dpar x}F(x,y)=\frac{1}{f_X(x)}\frac{\dpar}{\dpar x}\left(F_X(x)-\bar{F}(y)+\bar{F}(x,y)\right)\notag\\
        &=1+\frac{1}{f_X(x)}\frac{\dpar}{\dpar x}\bar{F}(x,y)
        = 1+ x^{2 - \alpha}y^{-\beta}\e^{\frac 1x}\left( \frac{\dpar}{\dpar x}\calL(x,y) - \frac{\alpha}{x}\calL(x,y) \right).
    \end{split}
\]
It follows that
\[
\begin{split}
F_3(ny\mid nx)&\simn 1+n^{2-\alpha-\beta}\e^{\frac{1}{nx}}x^{2-\alpha}y^{-\beta}\left( \frac{\dpar}{\dpar (nx)}\calL(nx,ny) - \frac{\alpha}{nx}\calL(nx,ny)\right)\\
&= 1+ \e^{\frac{1}{nx}}x^{2-\alpha}y^{-\beta}n^{2-\alpha-\beta}\left( \frac 1n\frac{\dpar}{\dpar x}\calL(nx,ny) - \frac{\alpha}{nx}\calL(nx,ny)\right)\\
&\simn 1+ \e^{\frac{1}{nx}}n^{1-\alpha-\beta}\calL(n,n)x^{2-\alpha}y^{-\beta}\left( \frac{\dpar}{\dpar x}r(x,y) - \frac{\alpha}{x}r(x,y)\right)\\
&\nto 1+\lambda_U x^{2-\alpha}y^{-\beta}\left( \frac{\dpar}{\dpar x}r(x,y) - \frac{\alpha}{x}r(x,y)\right)\indic_{C_{\not\ind}},
\end{split}
\]
which concludes the proof.
\end{proof}
\begin{proof}[Proof of Theorem \ref{th: Joint_max_of_conc}]
    The proof is based on Lemma~\ref{lemma: F2_F3_limit}, and \cite{Joshi1995}. To begin with, we report the finite-sample bivariate cumulative distribution of $(V_1,V_2)$ (\cite[Equation (2.1)]{Joshi1995}),
    \begin{equation}\label{eq: V1_V2_finite_sample_cdf}
        F_{(V_1,V_2)}(v_1,v_2)=\E{h\left(v_1,v_2,X_{(n-k)}\right)}
    \end{equation}
    where the function $h$ is defined as
    \[
        h(v_1,v_2,x)=F_1^k(v_1\mid x)F^{n-k-1}_2\left(v_2\mid x\right)F_3\left(v_2\mid x\right).
    \]
    The rational under Equation \eqref{eq: V1_V2_finite_sample_cdf} lies in \cite{Kaufman1992}. Conditioned on the event $\{X_{(n-k)}=x\}$, the maxima $V_1$ and $V_2$ have the following behaviour: $V_1$ can be considered as the sample maximum of a random sample of size $k$ from $F_1(\cdot\mid x)$, while $V_2$ behaves like the sample maximum of a sample of size $n-k$, composed by $n-k-1$ i.i.d.~variables from $F_2(\cdot\mid x)$ and a single random variable from $F_3(\cdot\mid x)$.

    Following \cite{Joshi1995}, from Equation \eqref{eq: V1_V2_finite_sample_cdf}, we can write
    \[
    \begin{split}
        F_{(V_1,V_2)}(\tilde{a}_n v_1+\tilde{b}_n,\,c_n v_2+d_n)=\int &F_1(\tilde{a}_n v_1+\tilde{b}_n \mid a_n x+b_n)F^{n-k-1}_2\left(c_n v_2+d_n\mid a_nx+b_n\right)\\
        &F_3\left(c_n v_2+d_n\mid a_nx+b_n\right)a_nf_X(a_nx+b_n)\dd{x}.
    \end{split}
    \]
    Since our goal is to compute the joint asymptotic distribution of $V_1$ and $V_2$ as $n\to\infty$, we refer to \cite{Joshi1995} and \cite{Nagaraja1994} for what concerns the swap between the limit operation and the integral.
    Since $X$ is unit Fréchet distributed, it is clear that the marginal $F_X$ is in the domain of attraction of $G_X(x)=\e^{-1/x}$ and that $g_X(x)=x^{-2}\e^{-1/x}$. We refer to Lemma 1 and Result 1 in \cite{Nagaraja1994}, for the convergence as $n\to\infty$ of the term $a_nf_X(a_nx+b_n)$ to $g_{(k)}(x):= \frac{{(-\log G_X(x))}^k}{k!}g_X(x)$.
    Theorem \ref{th: Joint_max_of_conc} then follows by Condition \eqref{cond: limit of L(n,a_n)} and Lemma \ref{lemma: F2_F3_limit}. 
\end{proof}
\begin{proof}[Proof of Theorem \ref{th: tilde_constants_Gaussian}]
Note that $X_N > a_n x + b_n$ is equivalent to $X_N > b_n$, as $a_n\to 0$ when $n\to\infty$.
Let us now report the model in \cite{Wadsworth2013} in terms of standard normal marginals, namely
\begin{equation}\label{eq: WT_Gaussian_model}
    \begin{split}
      &\Pro{Y_E > \zeta_1(n, y) \log n \mid X_E > \log n} \\&= \Pro{Y_N > \Phi^{-1}\left( 1 - n^{-\zeta_1(n, y)}\right) \con X_N > \Phi^{-1}\left( 1 - n^{-1}\right)}\\
      &\sim L_1\left( n; \zeta_1(n, y), 1\right) n^{1 - \kappa(\zeta_1(n, y), 1)}  
    \end{split}
\end{equation}
where
\begin{align}
    & \kappa(\zeta_1(n, y),1) = \frac{\zeta_1(n, y) + 1 -2\rho \sqrt{\zeta_1(n, y)}}{\omr},\qquad\text{and}\label{eq: WT_Gaussian_kappa}\\
    & L_1(n; \zeta_1(n, y), 1) = {(4\pi\log n)}^{\frac{2\rho^2 - \rho\left( \sqrt{\zeta_1(n,y)} + 1/\sqrt{\zeta_1(n,y)}\right)}{2(\omr)}}
        \frac{{\zeta_1(n,y)}^{\frac{1 - \rho/\sqrt{\zeta_1(n,y)}}{2(\omr)}}{(\omr)}^{3/2}}{(\sqrt{\zeta_1(n,y)}-\rho)(1 - \rho\sqrt{\zeta_1(n,y)})}\label{eq: WT_Gaussian_L1},
\end{align}
whenever $\rho^2<\min\{\zeta_1(n, y), 1\}$. The authors explain that the SV function $L$, which we have in the case of the standard exponential marginals (from Equation~\eqref{cond: WT2013_cond_tail_dep}), is such that $L/L_1\sim 1$ as $n\to\infty$.

To ease the notation, in the following, we use $\zeta_1$ for $\zeta_1(n,y)$.
We want to find some constants $\tilde{a}_{n,E}$ and $\tilde{b}_{n,E}$ such that $\{Y_E > \zeta_1(n, y) \log n\}$ is equivalent to $\{Y_E > \tilde{a}_{n,E}y + \tilde{b}_{n,E}\}$, so we set
\begin{equation}\label{eq: equiv beta - exp a,b tilde}
    \zeta_1 = \frac{\tilde{a}_{n,E}y + \tilde{b}_{n,E}}{\log n}.
\end{equation}
By using the first equality in \eqref{eq:  WT_Gaussian_model} and knowing that if $Y_N\sim\mathcal{N}(0,1)$ then $-\log\left( 1 - \Phi(Y_N)\right) \sim \text{Exp}(1)$, it follows that
\begin{equation}\label{eq: equiv beta - normal a,b tilde}
    \zeta_1 = \frac{-\log\left( 1 - \Phi(\tilde{a}_{n}y + \tilde{b}_{n}\right)}{\log n}.
\end{equation}
This means that we can rewrite \eqref{eq: JN95_tail_F1} by transforming the marginals into standard exponential and then apply the model \eqref{eq: WT_Gaussian_model}. Combining \eqref{eq: equiv beta - exp a,b tilde} and \eqref{eq: equiv beta - normal a,b tilde}, we obtain
\[
    \begin{split}
        \tilde{a}_{n,E}y + \tilde{b}_{n,E} &= -\log\left( 1 - \Phi(\tilde{a}_{n}y + \tilde{b}_{n}\right)\\
        &\simn -\log\left( \frac{\phi(\tilde{a}_{n}y + \tilde{b}_{n})}{\tilde{a}_{n}y + \tilde{b}_{n}}\right) \text{ through the Mill's ratio, if $\tlan\to\infty$ or $\tlbn\to\infty$}\\
        &= -\log\left( \frac{1}{\sqrt{2\pi}} \e^{-{(\tlan y + \tlbn)}^2/2}\right) + \log\left(\tilde{a}_{n}y + \tilde{b}_{n}\right)\\
        &= \frac{1}{2}\log(2\pi) + \frac{1}{2} \tlan^2 y^2 + \tlan\tlbn y + \frac{1}{2}\tlbn^2 + \log\tlbn + \log\left(1 + \frac{\tilde{a}_{n}}{\tilde{b}_{n}} y\right)\\
        &\simn \frac{1}{2}\log(2\pi) + \frac{1}{2} \tlan^2 y^2 + \tlan\tlbn y + \frac{1}{2}\tlbn^2 + \log\tlbn + \frac{\tilde{a}_{n}}{\tilde{b}_{n}}y \quad\text{if $\tlan = \smallo{\tlbn}$}\\
        &=\tlan\left(\tlbn + \frac 1\tlbn\right)y + \frac{1}{2}\log(2\pi) + \frac{1}{2} \tlan^2 y^2 + \frac{1}{2}\tlbn^2 + \log\tlbn\\
        &\simn \tlan\tlbn y + \frac{1}{2}\tlbn^2 \quad\text{(the discarded terms are $\smallo{\tlan\tlbn}$)}.
    \end{split}
\]
So, we can identify the terms, which gives
\[
    \tilde{a}_{n,E} =  \tlan\tlbn, \qquad \tilde{b}_{n,E} = \frac{1}{2}\tlbn^2.
\]
Note that, up to now, we have not used the constants in \eqref{subeq: JN95_tilde_a_b}, but found some general conditions on the their growth as $n$ increases.
By using \eqref{subeq: JN95_tilde_a_b}, we have
\begin{subequations}
    \begin{align*}
    \tilde{a}_{n,E} &=  \rho\sqrt{\omr}\left(\sqrt{2\log n} - \frac 12 \frac{\lln}{\sqrt{2\log n}}\right)\\
    \tilde{b}_{n,E} &= \rho^2\log n - \frac{\rho^2}{2}\lln + \frac{\rho^2}{16}\frac{{(\lln)}^2}{\log n}.
\end{align*}
\end{subequations}
Replacing the constants in \eqref{eq: equiv beta - exp a,b tilde} it yields
\begin{equation}\label{eq: expression of beta}
    \zeta_1 = \rho^2\left( 1 + \frac 1\rho \sqln y - \frac 12 \llln + \smallo{{(\log n)}^{-1}}\right).
\end{equation}
Now, notice that with $\zeta_1$ defined in \eqref{eq: expression of beta}, the condition $\rho^2<\min\{\zeta_1(n, y), 1\}$ is always satisfied for large $n$ and for $\rho\in(0,1)$, as it yields $\rho^2<\rho^2 + \bigO{{(\log n)}^{-1/2}}$.
Hence, since
%
\[
    \begin{split}
    \sqrt{\zeta_1} &= \rho{\left( 1 + \frac 1\rho \sqln y - \frac 12 \llln + \smallo{{(\log n)}^{-1}}\right)}^{1/2}\\
    &\simn \rho\left( 1 + \frac 1{2\rho} \sqln y - \frac 14 \llln  - \frac{\omr}{4\log n}y^2+ \smallo{{(\log n)}^{-1}}\right),
    \end{split}
\]
the expression of $\kappa(\zeta_1,1)$ becomes
\[
    \kappa(\zeta_1, 1) = 1 + \frac{\rho^2}{2\log n}y^2 + \smallo{{(\log n)}^{-1}}.
\]
By using these expressions into Equation~\eqref{eq:  WT_Gaussian_model}, we have
\[\begin{split}
    &\Pro{Y_E > \zeta_1(n, y) \log n \mid X_E > \log n} \\
      &\simn \frac{\exp\left\lbrace\frac{2\rho^2 - \rho\left( \sqrt{\zeta_1} + 1/\sqrt{\zeta_1}\right)}{2(\omr)}\lln +\frac{1 - \rho/\sqrt{\zeta_1}}{2(\omr) }\log\zeta_1+ (1 - \kappa(\zeta_1,1))\log n\right\rbrace}{{(\omr)}^{-3/2}(\sqrt{\zeta_1}-\rho)(1 - \rho\sqrt{\zeta_1})}.
    \end{split}
\]
Since
\[
\begin{split}
    \left( 1 - \frac{\rho}{\sqrt{\zeta_1}}\right)\log \zeta_1 \simn& \frac{\log \rho}{\rho}\sqln y + \frac{1}{2 \rho^2} \frac{2\left(1-\rho^2\right)}{\log n} y^2\\
    & - \frac{\log \rho}{2} \llln-\frac{1-\rho^2}{2} \log \rho \frac{y^2}{\log n}+ \smallo{{(\log n)}^{-1}}\\
    =& \frac{\log \rho}{\rho}\sqln y - \frac {\log\rho}2 \llln\\
     &+ \left(\log\rho - \frac 2{\rho^2}\right)\frac{\omr}{2\log n}y^2+ \smallo{{(\log n)}^{-1}},\\
    \end{split}
    \]
    \[
\begin{split}
    (\sqrt{\zeta_1}-\rho)(1 - \rho\sqrt{\zeta_1})  \simn& \frac{{(\omr)}^{3/2}}{\sqrt{2log n}}y - \rho\frac{\omr}{4}\llln\\
    &- \rho\frac{1 - \rho^4}{4\log n}y^2+ \smallo{{(\log n)}^{-1}}
\end{split}
\]
and
\[    \begin{split}
    \frac{2\rho^2 - \rho\left( \sqrt{\zeta_1} + 1/\sqrt{\zeta_1}\right)}{2(\omr)}\simn& -\frac{1}{2} + \frac{1}{4\rho}\sqln y- \frac{1}{8}\llln\\
    &- \frac{1 - \rho^2}{8\log n}y^2+ \smallo{{(\log n)}^{-1}},
\end{split}
    \]
where we neglected all the terms characterized by a faster decay with respect to ${(\log n)}^{-1}$, we obtain
\[
    \Pro{Y_E > \zeta_1(n, y) \log n \mid X_E > \log n} \simn \frac{1}{y\sqrt{2\pi}}\e^{-{(\rho y)}^2/2}.
\]
\end{proof}
\begin{proof}[Proof of Corollary~\ref{coro: tilde_constants_Gaussian}]
Notice that
\[
    \Pro{Y_E > \zeta_1(n, y) \log n \mid X_E > \log n} \simn \frac{\rho}{\rho y\sqrt{2\pi}}\e^{-{(\rho y)}^2/2}=\rho\frac{\varphi(\rho y)}{\rho y},
\]
where $\varphi$ denotes the probability density function of the standard normal distribution.
Under the assumption that $y$ is large, it is straightforward to show, by using the Mill's ratio, that
\[
    \Pro{Y_E > \zeta_1(n, y) \log n \mid X_E > \log n} \simn \rho\left(1 - \Phi(\rho y)\right),
\]
which completes the proof.
\end{proof}

\section{Appendix - Discussion on $k$}\label{app: other k}

The present section contains some graphical results of the examples considered in Section \ref{sec: Examples}. We recall that each example runs a simulation of $5000$ replicates of $100000$ bivariate sample $(X,Y)$. 
As a result, we obtain a bivariate sample for $(V_1,V_2)$ of dimension $5000$ (each replicate of the $(X,Y)$ sample gives one couple of $(V_1,V_2)$), for each $k$. The values of $k$ indicating the dimension of the extreme set are fixed to $1, 50, 100, 250, 500, 1000, 1500, 2000$.
However, the figures do not show the results for each of the considered values: This choice is made to foster the readability, as a lot of almost similar plots on a single page may be redundant.
From a quick look at Figures~\ref{fig: SC surfaces} to~\ref{fig: Logistic marginals_V1_V2} for both examples, we can draw the same important conclusions:
\begin{enumerate}[label=(\alph*)]
    \item The range of the entries relative to $V_1$ is much wider than the range of the entries corresponding to $V_2$;
    \item The cloud of black points in the surfaces in Figure~\ref{fig: SC surfaces} and Figure~\ref{fig: Logistic surfaces} becomes closer and closer to the $v_1$ axis as $k$ increases.
\end{enumerate}
This is a clear effect of the positive dependence between $(X,Y)$.
To see this, think about the definition of $(V_1,V_2)$: $V_1$ is the maximum of the concomitants of the largest $k$ order statistics and $V_2$ is the maximum of the remaining set. The clear difference between the ranges of the two samples indicates that the highest concomitants contribute to the values of $V_1$ and the lowest concomitants are responsible of the relatively low values of $V_2$. Moreover, as $k$ increases, almost all the values that $V_2$ takes are lower than $1$, while $V_1$ can assume values which are greater than $100$, for example. Explained differently, high values of concomitants generally correspond to high order statistics of $X$.

It has to be said that the plotting range of the surfaces is chosen in order to enhance the visualization purposes, as extremely wide intervals would have been not suitable to observe the results.
However, there can be points which fall outside the plotting area: Figure~\ref{fig: SC surface k_1000} contains more points in the middle-right part, while Figure~\ref{fig: SC surface k_2000} contains more points in the upper-right part.
This indicates that when $k$ is $1000$, some concomitants high in value do not belong to the extreme set, but when $k$ increases to $2000$, the partition to which they belong changes. In other words, $V_1$ "steals" high values to $V_2$ when $k$ increases.

Plots relative to the marginal distributions support the presented conclusions: There is no noticeable difference among the log cdf shown in Figure~\ref{fig: SC marginals_V1} and Figure~\ref{fig: Logistic marginals_V1}, which means that the sample in $V_1$ does not change so much, while the upper end point of the distribution of $V_2$ appears to shrink towards $0$ (Figure~\ref{fig: SC marginals_V2} and Figure~\ref{fig: Logistic marginals_V2}). This is again an indication of the "stealing" argument: Since high values of concomitants contributing to $V_2$ pass into the extreme set as $k$ increases, only low values of concomitants are left to compute $V_2$, which becomes smaller and smaller. As previously said, this is the effect of the positive dependence between $(X,Y)$.

Finally, the extremely low errors shown in the boxplots in Figure~\ref{subfig: SC boxplots} and Figure~\ref{subfig: Logistic boxplots} ensure that the asymptotic approximation is almost perfect. The same conclusion can be made from the plots sowing the marginal errors in Figure~\ref{fig: SC marginals_V1_V2} and Figure~\ref{fig: Logistic marginals_V1_V2}. Despite some variability for low values of $v_1$ and $v_2$ (which is small anyway), the errors rapidly approach $0$ as $v_1$ and $v_2$ increase, which is important for practical applications as we are usually interested in studying what happens over high thresholds. 
\begin{figure}[h!]
    \centering
    \subfloat[][\emph{$k=1$.}]
    {\includegraphics[trim={5cm 1cm 4cm 2cm}, clip, width=.32\linewidth]{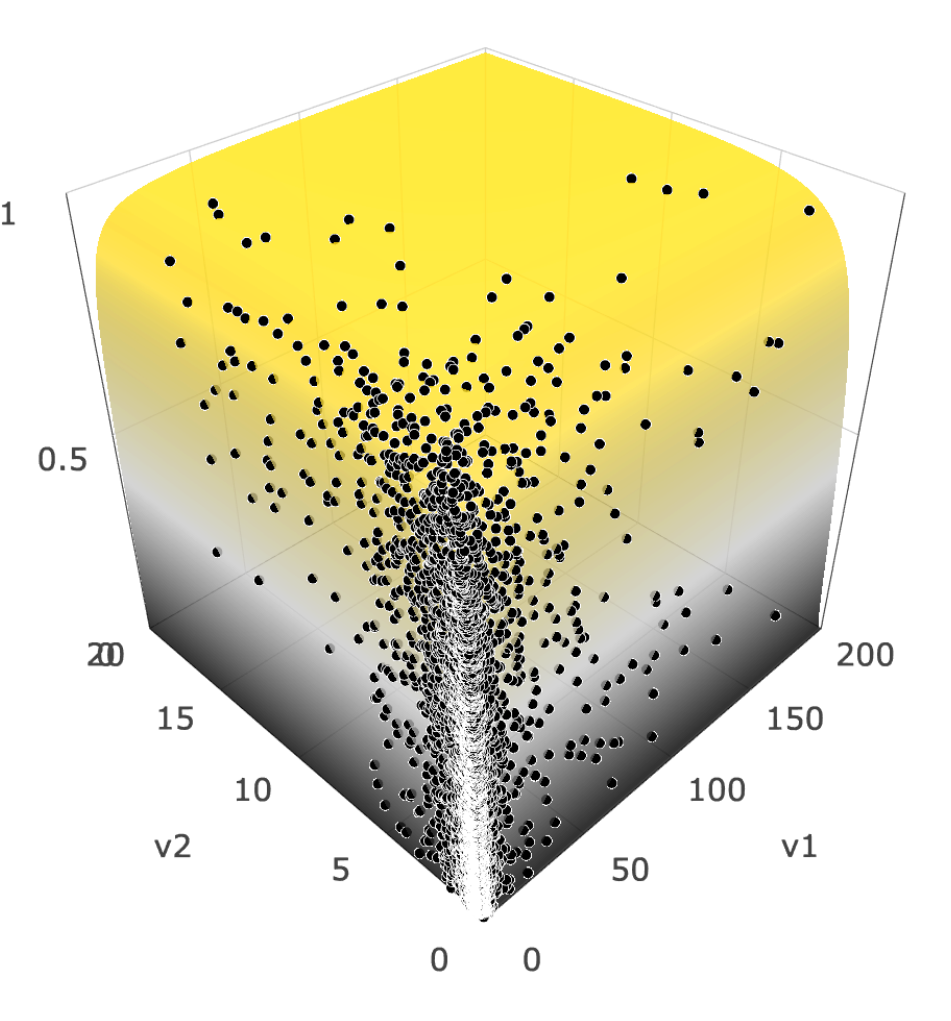}} \\
    \subfloat[][\emph{$k=50$.}]
    {\includegraphics[trim={5cm 1cm 4cm 2cm}, clip, width=.32\linewidth]{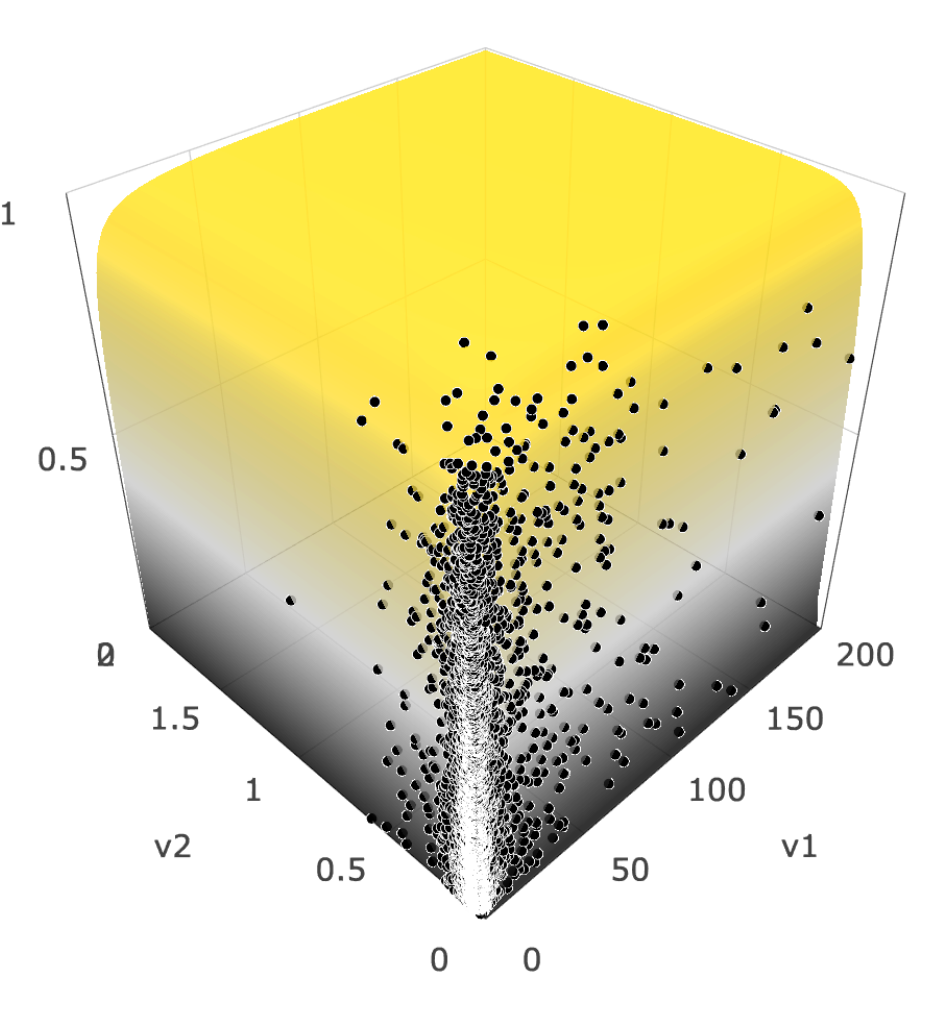}}
    \subfloat[][\emph{$k=100$.}]
    {\includegraphics[trim={5cm 1cm 4cm 2cm}, clip, width=.32\linewidth]{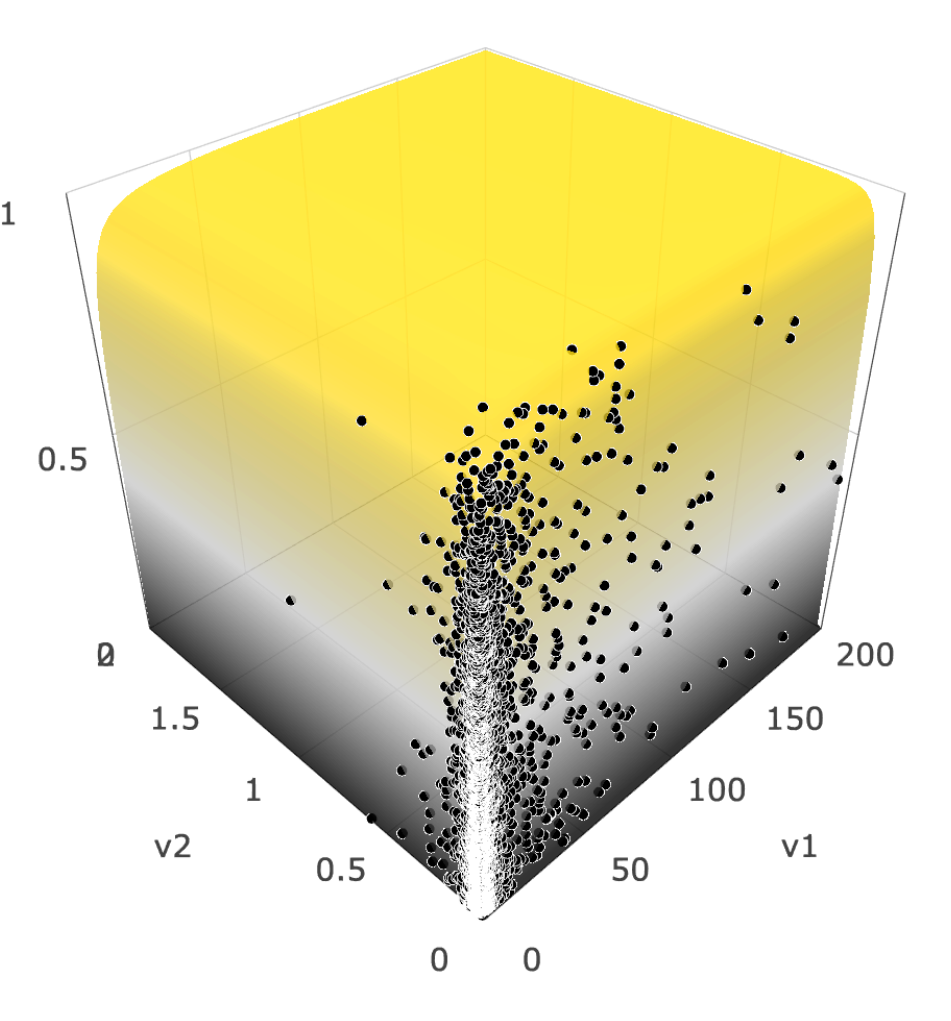}}\\
    \subfloat[][\emph{$k=250$.}]
    {\includegraphics[trim={5cm 1cm 4cm 2cm}, clip, width=.32\linewidth]{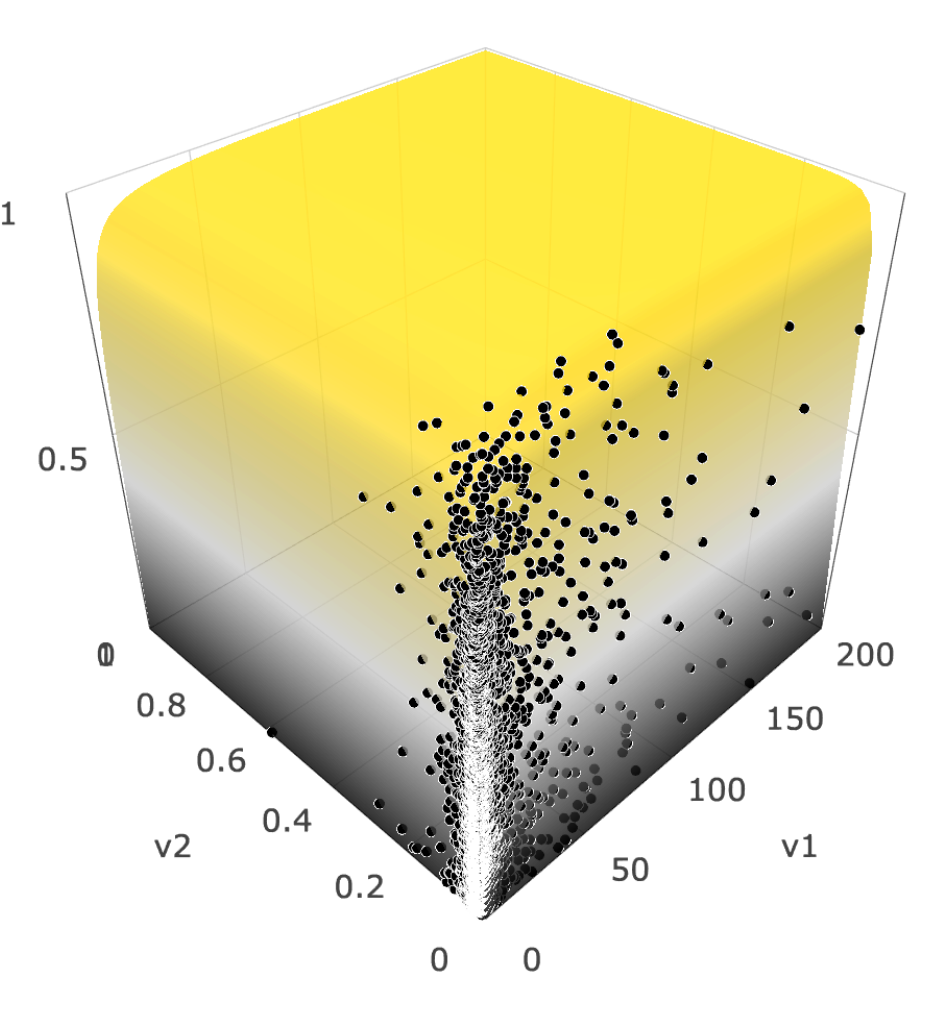}}
    \subfloat[][\emph{$k=1000$.}\label{fig: SC surface k_1000}]
    {\includegraphics[trim={5cm 1cm 4cm 2cm}, clip,width=.32\linewidth]{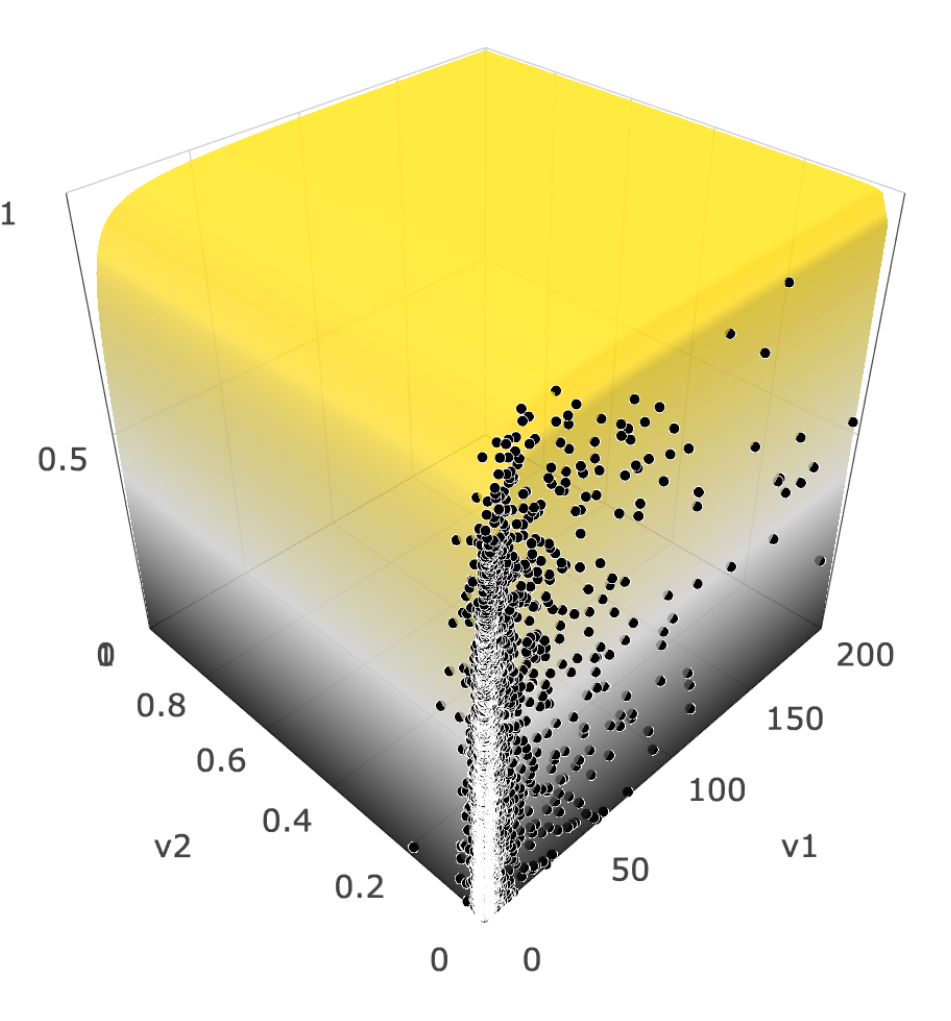}}
    \subfloat[][\emph{$k=2000$.}\label{fig: SC surface k_2000}]
    {\includegraphics[trim={5cm 1cm 4cm 2cm}, clip,width=.32\linewidth]{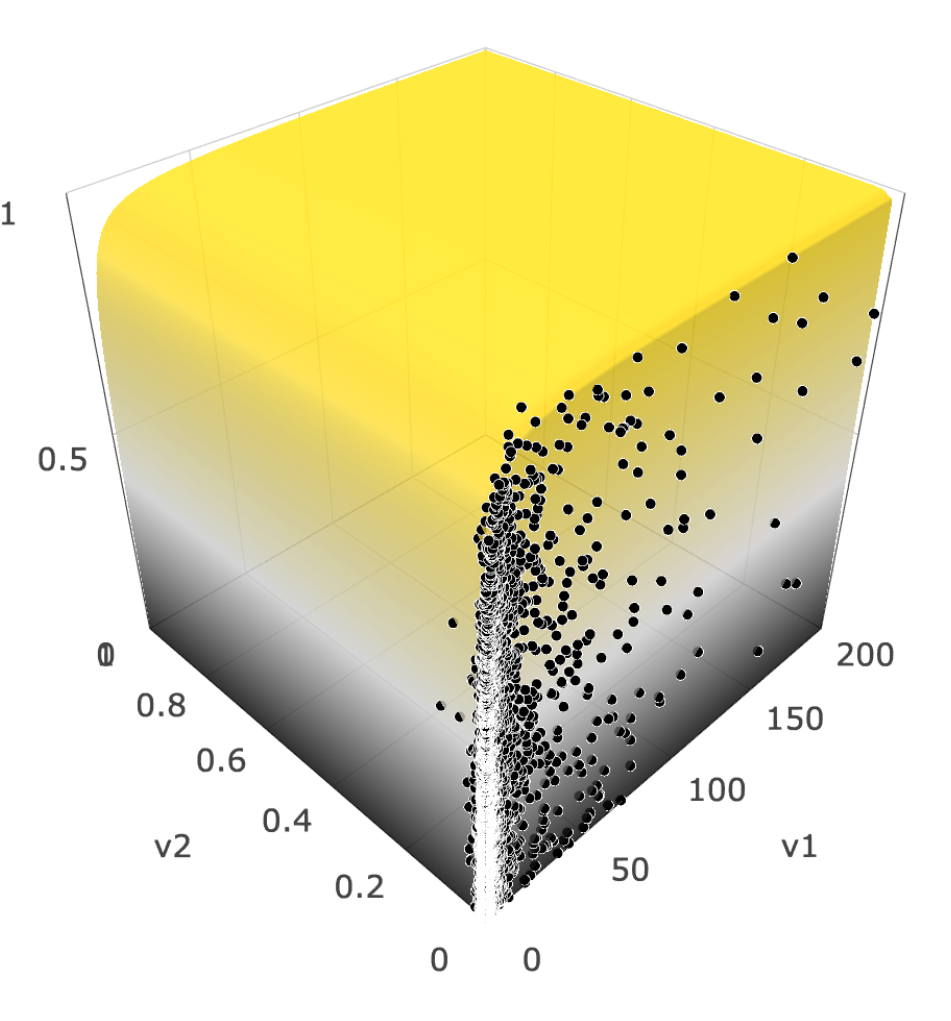}}\\
    \subfloat[][\emph{Boxplots of the errors.}\label{subfig: SC boxplots}]
    {\includegraphics[trim={0 6cm 0 6cm}, clip,width=\linewidth]{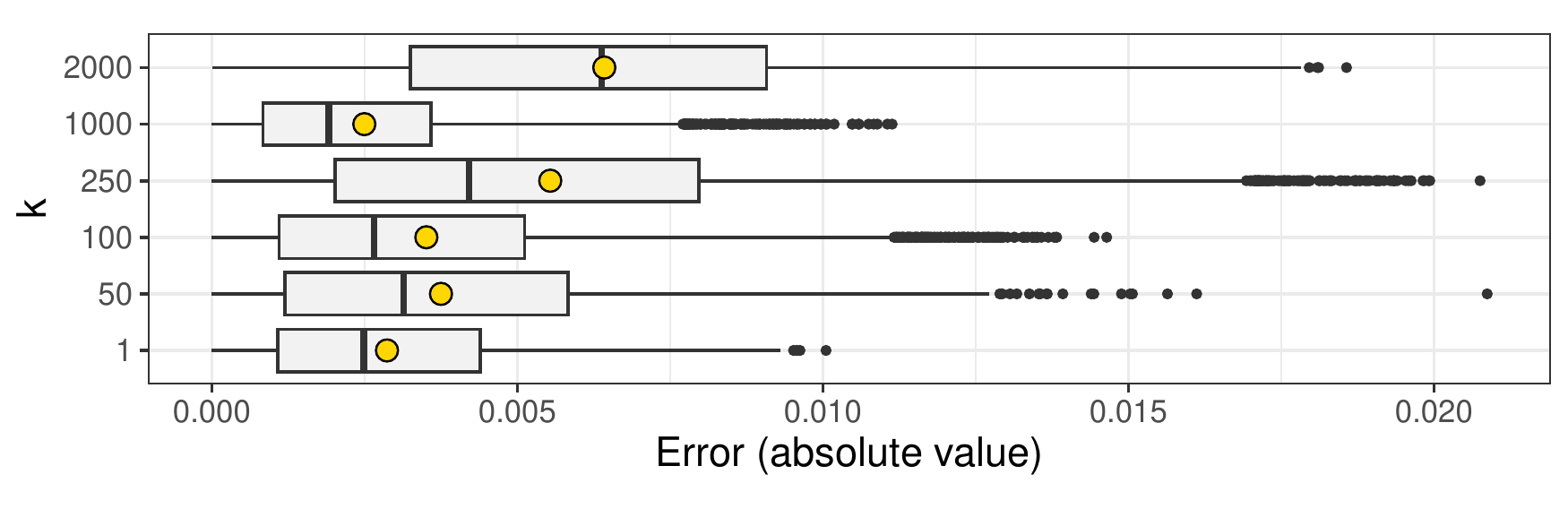}}
    \caption{Survival Clayton$(\theta=2)$. Bivariate cdf~computed by numerically integrating equation \eqref{eq: Joint_V1V2}, for various values of $k$. The black bullets correspond to the bivariate empirical cdf of the simulated samples. Panel g) shows the boxplots of the errors in absolute value between the bivariate asymptotic cdf computed in $(V_{1,i},V_{2,i})$ for $i=1,\dots,5000$ and the corresponding empirical bivariate cdf. The gold dots represent the error means.}\label{fig: SC surfaces}
\end{figure}
\begin{figure}[h!]
    \centering
    \subfloat[][\emph{$V_1$: log marginal cdf comparison and marginal error behaviour with respect to $v_1$.}\label{fig: SC marginals_V1}]
    {\includegraphics[trim={0cm 4cm 0cm 1cm}, clip,width=\linewidth]{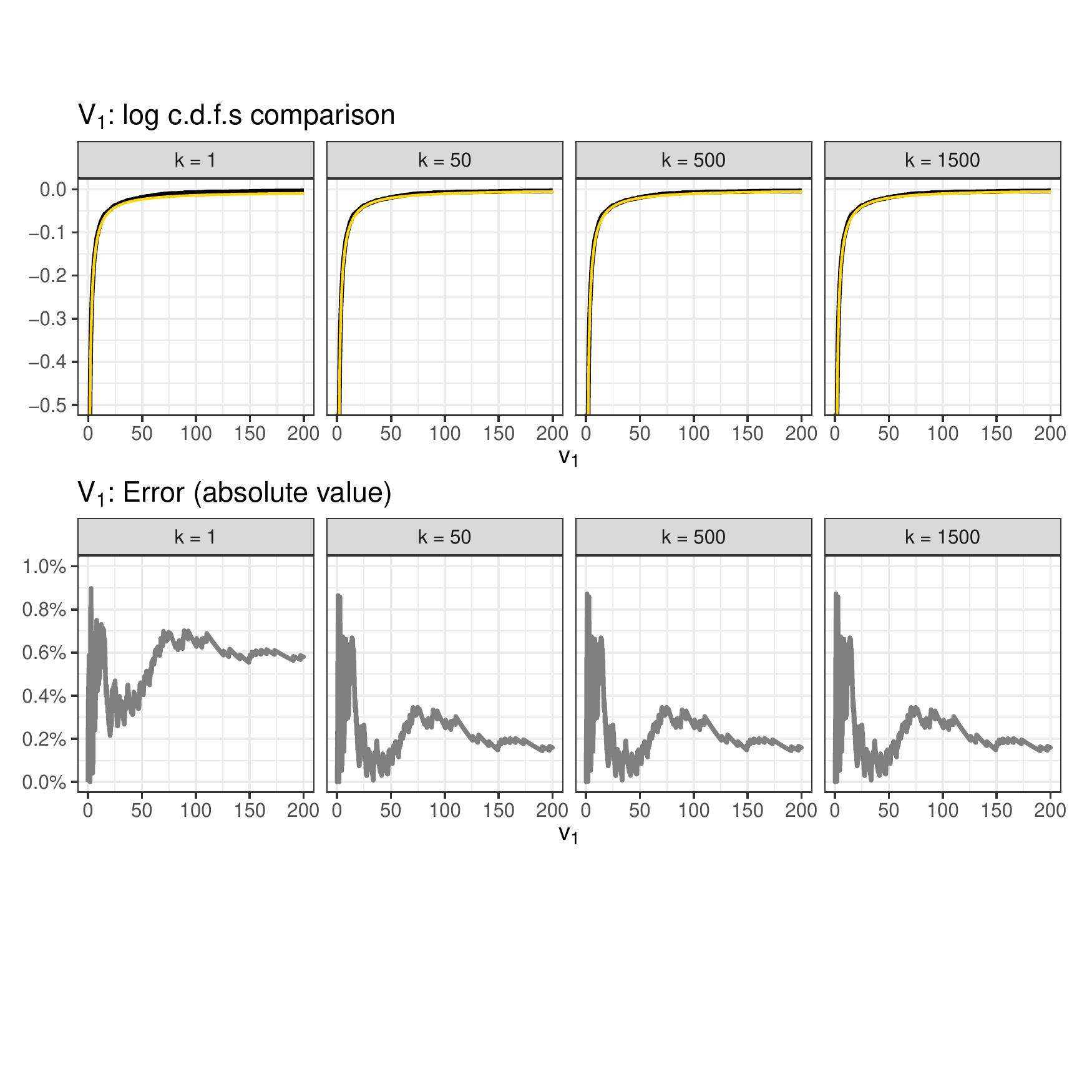}} \\
    \subfloat[][\emph{$V_2$: log marginal cdf comparison and marginal error behaviour with respect to $v_2$.}\label{fig: SC marginals_V2}]
    {\includegraphics[trim={0cm 4cm 0cm 1cm}, clip,width=\linewidth]{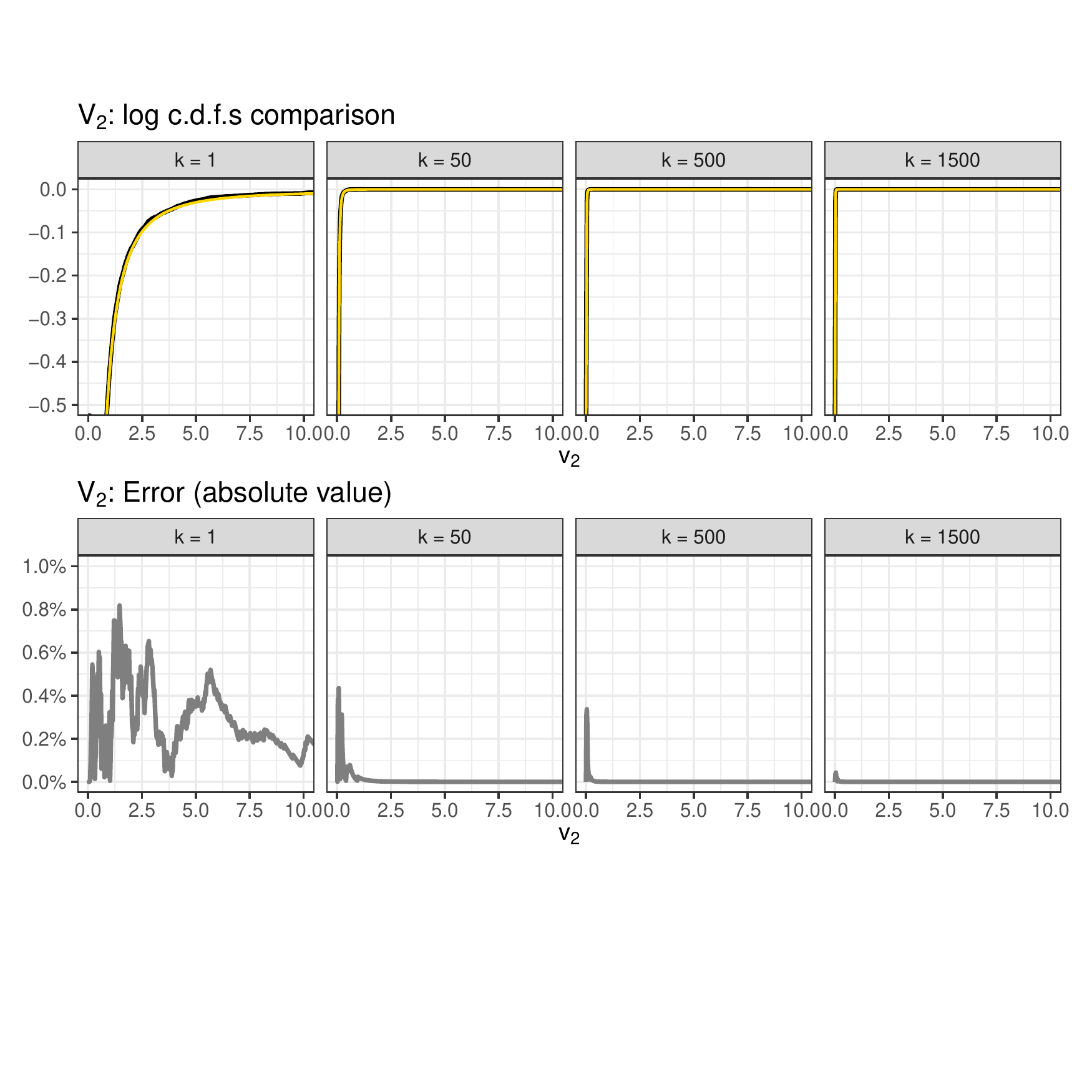}}

    \caption{Survival Clayton$(\theta=2)$. Empirical univariate cdf of $V_1$ and $V_2$, together with the asymptotic theoretical values obtained by numerically  integrating the asymptotic joint cdf~\eqref{eq: Joint_V1V2}.}\label{fig: SC marginals_V1_V2}
\end{figure}

\begin{figure}[h]
    \centering
    \subfloat[][\emph{$k=1$.}]
    {\includegraphics[trim={5cm 1cm 4cm 2cm}, clip, width=.32\linewidth]{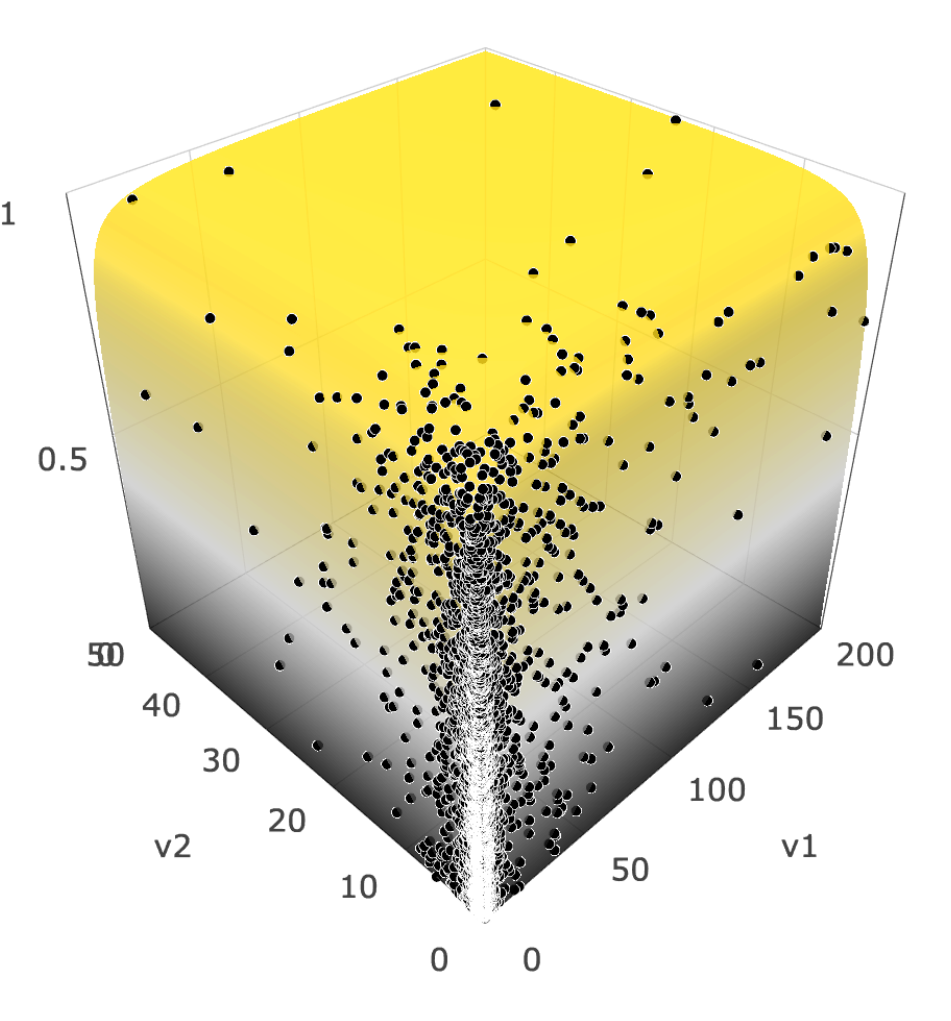}} \\
    \subfloat[][\emph{$k=50$.}]
    {\includegraphics[trim={5cm 1cm 4cm 2cm}, clip, width=.32\linewidth]{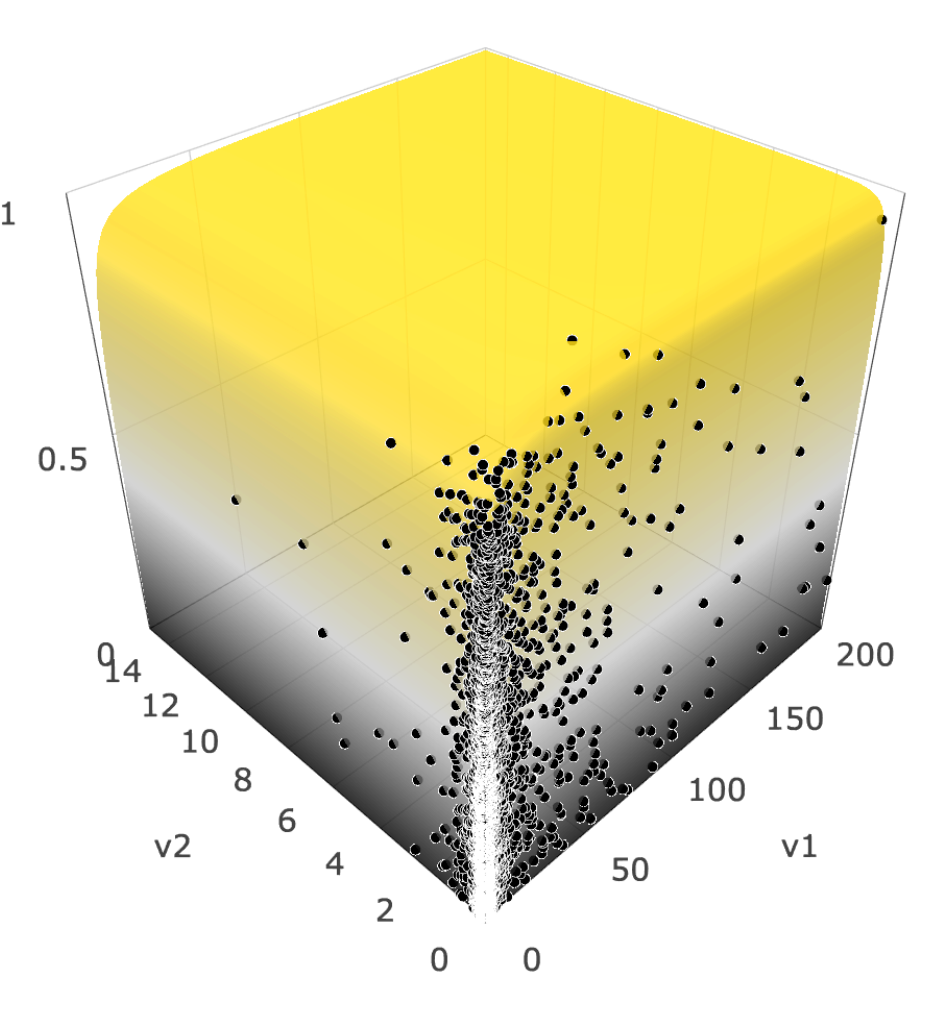}}
    \subfloat[][\emph{$k=100$.}]
    {\includegraphics[trim={5cm 1cm 4cm 2cm}, clip, width=.32\linewidth]{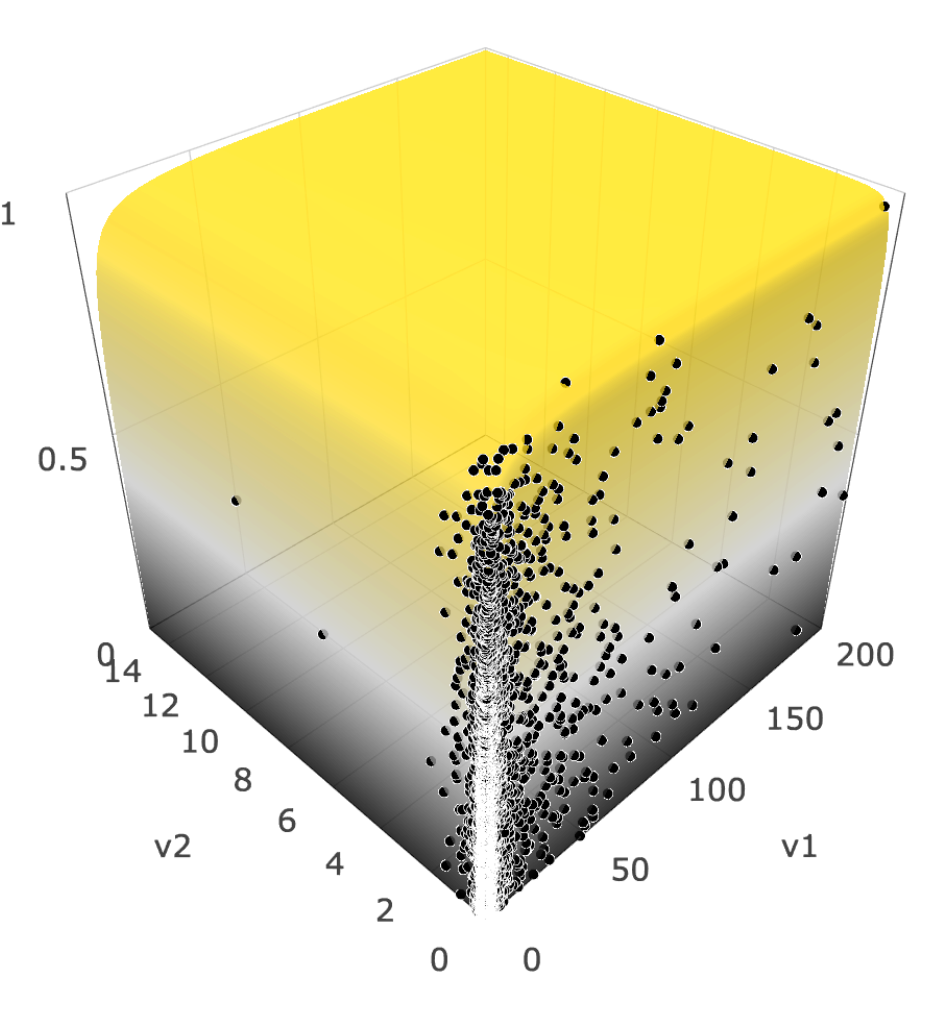}}\\
    \subfloat[][\emph{$k=250$.}]
    {\includegraphics[trim={5cm 1cm 4cm 2cm}, clip, width=.32\linewidth]{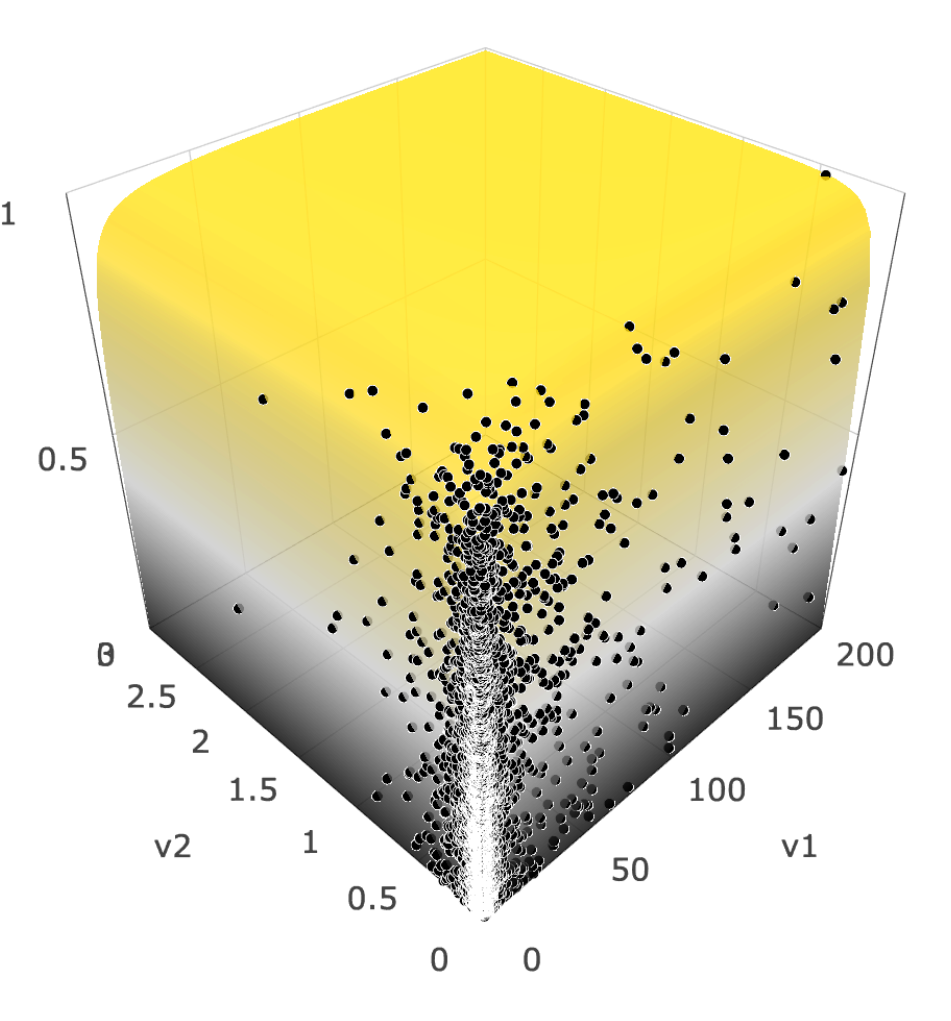}}
    \subfloat[][\emph{$k=1000$.}]
    {\includegraphics[trim={5cm 1cm 4cm 2cm}, clip,width=.32\linewidth]{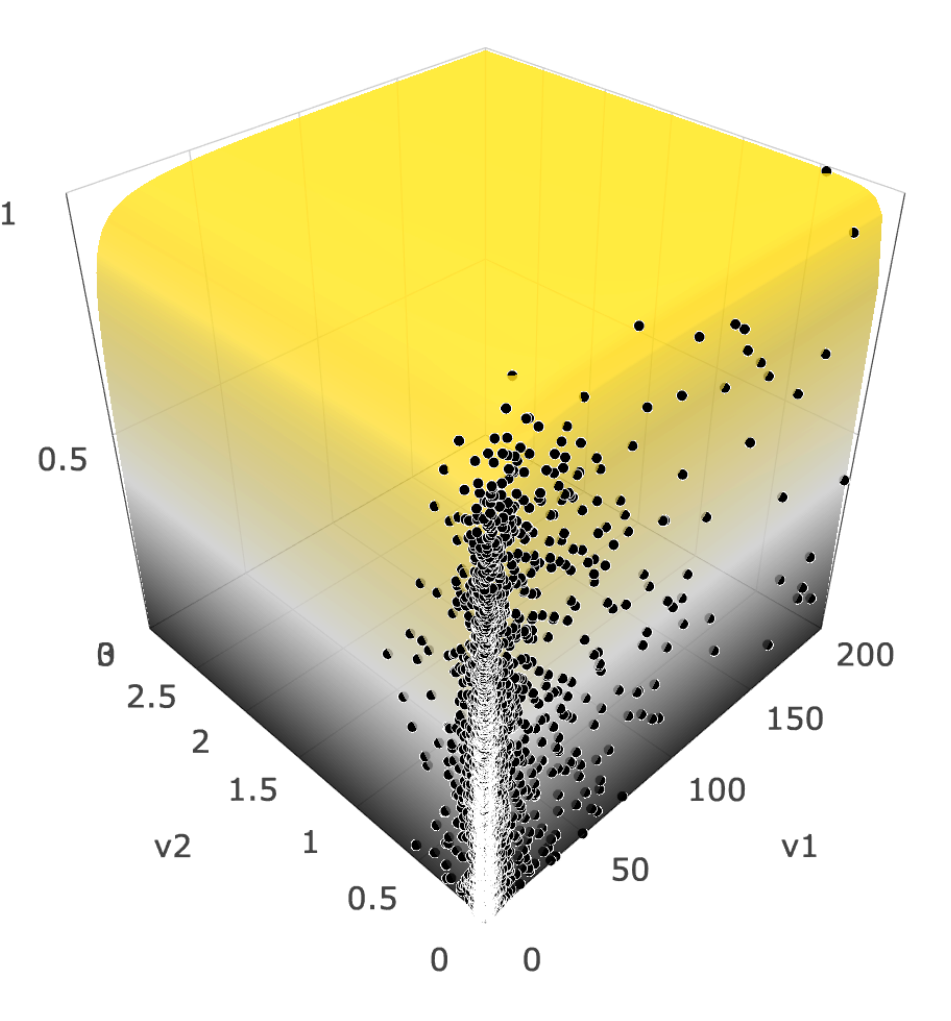}}
    \subfloat[][\emph{$k=2000$.}]
    {\includegraphics[trim={5cm 1cm 4cm 2cm}, clip,width=.32\linewidth]{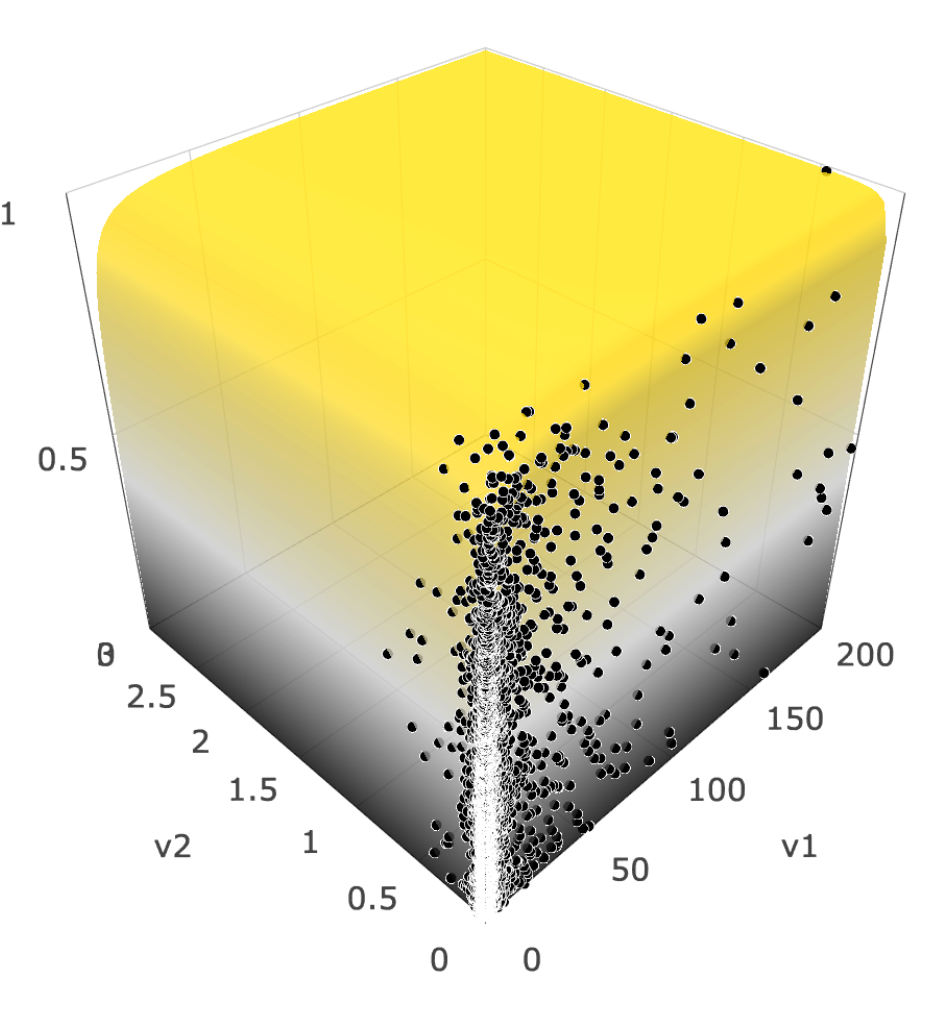}}\\
    \subfloat[][\emph{Boxplots of the errors.}\label{subfig: Logistic boxplots}]
    {\includegraphics[trim={0 6cm 0 6cm}, clip,width=\linewidth]{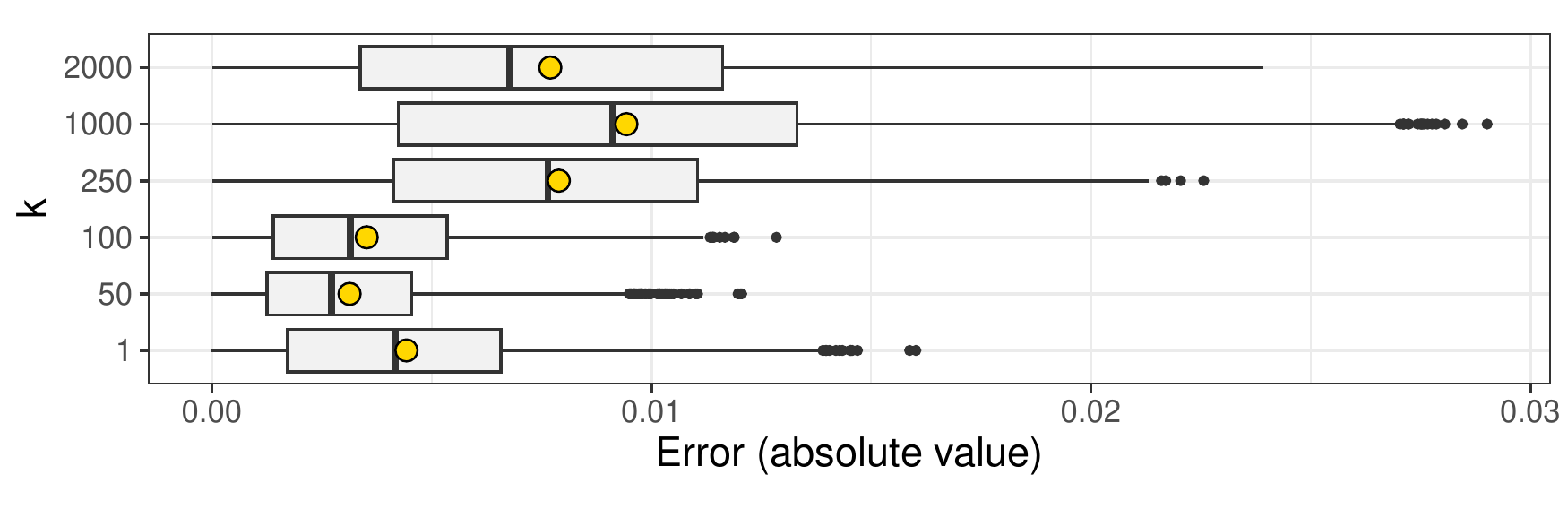}}
    \caption{Logistic$(\gamma = 1/2)$. Bivariate cdf~computed by numerically integrating equation \eqref{eq: Joint_V1V2}, for various values of $k$. The black bullets correspond to the bivariate empirical cdf of the simulated samples. Panel g) shows the boxplots of the errors in absolute value between the bivariate asymptotic cdf computed in $(V_{1,i},V_{2,i})$ for $i=1,\dots,5000$ and the corresponding empirical bivariate cdf. The gold dots represent the error means.}\label{fig: Logistic surfaces}
\end{figure}
\begin{figure}[h]
    \centering
    \subfloat[][\emph{$V_1$: log marginal cdf comparison and marginal error behaviour with respect to $v_1$.}\label{fig: Logistic marginals_V1}]
    {\includegraphics[trim={0cm 4cm 0cm 1cm}, clip,width=\linewidth]{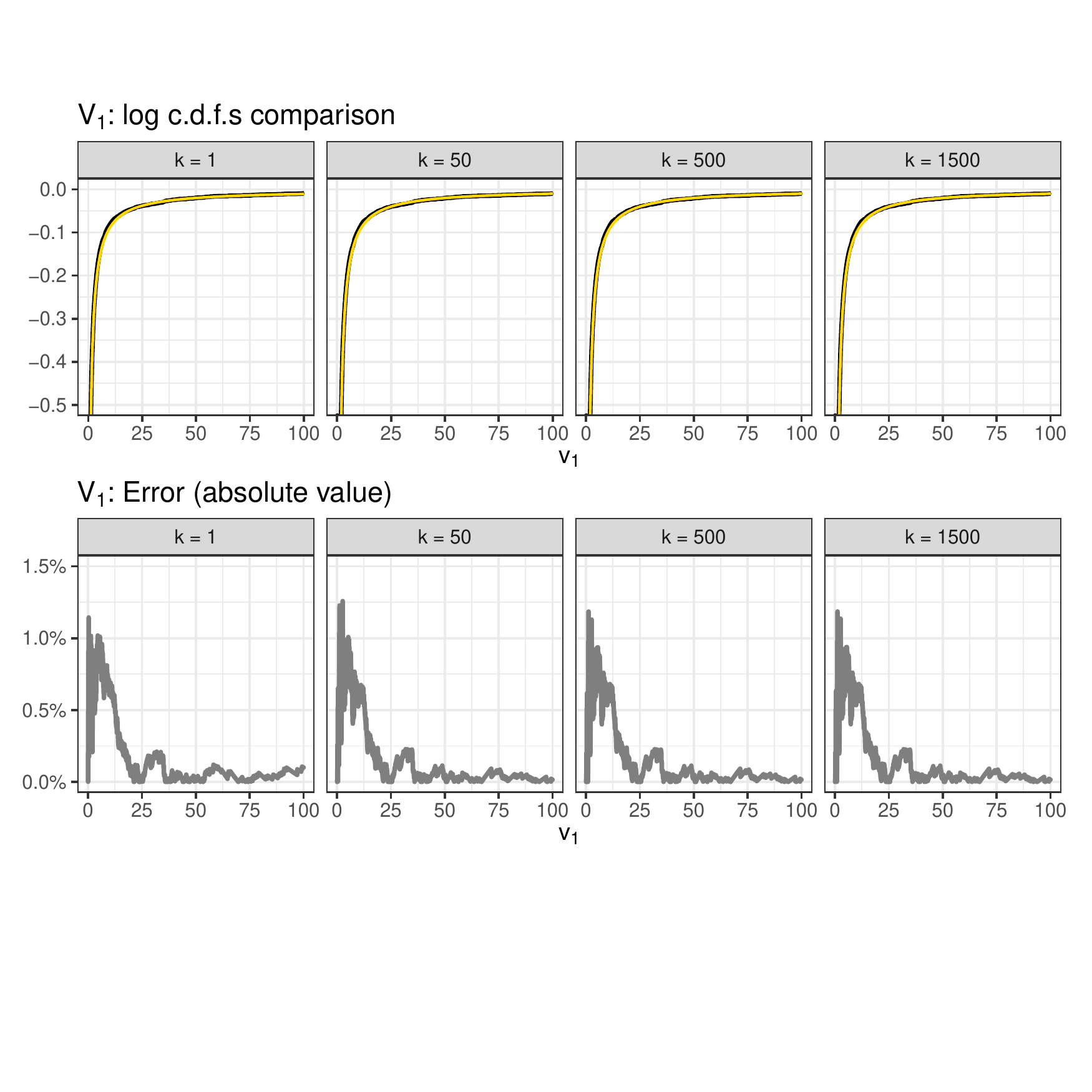}} \\
    \subfloat[][\emph{$V_2$: log marginal cdf comparison and marginal error behaviour with respect to $v_2$.}\label{fig: Logistic marginals_V2}]
    {\includegraphics[trim={0cm 4cm 0cm 1cm}, clip,width=\linewidth]{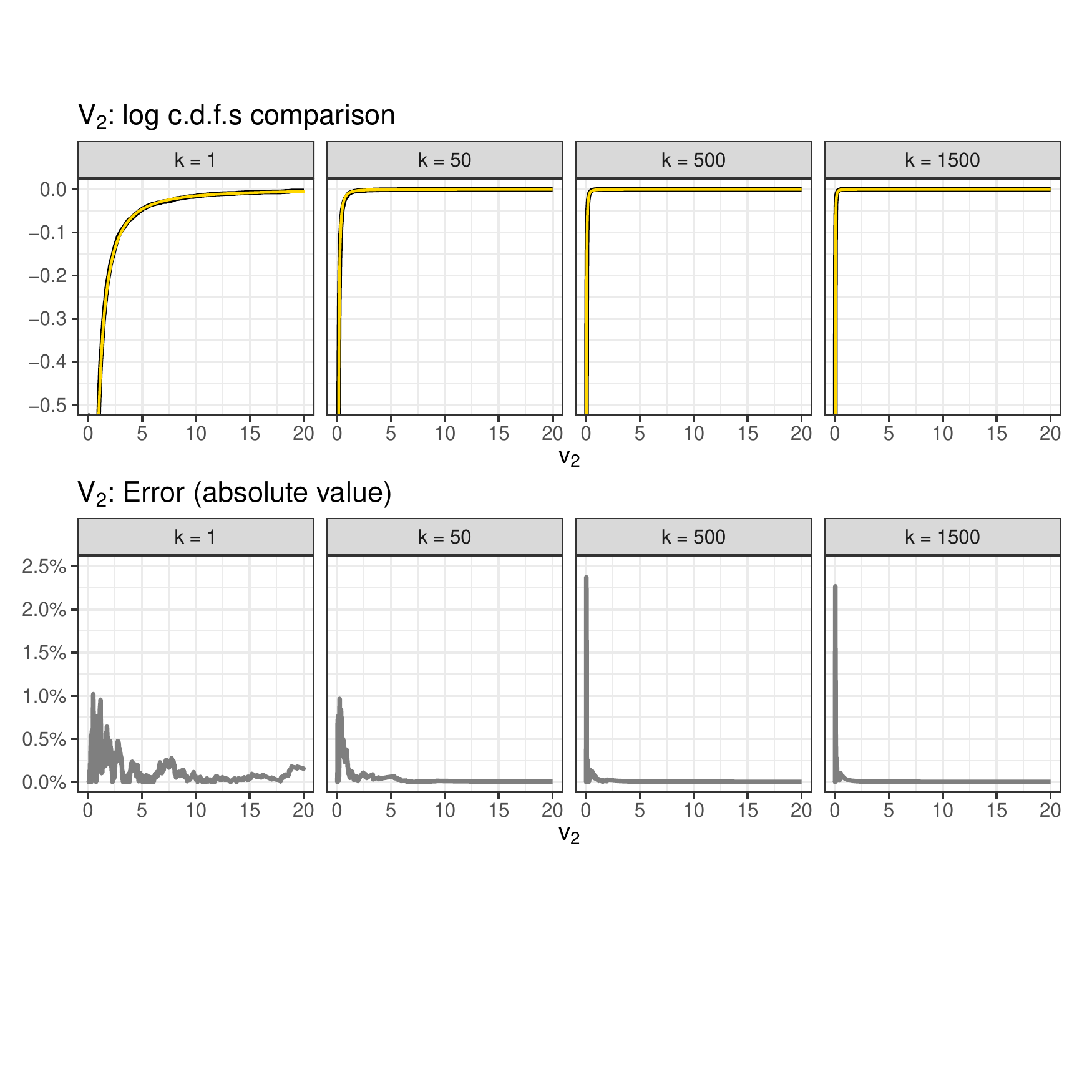}}

    \caption{Logistic$(\gamma = 1/2)$ example. Empirical univariate cdf of $V_1$ and $V_2$, together with the asymptotic theoretical values obtained by numerically  integrating the asymptotic joint cdf~\eqref{eq: Joint_V1V2}.}\label{fig: Logistic marginals_V1_V2}
\end{figure}
\clearpage

\section{Appendix - The Von Mises conditions}\label{app:def}
This section reports the Von Mises conditions, as written in \cite{Resnick1987}, borrowing his notation.
%
\\[1ex]

\textit{Proposition 1.15 (\cite{Resnick1987}): 
    Suppose $F$ is absolutely continuous with positive density $f$ in some neighborhood of $\infty$.
    \begin{enumerate}[(\alph*)]
        \item If for some $\alpha>0$
        \begin{equation}\label{eq: Resnick_1.19}
            \lim_{x \rightarrow \infty} x f(x) /(1-F(x))=\alpha
        \end{equation}
        then $F \in \mathscr{D}\left(\Phi_\alpha\right)$. We may choose $a_n$ to satisfy $a_n f\left(a_n\right) \sim \alpha / n$.
        \item If $f$ is nonincreasing and $F \in \mathscr{D}\left(\Phi_\alpha\right)$ then \eqref{eq: Resnick_1.19} holds.
        \item Equation \eqref{eq: Resnick_1.19} holds iff for some $z_0$ and all $x>z_0$, we have
        \[
        1-F(x)=c \exp \left\{-\int_{z_0}^x t^{-1} \alpha(t) \dd{t}\right\}
        \]
        where $\lim _{t \rightarrow \infty} \alpha(t)=\alpha$ and $c$ is a positive constant.\\[1ex]
    \end{enumerate}
}%
\textit{Proposition 1.16 (\cite{Resnick1987}): 
Suppose $F$ has finite right end-point $x_0$ and is absolutely continuous in a left neighborhood of $x_0$ with positive density $f$.
    \begin{enumerate}[(\alph*)]
        \item If for some $\alpha>0$
        \begin{equation}\label{eq: Resnick_1.20}
            \lim _{x \uparrow x_0}\left(x_0-x\right) f(x) /(1-F(x))=\alpha
        \end{equation}
        then $F \in \mathscr{D}\left(\Psi_\alpha\right)$.
        \item If $f$ is nonincreasing and $F \in \mathscr{D}\left(\Psi_\alpha\right)$ then \eqref{eq: Resnick_1.20} holds.
        \item Equation \eqref{eq: Resnick_1.20} holds iff $c(x)$ can be taken to be constant in some left neighborhood of $x_0$ in the representation
        \[
                1-F(x)=c(x) \exp \left\{-\int_{x_0-1}^x \delta(t) /\left(x_0-t\right)\dd{t} \right\}\quad \text { and for } x<x_0 
        \]
        where $\lim_{t \uparrow x_0} \delta(t)=\alpha, \, \lim_{t \uparrow x_0} c(t)=c_0>0$.\\[1ex]
    \end{enumerate}
}%
\textit{Proposition 1.17 (\cite{Resnick1987}): 
Let $F$ be absolutely continuous in a left neighborhood of $x_0$ with density $f$.
    \begin{enumerate}[(\alph*)]
    \item If
        \begin{equation}\label{eq: Resnick_1.21}
            \lim _{t \uparrow x_0} f(x) \int_x^{x_0}(1-F(t)) \dd{t} /(1-F(x))^2=1
        \end{equation}
        then $F \in D(\Lambda)$. In this case we may take
        \begin{align*}
        f(t) & =\int_t^{x_0}(1-F(s)) \dd{s} /(1-F(x)) \\
        b_n & =(1 /(1-F))^{\leftarrow}(n), \quad a_n=f\left(b_n\right) .
        \end{align*}
    \item If $f$ is nonincreasing and $F \in \mathscr{D}(\Lambda)$ then  holds.
    \item Equation \eqref{eq: Resnick_1.21} holds  iff
        \begin{equation}\label{eq: Resnick_1.23}
        1-F(x)=c \exp \left\{-\int_{z_0}^x(g(t) / f(t)) \dd{t}\right\}, \quad z_0<x<x_0,
        \end{equation}
        where $\lim_{t \uparrow_0} g(x)=1$ and $f$ is absolutely continuous with density $f(x) \rightarrow 0$ as $x \uparrow x_0$.
    \item Equation \eqref{eq: Resnick_1.21} or \eqref{eq: Resnick_1.23} are equivalent to $t f\left((1 /(1-F))^{\leftarrow}(t)\right) \in {RV}_0$ (\cite{Sweeting1985}).
    \end{enumerate}
}
\end{appendix}
\begin{acks}[Acknowledgments]
This paper was mainly written during the first author's visiting period at the ESSEC Business School, Paris, France and during the second author's visiting period at the ESOMAS Department, University of Turin, Turin, Italy. Both authors are grateful to  the hosting institutions.
\end{acks}
\begin{funding}
The authors are grateful for funding to Labex MME DII (ANR-11-LABX-0023-01) and ESOMAS department (University of Turin).
\end{funding}

\bibliographystyle{imsart-nameyear} 
\bibliography{AAP_Dep_project.bib}       

\end{document}